\newtheorem{dummytheorem}{Dummy-Theorem}[section]
\newcommand{\proofendsign}{$\Box$} % \rule{2mm}{2mm}
\newtheorem{definition}[dummytheorem]{Definition}
\newtheorem{lemma}[dummytheorem]{Lemma}
\newtheorem{theorem}[dummytheorem]{Theorem}
\newtheorem{proposition}[dummytheorem]{Proposition}
\newenvironment{proof}{{\noindent \bf Proof }}
 {{\hspace*{\fill}\proofendsign\par\bigskip}}
\newtheorem{remarknorm}[dummytheorem]{Remark}
\newtheorem{examplenorm}[dummytheorem]{Example}
\newcommand{\U}{\mathbb{U}}
\newcommand{\V}{\mathbf{V}}
\newcommand{\bE}{\mathbf{E}}
\newcommand{\N}{\mathbb{N}}
\newcommand{\Z}{\mathbb{Z}}
\newcommand{\R}{\mathbb{R}}
\newcommand{\E}{\mathbf{E}}
\newcommand{\F}{\mathbf{F}}
\newcommand{\D}{\mathbf{D}}
\newcommand{\bL}{\mathbf{L}}
\newcommand{\bB}{\mathbf{B}}
\newcommand{\bG}{\mathbf{G}}
\newcommand{\pr}{\mathbb{P}}
\newcommand{\ex}{\mathbb{E}}
\newcommand{\vari}{\mathbb{V}{\rm ar}}
\newcommand{\covi}{\mathbb{C}{\rm ov}}
\newcommand{\eins}{\mathbbm{1}}
\newcommand{\vatr}{{\rm V@R}}
\newcommand{\avatr}{{\rm AV@R}}
\newcommand{\argmin}{{\rm argmin}}
\newcommand{\OFP}{(\Omega,{\cal F},\pr)}
\newcommand{\cB}{{\cal B}}
\newcommand{\cF}{{\cal F}}
\newcommand{\cM}{{\cal M}}
\newcommand{\cR}{{\cal R}}
\newcommand{\cadlag}{c\`adl\`ag}
\newcommand{\cI}{{\cal I}}
\begin{document}

%%%%%%%%%%%%%%%%%%%%%%%%%%%%%%%%%%%%%%%%%%%%%%%%%%%%%%%%%%%%%%%%
%%%%%%%%%%%%%%%%%%%%%%%%%%%%%%%%%%%%%%%%%%%%%%%%%%%%%%%%%%%%%%%%
%%%%%%%%%%%%%%%%%%%%%%%%%%%%%%%%%%%%%%%%%%%%%%%%%%%%%%%%%%%%%%%%

\title{Statistical inference for expectile-based risk measures}

\author{
Volker Kr\"atschmer\footnote{Faculty of Mathematics, University of Duisburg--Essen; {volker.kraetschmer@uni-due.de}}
\qquad\qquad
Henryk Zähle\footnote{Department of Mathematics, Saarland University; {zaehle@math.uni-sb.de}}}
\date{\small ~}

\maketitle

\begin{abstract}
Expectiles were introduced by Newey and Powell \cite{NeweyPowell1987} in the context of linear regression models. Recently, Bellini et al.~\cite{Bellinietal2014} revealed that expectiles can also be seen as reasonable law-invariant risk measures. In this article, we show that the corresponding statistical functionals are continuous w.r.t.\ the $1$-weak topology and suitably functionally differentiable. By means of these regularity results we can derive several properties such as consistency, asymptotic normality, bootstrap consistency, and qualitative robustness of the corresponding estimators in nonparametric and parametric statistical models.
\end{abstract}

{\bf Keywords:} Expectile-based risk measure; $1$-weak continuity; Quasi-Hadamard differentiability; Statistical estimation; Weak dependence; Strong consistency; Asymptotic normality; Bootstrap consistency; Qualitative robustness; Functional delta-method

%%%%%%%%%%%%%%%%%%%%%%%%%%%%%%%%%%%%%%%%%%%%%%%%%%%%%%%%%%%%%%%%
%%%%%%%%%%%%%%%%%%%%%%%%%%%%%%%%%%%%%%%%%%%%%%%%%%%%%%%%%%%%%%%%
%%%%%%%%%%%%%%%%%%%%%%%%%%%%%%%%%%%%%%%%%%%%%%%%%%%%%%%%%%%%%%%%

% \setlength{\parindent}{0mm}
% \renewcommand{\baselinestretch}{1.2}

%%%%%%%%%%%%%%%%%%%%%%%%%%%%%%%%%%%%%%%%%%%%%%%%%%%%%%%%%%%%%%%%
%%%%%%%%%%%%%%%%%%%%%%%%%%%%%%%%%%%%%%%%%%%%%%%%%%%%%%%%%%%%%%%%
%%%%%%%%%%%%%%%%%%%%%%%%%%%%%%%%%%%%%%%%%%%%%%%%%%%%%%%%%%%%%%%%
%%%%%%%%%%%%%%%%%%%%%%%%%%%%%%%%%%%%%%%%%%%%%%%%%%%%%%%%%%%%%%%%
%%%%%%%%%%%%%%%%%%%%%%%%%%%%%%%%%%%%%%%%%%%%%%%%%%%%%%%%%%%%%%%%
%%%%%%%%%%%%%%%%%%%%%%%%%%%%%%%%%%%%%%%%%%%%%%%%%%%%%%%%%%%%%%%%

\newpage

\section{Introduction}\label{Introduction}

Let $(\Omega,{\cal F},\pr)$ be an atomless probability space and use $L^p=L^p(\Omega,{\cal F},\pr)$ to denote the usual $L^p$-space. The $\alpha$-expectile of $X\in L^2$, with $\alpha\in(0,1)$, can uniquely be defined by
\begin{eqnarray}
    \rho_\alpha(X)
    & := & \argmin_{m\in\R}\,\big\{\alpha\,\ex\big[\big((X - m)^{+}\big)^{2}\big]+(1 - \alpha)\,\ex\big[\big((m - X)^{+}\big)^{2}\big]\big\} \nonumber\\
    & = & \argmin_{m\in\R}\,\ex[V_\alpha(X-m)] \label{def expectiles-based shortfall risk measure - alternative}
\end{eqnarray}
(see Proposition 1 and Example 4 in \cite{Bellinietal2014}), where
$$
    V_\alpha(x)
    :=
    \left\{
    \begin{array}{rll}
        \alpha\,x^2 & , & x\ge 0\\
        (1-\alpha)\,x^2 & , & x<0
    \end{array}
    \right.
    ,\qquad x\in\R.
$$
Expectiles were introduced by Newey and Powell \cite{NeweyPowell1987} in the context of linear regression models. On the one hand, (\ref{def expectiles-based shortfall risk measure - alternative}) generalizes the {\em expec}tation of $X$ which coincides with $\rho_\alpha(X)$ when specifically $\alpha=1/2$. On the other hand, (\ref{def expectiles-based shortfall risk measure - alternative}) is similar to the $\alpha$-quan{\em tile} of $X$ which can be obtained by replacing $x^2$ by $|x|$ in the definition of $V_\alpha$. This motivates the name {\em $\alpha$-expectile}.

For every $X\in L^2$ the mapping $m\mapsto\ex[V_\alpha(X-m)]$ is convex and differentiable with derivative given by $m\mapsto -2\U_\alpha(X)(m)$, where
\begin{equation}\label{Def Mapping bb U alpha}
    \mathbb{U}_\alpha(X)(m):=\ex[U_\alpha(X-m)],\qquad m\in\R
\end{equation}
with
$$
    U_\alpha(x)
    :=
    \left\{
    \begin{array}{rll}
        \alpha\,x & , & x\ge 0\\
        (1-\alpha)\,x  & , & x<0
    \end{array}
    \right.
    ,\qquad x\in\R.
$$
Moreover, for $X\in L^1$ the mapping $m\mapsto\U_\alpha(X)(m)$ is well defined and bijective; cf.\ Lemma \ref{representation int by parts - mathbb} (Appendix \ref{expectiles as risk measures}). These observations together imply that for $X\in L^2$ the $\alpha$-expectile admits the representation
\begin{equation}\label{def expectiles-based shortfall risk measure}
    \rho_\alpha(X) = \mathbb{U}_\alpha(X)^{-1}(0),
\end{equation}
where $\mathbb{U}_\alpha(X)^{-1}$ denotes the inverse function of $\mathbb{U}_\alpha(X)$. In particular, (\ref{def expectiles-based shortfall risk measure}) can be used to define a map $\rho_\alpha:L^1\to\R$ which is compatible with (\ref{def expectiles-based shortfall risk measure - alternative}). For every $X\in L^1$ the value in (\ref{def expectiles-based shortfall risk measure}) will be called the corresponding $\alpha$-expectile.

Recently, Bellini~et~al.~\cite{Bellinietal2014} revealed that expectiles can be also seen as reasonable risk measures when $1/2\le\alpha<1$. In Proposition 6 in \cite{Bellinietal2014}, they prove that the map $\rho_\alpha:L^2\to\R$ provides a coherent risk measure if (and only if) $1/2\le\alpha<1$. Recall that a map $\rho:{\cal X}\to\R$, with ${\cal X}$ a subspace of $L^0$, is said to be a coherent risk measure if it is
\begin{itemize}
    \item monotone: $\rho(X_1)\le\rho(X_2)$ for all $X_1,X_2\in{\cal X}$ with $X_1\le X_2$,
    \item cash-invariant: $\rho(X+m)=\rho(X)+m$ for all $X\in{\cal X}$ and $m\in\R$,
    \item subadditive: $\rho(X_1+X_2)\le\rho(X_1)+\rho(X_2)$ for all $X_1,X_2\in{\cal X}$,
    \item positively homogenous: $\rho(\lambda X)=\lambda\,\rho(X)$ for all $X\in{\cal X}$ and $\lambda\ge 0$.
\end{itemize}
It is shown in the Appendix \ref{expectiles as risk measures} (Proposition \ref{Expectiles as coherent RM}) that even the map $\rho_\alpha:L^1\to\R$ provides a coherent risk measure if (and only if) $1/2\le\alpha<1$. For $0<\alpha<1/2$ the map $\rho_\alpha:L^1\to\R$ is at least monotone, cash-invariant, and positively homogeneous. For this reason we will henceforth refer to $\rho_\alpha:L^1\to\R$ as {\em expectile-based risk measure} at level $\alpha\in(0,1)$. It is worth mentioning that $\rho_\alpha$ already appeared implicitly in an earlier paper by Weber \cite{Weber2006}. As Ziegel \cite{Ziegel2014} pointed out that $\rho_\alpha$ satisfies a particularly desirable property of risk measures in the context of backtesting, $\rho_\alpha$ attracted special attention in the field of monetary risk measurement in the last few years \cite{AcerbiSzekely2014,BelliniDiBernardino2015,Bellinietal2014,Delbaen2013,Emmeretal2015,Ziegel2014}. For pros and cons of expectile-based risk measures and of other standard risk measures see, for instance, the discussions by Acerbi and Szekely \cite{AcerbiSzekely2014}, Bellini and Di Bernardino \cite{BelliniDiBernardino2015}, and Emmer et al.\ \cite{Emmeretal2015}.

%{\color{cyan}Tabelle habe ich weggelassen, w\"are m.E. nur eine Fu{\ss}notenbemerkung}
%Among other things these authors obtained the following tableau, where $\vatr_\alpha$ and $\avatr_\alpha$ refer to the Value at Risk at level $\alpha$ and the Average Value at Risk at level $\alpha$ respectively: {\color{brown}Das Tableau kann auch wieder raus, wenn Du denkst, dass es überflüssig ist. Vielleicht rechtfertigt dieses aber ein wenig das Verwenden von ``risk measures'' im Titel.}

%\begin{center}
%\begin{tabular}{lccc}
%    & $\vatr_\alpha$ & $\avatr_\alpha$ & $\rho_\alpha$ \\
%    \hline%\vspace{1mm}
%    coherence & $\times$ & $\checkmark$ & $\checkmark$\\
%   robustness & \textcolor{hgreen}{$\checkmark$} & \textcolor{hgreen}{($\checkmark$)} & \textcolor{hgreen}{($\checkmark$)}\\
%    elicitability & $\checkmark$ & $\times$ & $\checkmark$\\
%    comonotonic additivity & $\checkmark$ & $\checkmark$ & $\times$\\
%    \vspace{1mm}
%    & \scriptsize{for $\alpha\in(0,1)$} & \scriptsize{for $\alpha\in(0,1)$} & \scriptsize{for $\alpha\in[\frac{1}{2},1)$}
%\end{tabular}
%\end{center}
%}

This article is concerned with the statistical estimation of expectile-based risk measures. The goal is the estimation of $\rho_\alpha(X)$ for some $X\in L^1$ with unknown distribution function $F$. Let $\F_1$ be the class of all distribution functions on $\R$ satisfying $\int|x|\,dF(x)<\infty$. Note that $\F_1$ coincides with the set of the distribution functions of all elements of $L^1$, because the underlying probability space was assumed to be atomless. Also note that $F\in\F_{1}$ if and only if $\int_{-\infty}^{0}F(x)\,dx<\infty$ and $\int_{0}^{\infty}(1 - F(x))\,dx < \infty$ hold. Since $\rho_\alpha$ is law-invariant (i.e.\ $\rho_\alpha(X_1)=\rho_\alpha(X_2)$ when $\pr\circ{X_1}^{-1}=\pr\circ{X_2}^{-1}$), we may associate with $\rho_\alpha$ a statistical functional ${\cal R}_\alpha:\F_1\rightarrow\R$ via
\begin{equation}\label{definition of risk functional}
    {\cal R}_\alpha(F_X)\,:=\,\rho_\alpha(X),\qquad X\in L^1,
\end{equation}
where $F_X$ denotes the distribution function of $X$. That is,
\begin{equation}\label{def risk functional}
    {\cal R}_\alpha(F)\,=\,{\cal U}_\alpha(F)^{-1}(0) \qquad\mbox{for all $F\in\F_1$},
\end{equation}
where
\begin{equation}\label{Def Mapping cal U alpha}
    {\cal U}_\alpha(F)(m)\,:=\,\int U_\alpha(x-m)\,dF(x),\qquad m\in\R.
\end{equation}
Then, if $\widehat F_n$ is a reasonable estimator for $F$, %based on $X_1,\ldots,X_n$,
the plug-in estimator ${\cal R}_\alpha(\widehat F_n)$ is typically a reasonable estimator for $\rho_\alpha(X)={\cal R}_\alpha(F)$.

In a nonparametric framework, a canonical example for $\widehat F_n$ is the empirical distribution function
\begin{equation}\label{def edf}
    \widehat F_n\,:=\,\frac{1}{n}\sum_{i=1}^{n}\eins_{[X_i,\infty)}
\end{equation}
of $n$ identically distributed random variables $X_1,\ldots,X_n$ drawn according to $F$. In this case we have
\begin{equation}\label{def z estimator}
    {\cal R}_\alpha(\widehat F_n)\,=\,{\cal U}_\alpha(\widehat F_n)^{-1}(0)\,=\,\mbox{unique solution in $m$ of }\sum_{i=1}^nU_\alpha(X_i-m)= 0.
\end{equation}
That is, the plug-in estimator is nothing but a simple Z-estimator (M-estimator). For Z-estimators (M-estimators) there are several results concerning consistency and the asymptotic distribution in the literature. A classical reference is Huber's seminal paper \cite{Huber1967}; see also standard textbooks as \cite{HuberRonchetti2009,Serfling1980,van der Vaart 1998,van der Vaart Wellner 1996}. Recently Holzmann and Klar \cite{HolzmannKlar2015} used results of Arcones \cite{Arcones2000} and Van der Vaart \cite{van der Vaart 1998} to derive asymptotic properties of the Z-estimator in (\ref{def z estimator}). They restricted their attention to i.i.d.\ observations but allowed for observations without finite second moment.

On the other hand, even in the nonparametric setting the estimator $\widehat F_n$ may differ from the empirical distribution function so that the plug-in estimator need not be a $Z$-estimator. See, for instance, Section 3 in \cite{BeutnerZaehle2010} for estimators $\widehat F_n$ being different from the empirical distribution function. Also, in a parametric setting the estimator $\widehat F_n$ will hardly be the empirical distribution function. For these reasons, we will consider a suitable linearization of the functional ${\cal R}_\alpha$ in order to be in the position to derive several asymptotic properties of the plug-in estimator ${\cal R}_\alpha(\widehat F_n)$ in as many as possible situations.

Linearizations of Z-functionals have been considered before, for instance, by Clarke \cite{Clarke1983,Clarke1986}. However, these results do not cover the particular Z-functional ${\cal R}_\alpha$, because the function $U_\alpha$ is unbounded. By using the concept of {\em quasi}\,-Hadamard differentiability as well as the corresponding functional delta-method introduced by Beutner and Z\"ahle \cite{BeutnerZaehle2010,BeutnerZaehle2015} we will overcome the difficulties with the unboundedness of $U_\alpha$. Quasi-Hadamard differentiability of the functional ${\cal R}_\alpha$ will in particular admit some bootstrap results for the plug-in estimator ${\cal R}_\alpha(\widehat F_n)$ when $\widehat F_n$ is the empirical distribution function of $X_1,\ldots,X_n$.

It is worth mentioning that Heesterman and Gill \cite{HeestermanGill1992} also considered a linearization approach to $Z$-estimators. However (boiled down to our setting) they did not consider a linearization of the functional ${\cal R}_\alpha$ (to be evaluated at the estimator $\widehat F_n$ of $F$) but only of the functional that provides the unique zero of a strictly decreasing and continuous function tending to $\pm\infty$ as its argument tends to $\mp\infty$ (as the function ${\cal U}_\alpha(\widehat F_n)$). To some extent this approach is less flexible than our approach. Especially parametric estimators cannot be handled by this approach without further ado.

The rest of this article is organized as follows. In Section \ref{section regularity} we will establish a certain continuity and the above-mentioned differentiability of the functional ${\cal R}_\alpha$. In Sections \ref{NonParamEstim}--\ref{ParamEstim} we will apply the results of Section \ref{section regularity} to the nonparametric and parametric estimation of ${\cal R}_\alpha(F)$. In Section \ref{proof of main theorem} we will prove the main result of Section \ref{section regularity}, and in Section \ref{Sec Remaining proofs} we will verify two examples and a lemma  presented in Sections \ref{NonParamEstim}--\ref{ParamEstim}. The Appendix provides some auxiliary results. In particular, in Section \ref{appendix QHD and FDM} of the Appendix we formulate a slight generalization of the functional delta-method in the form of Beutner and Z\"ahle \cite{BeutnerZaehle2015}.

%%%%%%%%%%%%%%%%%%%%%%%%%%%%%%%%%%%%%%%%%%%%%%%%%%%%%%%%%%%%%%%%
%%%%%%%%%%%%%%%%%%%%%%%%%%%%%%%%%%%%%%%%%%%%%%%%%%%%%%%%%%%%%%%%
%%%%%%%%%%%%%%%%%%%%%%%%%%%%%%%%%%%%%%%%%%%%%%%%%%%%%%%%%%%%%%%%
%%%%%%%%%%%%%%%%%%%%%%%%%%%%%%%%%%%%%%%%%%%%%%%%%%%%%%%%%%%%%%%%
%%%%%%%%%%%%%%%%%%%%%%%%%%%%%%%%%%%%%%%%%%%%%%%%%%%%%%%%%%%%%%%%
%%%%%%%%%%%%%%%%%%%%%%%%%%%%%%%%%%%%%%%%%%%%%%%%%%%%%%%%%%%%%%%%

\section{Regularity of the functional ${\cal R}_\alpha$}\label{section regularity}

In this section we investigate the functional ${\cal R}_\alpha:\F_1\to\R$ defined in (\ref{def risk functional}) for continuity and differentiability. We equip $\F_1$ with the $1$-weak topology. This topology is defined to be the coarsest topology for which the mappings $\mu\mapsto\int f\,dF$, $f\in {\cal C}_{1}$, are continuous, where ${\cal C}_{1}$ is the set of all continuous functions $f:\R\to\R$ with $|f(x)|\le C_f(1+|x|)$ for all $x\in\R$ and some finite constant $C_f>0$. A sequence $(F_n)\subseteq\F_1$ converges $1$-weakly to some $F_0\in\F_1$ if and only if $\int f\,dF_n\to\int f\,dF_0$ for all $f\in{\cal C}_1$; cf.\ Lemma 3.4 in \cite{Kraetschmeretal2012}.  The set $\F_1$ can obviously be identified with the set of all Borel probability measures $\mu$ on $\R$ satisfying $\int|x|\,\mu(dx)<\infty$. In this context the $1$-weak topology is sometimes referred to as $\psi_1$-weak topology; see, for instance, \cite{Kraetschmeretal2014}. But for our purposes it is more convenient to work with the $\F_1$-terminology.

Let $\bL_0$ be the space of all Borel measurable functions $v:\R\rightarrow\R$ modulo the equivalence relation of $\ell$-almost sure identity. Note that $\F_1\subseteq\bL_0$, and let $\bL_1\subseteq\bL_0$ be the subspace of all $v\in\bL_0$ for which
\begin{equation}\label{Def Wasserstein norm}
    \|v\|_{1,\ell}:=\int|v(x)|\,\ell(dx)
\end{equation}
is finite. Here, and henceforth, $\ell$ stands for the Borel Lebesgue measure on $\R$. Note that $F_{1} - F_{2}\in \bL_{1}$ for $F_{1}, F_{2}\in \F_{1}$. It is well-known that $\|\cdot\|_{1,\ell}:\bL_1\rightarrow\R_+$ provides a complete and separable norm on $\bL_1$ and that
$$
    d_{\mbox{\scriptsize{\rm W}},1}(F_1,F_2):=\|F_1-F_2\|_{1,\ell}
$$
defines the Wasserstein-$1$ metric $d_{\mbox{\scriptsize{\rm W}},1}:\F_1\times\F_1\rightarrow\R_+$ on $\F_1$. Also note that $d_{\mbox{\scriptsize{\rm W}},1}$ metrizes the $1$-weak topology on $\F_1$; cf.\ Remark 2.9 in \cite{Kraetschmeretal2014}.

%%%%%%%%%%%%%%%%%%%%%%%%%%%%%%%%%%%%%%%%%%%%%%%%%%%%%%%%%%%%%%%%
%%%%%%%%%%%%%%%%%%%%%%%%%%%%%%%%%%%%%%%%%%%%%%%%%%%%%%%%%%%%%%%%
%%%%%%%%%%%%%%%%%%%%%%%%%%%%%%%%%%%%%%%%%%%%%%%%%%%%%%%%%%%%%%%%

\subsection{Continuity}\label{section regularity - continuity}

Since the Wasserstein-$1$ metric $d_{\mbox{\scriptsize{\rm W}},1}$ metrizes the $1$-weak topology on $\F_1$, the following theorem is an immediate consequence of a recent result by Bellini et al.~\cite[Theorem 10]{Bellinietal2014}.

\begin{theorem}\label{1 weak continuity of R alpha}
The functional ${\cal R}_\alpha:\F_1\to\R$ is continuous for the $1$-weak topology.
\end{theorem}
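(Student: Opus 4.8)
As the paragraph preceding the statement indicates, the quickest route is to combine the cited metrization fact with \cite[Theorem 10]{Bellinietal2014}, which provides continuity of the expectile in the Wasserstein-$1$ metric. The plan is therefore first to reduce the asserted $1$-weak continuity to continuity with respect to $d_{\mbox{\scriptsize{\rm W}},1}$ --- legitimate because $d_{\mbox{\scriptsize{\rm W}},1}$ metrizes the $1$-weak topology on $\F_1$, so in particular $\F_1$ is metrizable and continuity of the real-valued functional ${\cal R}_\alpha$ may be checked along sequences. Thus it suffices to take an arbitrary sequence $(F_n)\subseteq\F_1$ converging $1$-weakly to some $F_0\in\F_1$ and to show ${\cal R}_\alpha(F_n)\to{\cal R}_\alpha(F_0)$.

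To keep the argument self-contained (and to see precisely why continuity holds) I would not merely quote Theorem 10 but exploit the $Z$-functional structure recorded in (\ref{def risk functional})--(\ref{Def Mapping cal U alpha}). The first key step is the observation that for each fixed $m\in\R$ the integrand $x\mapsto U_\alpha(x-m)$ is continuous and of at most linear growth, since $|U_\alpha(x-m)|\le\max\{\alpha,1-\alpha\}\,|x-m|\le C_m(1+|x|)$; hence it belongs to ${\cal C}_{1}$. By the very definition of $1$-weak convergence this yields the pointwise convergence
\[
    {\cal U}_\alpha(F_n)(m)\,=\,\int U_\alpha(x-m)\,dF_n(x)\,\longrightarrow\,\int U_\alpha(x-m)\,dF_0(x)\,=\,{\cal U}_\alpha(F_0)(m),\qquad m\in\R.
\]
The second key step is that, by Lemma \ref{representation int by parts - mathbb}, each function ${\cal U}_\alpha(F)(\cdot)$ is continuous and strictly decreasing with limits $+\infty$ and $-\infty$ as $m\to-\infty$ and $m\to+\infty$, so that it possesses the unique zero ${\cal R}_\alpha(F)$.

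It then remains to pass from the pointwise convergence of these monotone functions to convergence of their zeros. Writing $m_0:={\cal R}_\alpha(F_0)$ and fixing $\varepsilon>0$, strict monotonicity of ${\cal U}_\alpha(F_0)(\cdot)$ gives ${\cal U}_\alpha(F_0)(m_0-\varepsilon)>0>{\cal U}_\alpha(F_0)(m_0+\varepsilon)$; evaluating the convergence above at the two points $m_0\pm\varepsilon$ shows that ${\cal U}_\alpha(F_n)(m_0-\varepsilon)>0>{\cal U}_\alpha(F_n)(m_0+\varepsilon)$ for all sufficiently large $n$, whereupon monotonicity of ${\cal U}_\alpha(F_n)(\cdot)$ traps its zero in $(m_0-\varepsilon,m_0+\varepsilon)$, i.e.\ $|{\cal R}_\alpha(F_n)-m_0|<\varepsilon$ eventually. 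As $\varepsilon$ was arbitrary, ${\cal R}_\alpha(F_n)\to{\cal R}_\alpha(F_0)$. I expect the only genuinely delicate point to be this last passage: only pointwise convergence of ${\cal U}_\alpha(F_n)(\cdot)$ is available (uniform convergence is neither at hand nor needed), and it is precisely the \emph{strict} monotonicity of the limit ${\cal U}_\alpha(F_0)(\cdot)$ near its zero that converts a two-point sign condition into control of the zero. In the purely citation-based route the corresponding subtlety is instead bookkeeping: verifying that the mode of convergence and the random-variable formulation underlying \cite[Theorem 10]{Bellinietal2014} match the distribution-function / $d_{\mbox{\scriptsize{\rm W}},1}$ setting used here.
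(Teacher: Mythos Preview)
Your proposal is correct. The opening paragraph already reproduces the paper's entire proof: the paper simply invokes \cite[Theorem 10]{Bellinietal2014} together with the fact that $d_{\mbox{\scriptsize{\rm W}},1}$ metrizes the $1$-weak topology, and then offers a second (still citation-based) route via \cite{CheriditoLi2009} and \cite{Kraetschmeretal2014} through the coherence of $\rho_\alpha$ (respectively $\check\rho_\alpha$).

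The self-contained argument you add in the remaining paragraphs is a genuinely different route. Rather than importing Wasserstein continuity of expectiles or the continuity theory for coherent risk measures, you exploit directly the $Z$-functional structure: $U_\alpha(\cdot-m)\in{\cal C}_1$ gives pointwise convergence of ${\cal U}_\alpha(F_n)$, and strict monotonicity of the limit (Lemma \ref{representation int by parts - mathbb}) turns a two-point sign check into convergence of the zeros. This is more elementary and fully internal to the paper's setup; the price is that it is longer and re-derives a special case of what \cite[Theorem 10]{Bellinietal2014} already provides. Your diagnosis of the only subtle step---that strict monotonicity of ${\cal U}_\alpha(F_0)$ at its zero is what makes pointwise convergence suffice---is exactly right.
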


Theorem \ref{1 weak continuity of R alpha} can also be obtained by combining Theorem 4.1 in \cite{CheriditoLi2009} with the Representation theorem 3.5 in \cite{Kraetschmeretal2014}. Indeed, these two theorems together imply that the risk functional associated with any law-invariant coherent risk measure on $L^1$ is $1$-weakly continuous. For $1/2\le\alpha<1$ the functional ${\cal R}_\alpha$ itself is derived from a law-invariant coherent risk measure (see Proposition \ref{Expectiles as coherent RM} in Appendix \ref{expectiles as risk measures}). So it is $1$-weakly continuous. For $0<\alpha<1/2$ the map $\check{\rho}_\alpha:L^1\rightarrow\R$ defined by $\check\rho_\alpha(X):=-\rho_\alpha(-X)$ provides a law-invariant coherent risk measure (cf.\ Proposition \ref{Expectiles as coherent RM} in
Appendix \ref{expectiles as risk measures}), so that the associated statistical functional $\check{{\cal R}}_{\alpha}: \F_{1}\mapsto\R$, $\check{{\cal R}}_\alpha(F)=-{\cal R}_\alpha(\check F)$, is $1$-weakly continuous. Here $\check F$ stands for the distribution function derived from $F$ via $\check F(x):=1-F((-x)-)$. Since for any sequence $(F_n)_{n\in\N_0}\subseteq\F_1$, $F_n\to F_0$ $1$-weakly if and only if $\check F_n\to \check F_0$ $1$-weakly, it follows that also the functional ${\cal R}_\alpha$ is $1$-weakly continuous.

By the $1$-weak continuity of ${\cal R}_\alpha$ we are in the position to easily derive strong consistency of the plug-in estimator ${\cal R}_\alpha(\widehat F_n)$ for ${\cal R}_\alpha(F)$ in several situations; see Sections \ref{section strong consistency} and \ref{section strong consistency - param}.

%%%%%%%%%%%%%%%%%%%%%%%%%%%%%%%%%%%%%%%%%%%%%%%%%%%%%%%%%%%%%%%%
%%%%%%%%%%%%%%%%%%%%%%%%%%%%%%%%%%%%%%%%%%%%%%%%%%%%%%%%%%%%%%%%
%%%%%%%%%%%%%%%%%%%%%%%%%%%%%%%%%%%%%%%%%%%%%%%%%%%%%%%%%%%%%%%%

\subsection{Differentiability}\label{section regularity - differentiability}

We will use the notion of quasi-Hadamard differentiability introduced in \cite{BeutnerZaehle2010,BeutnerZaehle2015}. Quasi-Hadamard differentiability is a slight (but useful) generalization of the conventional tangential Hadamard differentiability. The latter is commonly acknowledged to be a suitable notion of differentiability in the context of the functional delta-method (see e.g.\ the bottom of p.\,166 in \cite{HeestermanGill1992}), and it was shown in \cite{BeutnerZaehle2010,BeutnerZaehle2015} that the former is still strong enough to obtain a functional delta-method. Let $\bL_1$ be equipped with the norm $\|\cdot\|_{1,\ell}$. % defined in (\ref{Def Wasserstein norm}).

\begin{definition}\label{definition quasi hadamard - L 1}
Let ${\cal R}:\F_1\to\R$ be a map and $\bL_1^0$ be a subset of $\bL_1$. Then ${\cal R}$ is said to be quasi-Hadamard differentiable at $F\in\F_1$ tangentially to $\bL_1^0\langle\bL_1\rangle$ if there exists a continuous map $\dot{\cal R}_F:\bL_1^0\to\R$ such that
\begin{eqnarray}\label{def eq for HD}
    \lim_{n\to\infty}\Big|\dot{\cal R}_F(v)-\frac{{\cal R}(F+\varepsilon_nv_n)-{\cal R}(F)}{\varepsilon_n}\Big|\,=\,0
\end{eqnarray}
holds for each triplet $(v,(v_n),(\varepsilon_n))$, with $v\in\bL_1^0$, $(\varepsilon_n)\subseteq(0,\infty)$ satisfying $\varepsilon_n\to 0$, $(v_n)\subseteq\bL_1$ satisfying $\|v_n-v\|_{1,\ell}\to 0$ as well as $(F+\varepsilon_nv_n)\subseteq\F_1$. In this case the map $\dot{\cal R}_F$ is called quasi-Hadamard derivative of ${\cal R}$ at $F$ tangentially to $\bL_1^0\langle\bL_1\rangle$.
\end{definition}

Note that even when $\bL_1^0=\bL_1$, quasi-Hadamard differentiability of ${\cal R}$ at $F$ tangentially to $\bL_1\langle\bL_1\rangle$ is not the same as Hadamard differentiability of ${\cal R}$ at $F$ tangentially to $\bL_1$ (with $\bL_0$ regarded as the basic linear space containing both $\F_1$ and $\bL_1$). Indeed, $\|\cdot\|_{1,\ell}$ does not impose a norm on all of $\bL_0$ (but only on $\bL_1$), so that Hadamard differentiability w.r.t.\ the norm $\|\cdot\|_{1,\ell}$ is not defined.

\begin{theorem}\label{main theorem}
Let $F\in\F_1$ and assume that it is continuous at ${\cal R}_\alpha(F)$. Then the functional ${\cal R}_\alpha:\F_1\to\R$ is quasi-Hadamard differentiable at $F$ tangentially to $\bL_1\langle\bL_1\rangle$ with linear quasi-Hadamard derivative $\dot {\cal R}_{\alpha;F}:\bL_1\rightarrow\R$ given by
\begin{equation}\label{def of limit of emp process}
    \dot{\cal R}_{\alpha;F}(v)\,:=\,-\frac{(1-\alpha)\int_{(-\infty,0)}v(x+{\cal R}_\alpha(F))\,\ell(dx)\,+\,\alpha\int_{(0,\infty)} v(x+{\cal R}_\alpha(F))\,\ell(dx)}{ (1-2\alpha)F({\cal R}_\alpha(F))\,+\,\alpha}\,.\nonumber  %\,,\quad v\in\bL_1
\end{equation}
\end{theorem}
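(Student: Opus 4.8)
The plan is to regard ${\cal R}_\alpha$ as an implicitly defined $Z$-functional and to differentiate it by an implicit-function argument, the decisive preliminary step being to linearize ${\cal U}_\alpha$ in its distribution-function argument. Writing $m_0:={\cal R}_\alpha(F)$, I would first use the integration-by-parts representation (Lemma \ref{representation int by parts - mathbb}) to rewrite, for every $G\in\F_1$ and $m\in\R$,
\[
  {\cal U}_\alpha(G)(m)=\alpha\int_{(m,\infty)}(1-G(x))\,\ell(dx)-(1-\alpha)\int_{(-\infty,m)}G(x)\,\ell(dx).
\]
This is the device that neutralizes the unboundedness of $U_\alpha$ (the feature that obstructs classical linearization results): its right-hand side is affine in $G$. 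Consequently, for a triplet $(v,(v_n),(\varepsilon_n))$ as in Definition \ref{definition quasi hadamard - L 1}, one obtains the exact expansion ${\cal U}_\alpha(F+\varepsilon_nv_n)(m)={\cal U}_\alpha(F)(m)-\varepsilon_n B_n(m)$, where $B_n(m):=\alpha\int_{(m,\infty)}v_n(x)\,\ell(dx)+(1-\alpha)\int_{(-\infty,m)}v_n(x)\,\ell(dx)$, so that the dependence on $v_n$ enters only through half-line integrals of the $\ell$-integrable function $v_n$, which are dominated by $\|v_n\|_{1,\ell}$.

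Next I would put $m_n:={\cal R}_\alpha(F+\varepsilon_nv_n)$, the unique zero of ${\cal U}_\alpha(F+\varepsilon_nv_n)$, and establish $m_n\to m_0$: since $\|(F+\varepsilon_nv_n)-F\|_{1,\ell}=\varepsilon_n\|v_n\|_{1,\ell}\to 0$ (the norms $\|v_n\|_{1,\ell}$ converge to $\|v\|_{1,\ell}<\infty$) and $\|\cdot\|_{1,\ell}$ metrizes the $1$-weak topology, the $1$-weak continuity from Theorem \ref{1 weak continuity of R alpha} gives $m_n\to m_0$. Evaluating the expansion at $m=m_n$ and using ${\cal U}_\alpha(F+\varepsilon_nv_n)(m_n)=0={\cal U}_\alpha(F)(m_0)$ yields ${\cal U}_\alpha(F)(m_n)-{\cal U}_\alpha(F)(m_0)=\varepsilon_nB_n(m_n)$. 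From the representation above, $m\mapsto{\cal U}_\alpha(F)(m)$ is Lipschitz with a.e.\ derivative $-[(1-2\alpha)F(m)+\alpha]$, so the left-hand side equals $-\int_{m_0}^{m_n}[(1-2\alpha)F(x)+\alpha]\,\ell(dx)$. Dividing by $\varepsilon_n$ and by $(m_n-m_0)$ (the indices with $m_n=m_0$ force $B_n(m_n)=0$ and are treated separately) produces
\[
  \frac{m_n-m_0}{\varepsilon_n}=-\frac{B_n(m_n)}{\displaystyle\frac{1}{m_n-m_0}\int_{m_0}^{m_n}[(1-2\alpha)F(x)+\alpha]\,\ell(dx)}.
\]

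It then remains to pass to the limit. For the denominator, the average of $(1-2\alpha)F(\cdot)+\alpha$ over the shrinking interval between $m_0$ and $m_n$ converges to $(1-2\alpha)F(m_0)+\alpha$ \emph{exactly because $F$ is continuous at $m_0$}; this is where the hypothesis enters, a jump of $F$ at $m_0$ making the limit dependent on the side of approach. The limit is bounded below by $\min\{\alpha,1-\alpha\}>0$, so no division by zero occurs and the claimed denominator is justified. For the numerator I would prove $B_n(m_n)\to\alpha\int_{(m_0,\infty)}v\,\ell(dx)+(1-\alpha)\int_{(-\infty,m_0)}v\,\ell(dx)$ by splitting $\int_{(m_n,\infty)}v_n-\int_{(m_0,\infty)}v$ into $\int_{(m_n,\infty)}(v_n-v)$, bounded in absolute value by $\|v_n-v\|_{1,\ell}\to 0$, and $\int_{(m_n,\infty)}v-\int_{(m_0,\infty)}v$, which vanishes in the limit by absolute continuity of the integral of the fixed function $v$ together with $m_n\to m_0$; the left half-line is symmetric. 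Combining the two limits and substituting $x\mapsto x+m_0$ turns the quotient into $\dot{\cal R}_{\alpha;F}(v)$, while linearity of $\dot{\cal R}_{\alpha;F}$ is read off its formula and its $\|\cdot\|_{1,\ell}$-continuity follows from $|\int_{(-\infty,m_0)}w\,\ell(dx)|,|\int_{(m_0,\infty)}w\,\ell(dx)|\le\|w\|_{1,\ell}$.

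The hard part will be the numerator limit, where the \emph{moving} threshold $m_n$ and the \emph{varying} integrand $v_n$ have to be decoupled at once; the displayed split is what makes this work, and it is precisely there that the strong mode of convergence $\|v_n-v\|_{1,\ell}\to 0$ built into the tangent space is indispensable. The continuity of $F$ at $m_0$ is the second essential ingredient, ensuring that the difference quotient of ${\cal U}_\alpha(F)$ converges to a genuine, direction-independent derivative.
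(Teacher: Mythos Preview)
Your argument is correct and in fact more direct than the paper's. Both proofs start from the same integration-by-parts representation of ${\cal U}_\alpha$ (Lemma \ref{representation int by parts - mathbb}), which renders ${\cal U}_\alpha(G)(m)$ affine in $G$ and yields the exact remainder-free expansion you write down; both also invoke the $1$-weak continuity of ${\cal R}_\alpha$ (Theorem \ref{1 weak continuity of R alpha}) to obtain $m_n\to m_0$. The paper, however, does not manipulate the defining equation directly. Instead it factors ${\cal R}_\alpha$ as the composition of $F\mapsto{\cal U}_\alpha(F)|_{[a,b]}$ with an abstract ``zero-locating'' functional ${\cal I}_{a,b}(f):=f^{\rightarrow}(0)$ on bounded non-increasing functions, proves (as an independent Lemma \ref{HDdiffbarkeitQuantile}) that ${\cal I}_{a,b}$ is Hadamard differentiable at any $f$ that is differentiable at its zero with strictly negative derivative, and then applies a chain rule. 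Your implicit-function computation---writing ${\cal U}_\alpha(F)(m_n)-{\cal U}_\alpha(F)(m_0)$ as an integral of $-[(1-2\alpha)F+\alpha]$ and dividing---bypasses this abstract layer entirely; it is shorter, requires no sup-norm tangential differentiability lemma, and exploits the affinity in $G$ more fully. The paper's route, in exchange, isolates a reusable result (Lemma \ref{HDdiffbarkeitQuantile}) of independent interest for general $Z$-functionals, and handles the subtle boundedness step (your equation for $(m_n-m_0)/\varepsilon_n$ presupposes the denominator stays away from zero, which you get from the uniform lower bound $\min\{\alpha,1-\alpha\}$) inside that lemma. Your brief remark on the indices with $m_n=m_0$ is adequate: along such a subsequence $B_n(m_0)=0$, hence the limit $B(m_0)=0$ and $\dot{\cal R}_{\alpha;F}(v)=0$, matching $(m_n-m_0)/\varepsilon_n=0$.
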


Note that $(1-2\alpha)F({\cal R}_\alpha(F))\,+\,\alpha = (1-\alpha)F({\cal R}_\alpha(F))+\alpha(1-F({\cal R}_\alpha(F))) > 0$ holds so that the denominator in (\ref{def of limit of emp process}) is strictly positive. Also note that quasi-Hadamard differentiability is already known form Theorem 2.4 in \cite{Kraetschmeretal2014b}. However, in \cite{Kraetschmeretal2014b} the derivative was not specified explicitly. The proof of Theorem \ref{main theorem} can be found in Section \ref{proof of main theorem}.

\begin{remarknorm}\label{main theorem - remark}
As a direct consequence of Theorem \ref{main theorem} we obtain that the functional ${\cal R}_\alpha$ is also quasi-Hadamard differentiable at $F$ (being continuous at ${\cal R}_\alpha(F)$) tangentially to any subspace of $\bL_1$ that is equipped with a norm being at least as strict as the norm $\|\cdot\|_{1,\ell}$.
{\hspace*{\fill}$\Diamond$\par\bigskip}
\end{remarknorm}

\begin{examplenorm}\label{main theorem - example}
To illustrate Remark \ref{main theorem - remark}, let $\phi:\R\rightarrow[1,\infty)$ be a continuous function that is non-increasing on $(-\infty,0]$ and non-decreasing on $[0,\infty)$. Let $\F_\phi$ be the set of all distribution functions $F$ on $\R$ for which $\|F-\eins_{[0,\infty)}\|_\phi<\infty$, where $\|v\|_\phi:=\sup_{x\in\R}|v(x)|\phi(x)$. Let $\D$ be the space of all bounded \cadlag\ functions on $\R$ and $\D_\phi$ be the subspace of all $v\in\D$ satisfying $\|v\|_\phi<\infty$ and $\lim_{|x|\to\infty} |v(x)|=0$. If $C_\phi:=\int1/\phi\,d\ell<\infty$, then $\D_{\phi}\subseteq\bL_1$ and $\F_\phi\subseteq\F_1$. On the space $\D_\phi$ the norm $\|\cdot\|_\phi$ is stricter than $\|\cdot\|_{1,\ell}$, because
\begin{equation}\label{main theorem - corollary - proof - 10}
    \|v\|_{1,\ell}=\int|v(x)|\,\ell(dx)\le C_\phi\|v\|_\phi\qquad\mbox{for every }v\in\D_\phi.
\end{equation}
Therefore ${\cal R}_\alpha$ is also quasi-Hadamard differentiable at $F$ tangentially to $\D_\phi\langle\D_\phi\rangle$ with linear quasi-Hadamard derivative $\dot {\cal R}_{\alpha;F}:\D_\phi\rightarrow\R$ given by (\ref{def of limit of emp process}) restricted to $v\in\D_\phi$, where $\D_\phi$ is equipped with the norm $\|\cdot\|_\phi$.
{\hspace*{\fill}$\Diamond$\par\bigskip}
\end{examplenorm}

The established quasi-Hadamard differentiability of ${\cal R}_\alpha$ brings us in the position to easily derive results on the asymptotics of ${\cal R}_\alpha(\widehat F_n)$; see Sections \ref{section asymptotic distribution}--\ref{section bootstrapping the asymptotic distribution} and \ref{section asymptotic distribution - param}. In Section \ref{section asymptotic distribution} we combine Theorem \ref{main theorem} with a central limit theorem (by Dede \cite{Dede2009}; cf.\ Theorem \ref{CLT by Dede} below) for the empirical process in the space $(\bL_1,\|\cdot\|_{1,\ell})$ in order to obtain the asymptotic distribution of ${\cal R}_\alpha(\widehat F_n)$ in a rather general nonparametric setting. In view of Example \ref{main theorem - example} one can alternatively use central limit theorems for the empirical process in the space $(\D_\phi,\|\cdot\|_{\phi})$ to obtain the asymptotic distribution of ${\cal R}_\alpha(\widehat F_n)$. See, for instance, Examples 4.4--4.5 in \cite{BeutnerZaehle2015} as well as references cited there.

%%%%%%%%%%%%%%%%%%%%%%%%%%%%%%%%%%%%%%%%%%%%%%%%%%%%%%%%%%%%%%%%
%%%%%%%%%%%%%%%%%%%%%%%%%%%%%%%%%%%%%%%%%%%%%%%%%%%%%%%%%%%%%%%%
%%%%%%%%%%%%%%%%%%%%%%%%%%%%%%%%%%%%%%%%%%%%%%%%%%%%%%%%%%%%%%%%
%%%%%%%%%%%%%%%%%%%%%%%%%%%%%%%%%%%%%%%%%%%%%%%%%%%%%%%%%%%%%%%%
%%%%%%%%%%%%%%%%%%%%%%%%%%%%%%%%%%%%%%%%%%%%%%%%%%%%%%%%%%%%%%%%
%%%%%%%%%%%%%%%%%%%%%%%%%%%%%%%%%%%%%%%%%%%%%%%%%%%%%%%%%%%%%%%%

\section{Nonparametric estimation of ${\cal R}_\alpha(F)$}\label{NonParamEstim}

In this section we consider nonparametric statistical models. We will always assume that the sequence of observations $(X_i)$ is a strictly stationary sequence of real-valued random variables. In addition we will mostly assume that $(X_i)$ is ergodic; see Section 6.1 and 6.7 in \cite{Breiman} for the definition of a strictly stationary and ergodic sequence. Recall that every sequence of i.i.d.\ random variables is strictly stationary and ergodic. Moreover a strictly stationary sequence is ergodic when it is mixing in the ergodic sense, and it is mixing in the ergodic sense when it is $\alpha$-mixing; see Section 2.5 in \cite{Bradley2005}. For illustration, also note that many GARCH processes are strictly stationary and ergodic; cf.\ \cite{Boussamaetal2011,Nelson1990}.

Throughout this section the estimator for the marginal distribution function $F$ of $(X_i)$ is assumed to be the empirical distribution function $\widehat F_n$ of $X_1,\ldots,X_n$ as defined in (\ref{def edf}). Note that the mapping $\Omega\rightarrow\F_1$, $\omega\mapsto\widehat F_n(\omega,\cdot)$, is $({\cal F},{\cal B}(\F_1))$-measurable for the Borel $\sigma$-algebra ${\cal B}(\F_1)$ on $(\F_1,d_{\mbox{\scriptsize{\rm W}},1})$, because the mapping $\R^n\rightarrow\F_1$, $(x_{1},\ldots,x_{n})\mapsto \frac{1}{n}\sum_{i=1}^{n}\eins_{[x_{i},\infty)}$, is $(\|\cdot\|,d_{\mbox{\scriptsize{\rm W}},1})$-continuous. Hence by continuity of $\cR_{\alpha}$ w.r.t.\ $d_{\mbox{\scriptsize{\rm W}},1}$, it follows that $\cR_{\alpha}(\widehat{F}_{n})$ is a real-valued random variable on $(\Omega,{\cal F},\pr)$.

%%%%%%%%%%%%%%%%%%%%%%%%%%%%%%%%%%%%%%%%%%%%%%%%%%%%%%%%%%%%%%%%
%%%%%%%%%%%%%%%%%%%%%%%%%%%%%%%%%%%%%%%%%%%%%%%%%%%%%%%%%%%%%%%%
%%%%%%%%%%%%%%%%%%%%%%%%%%%%%%%%%%%%%%%%%%%%%%%%%%%%%%%%%%%%%%%%

\subsection{Strong consistency}\label{section strong consistency}

%We will assume that the sequence of observations is strictly stationary and ergodic; see Section 6.1 and 6.7 in \cite{Breiman} for the definition of a strictly stationary and ergodic sequence. Recall that every sequence of i.i.d.\ random variables is strictly stationary and ergodic. Moreover a strictly stationary sequence is ergodic when it is mixing in the ergodic sense, and it is mixing in the ergodic sense when it is $\alpha$-mixing; see Section 2.5 in \cite{Bradley2005}. For illustration, also note that many GARCH processes are strictly stationary and ergodic; cf.\ \cite{Boussamaetal2011,Nelson1990}.

For $1/2\le\alpha<1$ the following theorem is a direct consequence of Theorem 2.6 in \cite{Kraetschmeretal2014}. In the general case, Theorem \ref{1 weak continuity of R alpha} ensures that one can follow the lines in the proof of Theorem 2.6 in \cite{Kraetschmeretal2014} to obtain the assertion of Theorem \ref{Consistency thm}; we omit the details.

\begin{theorem}\label{Consistency thm}
Let $(X_i)$ be a strictly stationary and ergodic sequence of $L^1$-random variables on some probability space $(\Omega,{\cal F},\pr)$, and denote by $F$ the distribution function of the $X_i$. Let $\widehat F_n$ be the empirical distribution function of $X_1,\ldots,X_n$ as defined in (\ref{def edf}). Then the plug-in estimator ${\cal R}_\alpha(\widehat F_n)$ is strongly consistent for ${\cal R}_\alpha(F)$ in the sense that
$$
    {\cal R}_\alpha(\widehat F_n)\to{\cal R}_\alpha(F)\qquad\pr\mbox{-a.s.}
$$
\end{theorem}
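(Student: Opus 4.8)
The plan is to exploit the $1$-weak continuity of ${\cal R}_\alpha$ established in Theorem \ref{1 weak continuity of R alpha}. Since ${\cal R}_\alpha:\F_1\to\R$ is continuous for the $1$-weak topology, it suffices to show that the empirical distribution function $\widehat F_n$ converges $1$-weakly to $F$ almost surely, i.e.\ that there is a $\pr$-null set outside of which $\widehat F_n\to F$ in the $1$-weak topology; applying the continuous map ${\cal R}_\alpha$ then yields the claim. Because the Wasserstein-$1$ metric $d_{\mbox{\scriptsize{\rm W}},1}$ metrizes the $1$-weak topology on $\F_1$, this reduces to proving
$$
    d_{\mbox{\scriptsize{\rm W}},1}(\widehat F_n,F)=\int|\widehat F_n(x)-F(x)|\,\ell(dx)\longrightarrow 0\qquad\pr\mbox{-a.s.}
$$

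To control this quantity I would combine two applications of Birkhoff's ergodic theorem (available since $(X_i)$ is strictly stationary and ergodic, hence so are the image sequences below). First, applied to the bounded variables $\eins_{(-\infty,x]}(X_i)$ for fixed $x$, the ergodic theorem gives $\widehat F_n(x)\to F(x)$ $\pr$-a.s.\ for each rational $x$; collecting the countably many null sets and using monotonicity of $\widehat F_n$ and $F$ together with right-continuity upgrades this to the uniform (Glivenko--Cantelli) statement $\sup_{x\in\R}|\widehat F_n(x)-F(x)|\to 0$ $\pr$-a.s. In particular $\widehat F_n\to F$ weakly outside a $\pr$-null set. Second, since $X_1\in L^1$, the ergodic theorem applied to the integrable function $|X_1|$ yields
$$
    \int|x|\,d\widehat F_n(x)=\frac1n\sum_{i=1}^n|X_i|\longrightarrow\ex[|X_1|]=\int|x|\,dF(x)\qquad\pr\mbox{-a.s.},
$$
so that the first absolute moments converge as well.

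Finally I would combine the two convergences on the common almost-sure event where both hold: weak convergence of $\widehat F_n$ to $F$ together with convergence of the first absolute moments $\int|x|\,d\widehat F_n\to\int|x|\,dF$ implies $1$-weak (equivalently, Wasserstein-$1$) convergence $\widehat F_n\to F$. This last implication is the crux of the argument and the step I expect to be the main obstacle, since weak convergence by itself does not control the mass in the tails, while ${\cal R}_\alpha$ is \emph{only} continuous for the $1$-weak topology and not for the ordinary weak topology. The convergence of first moments supplies exactly the uniform integrability needed to close this gap: it guarantees $\int f\,d\widehat F_n\to\int f\,dF$ for every $f\in{\cal C}_1$, which is precisely the definition of $1$-weak convergence. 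Invoking Theorem \ref{1 weak continuity of R alpha} on this almost-sure event then gives ${\cal R}_\alpha(\widehat F_n)\to{\cal R}_\alpha(F)$ $\pr$-a.s.
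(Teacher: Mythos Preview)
Your proposal is correct and follows essentially the same approach as the paper. The paper does not spell out a proof but refers to Theorem~2.6 in \cite{Kraetschmeretal2014}, whose argument is precisely the one you outline: use Birkhoff's ergodic theorem to obtain both the Glivenko--Cantelli statement and convergence of first absolute moments, deduce almost-sure $1$-weak (equivalently, Wasserstein-$1$) convergence $\widehat F_n\to F$, and then apply the $1$-weak continuity of ${\cal R}_\alpha$ from Theorem~\ref{1 weak continuity of R alpha}.
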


%\begin{proof}
%In view of Theorem \ref{1 weak continuity of R alpha}, one can follow the lines in the proof of Theorem 2.6 in \cite{Kraetschmeretal2014}.
%\end{proof}

If $X_1,X_2,\ldots$ are i.i.d.\ random variables, then strong consistency can also be obtained from classical results on Z-estimators as, for example, Lemma A in Section 7.2.1 of \cite{Serfling1980}. Moreover, it was shown recently by Holzmann and Klar \cite[Theorem 2]{HolzmannKlar2015} that in the i.i.d.\ case one even has $\sup_{\alpha\in[\alpha_\ell,\alpha_u]}|{\cal R}_\alpha(\widehat F_n)\to{\cal R}_\alpha(F)|\rightarrow 0$ $\pr$-a.s.\ for any $\alpha_\ell,\alpha_u\in(0,1)$ with $\alpha_\ell<\alpha_u$.

%%%%%%%%%%%%%%%%%%%%%%%%%%%%%%%%%%%%%%%%%%%%%%%%%%%%%%%%%%%%%%%%
%%%%%%%%%%%%%%%%%%%%%%%%%%%%%%%%%%%%%%%%%%%%%%%%%%%%%%%%%%%%%%%%
%%%%%%%%%%%%%%%%%%%%%%%%%%%%%%%%%%%%%%%%%%%%%%%%%%%%%%%%%%%%%%%%

\subsection{Asymptotic distribution}\label{section asymptotic distribution}

%As in Section \ref{section strong consistency}, we will assume that the sequence of observations $(X_i)$ is a strictly stationary and ergodic sequence of real-valued random variables on some probability space $(\Omega,{\cal F},\pr)$.
Dedecker and Prieur \cite{DedeckerPrieur2005} introduced the following dependence coefficients for a strictly stationary sequence of real-valued random variables $(X_i)\equiv (X_i)_{i\in\N}$ on some probability space $(\Omega,{\cal F},\pr)$:
\begin{eqnarray}
    \widetilde\phi(n) & := & \sup_{k\in\N}\,\sup_{x\in\R}\,\|\,\pr[X_{n+k}\in(-\infty,x]|{\cal F}_1^k](\cdot)-\pr[X_{n+k}\in(-\infty,x]]\,\|_\infty,\label{tilde phi mc}\\
    \widetilde\alpha(n) & := & \sup_{k\in\N}\,\sup_{x\in\R}\,\|\,\pr[X_{n+k}\in (-\infty,x]|{\cal F}_1^k](\cdot)-\pr[X_{n+k}\in(-\infty,x]]\,\|_1. \label{tilde alpha mc}
\end{eqnarray}
Here ${\cal F}_1^k:=\sigma(X_1,\ldots,X_k)$ and $\|\cdot\|_p$ denotes the usual $L^p$-norm on $L^p=L^p(\Omega,{\cal F},\pr)$, $p\in[1,\infty]$. Note that by Proposition 3.22 in \cite{Bradley2002} the usual $\phi$- and $\alpha$-mixing coefficients $\phi(n)$ and $\alpha(n)$ can be represented as in (\ref{tilde phi mc})--(\ref{tilde alpha mc}) with $\sup_{x\in\R}$ and $(-\infty,x]$ replaced by $\sup_{A\in{\cal B}(\R)}$ and $A$, respectively. In particular, $\widetilde\phi(n)\le\phi(n)$ and $\widetilde\alpha(n)\le\alpha(n)$. It is worth mentioning that in \cite{DedeckerPrieur2005} the starting point is actually a strictly stationary sequence of random variables indexed by $\Z$ (rather than $\N$) and that therefore the definitions of the above dependence coefficients are slightly different. However, it is discussed in detail in the Appendix \ref{dependence coefficients} that any strictly stationary sequence $(X_i)\equiv(X_i)_{i\in\N}$ can be extended to a strictly stationary sequence $(Y_{i})_{i\in\Z}$ satisfying $\widetilde{\phi}(n) = \overline{\phi}(n)$ and $\widetilde{\alpha}(n) = \overline{\alpha}(n)$, where $\overline{\phi}(n)$ and $\overline{\alpha}(n)$ are the dependence coefficients of $(Y_i)_{i\in\Z}$ as originally introduced in \cite{DedeckerPrieur2005}. It is also discussed in the Appendix \ref{dependence coefficients} that if $(X_i)$ is in addition ergodic, then $(Y_{i})_{i\in\Z}$ is ergodic too.

Let us denote by $Q_{|X_1|}$ the \cadlag\ inverse of the tail function $x\mapsto\pr[|X_1|>x]$. Let us write ${\rm N}_{0,s^2}$ for the centered normal distribution with variance $s^2$. Moreover, let us use $\leadsto$ to denote convergence in distribution.

\begin{theorem}\label{main theorem coroll}
Let $(X_i)$ be a strictly stationary and ergodic sequence of real-valued random variables on some probability space $(\Omega,{\cal F},\pr)$. Denote by $F$ the distribution function of the $X_i$, and assume that $F$ is continuous at ${\cal R}_\alpha(F)$ and that $\int\sqrt{F(1-F)}\,d\ell<\infty$ (in particular $F\in\F_1$). Let $\widehat F_n$ be the empirical distribution function of $X_1,\ldots,X_n$ as defined in (\ref{def edf}). Finally assume that one of the following two conditions holds:
\begin{equation}\label{main theorem coroll - eq - 10}
    \sum_{n\in\N}n^{-1/2}\,\widetilde\phi(n)^{1/2}<\infty,
\end{equation}
\begin{equation}\label{main theorem coroll - eq - 20}
    \sum_{n\in\N}n^{-1/2}\int_{(0,\widetilde\alpha(n))}Q_{|X_1|}(u)\,u^{-1/2}\,\ell(du)<\infty.
\end{equation}
Then
$$
    \sqrt{n}({\cal R}_\alpha(\widehat F_n)-{\cal R}_\alpha(F))\,\leadsto\,Z_F\qquad\mbox{in $(\R,{\cal B}(\R))$}
$$
for $Z_F\sim{\rm N}_{0,s^2}$ with
\begin{equation}\label{main theorem coroll - eq - 40}
    s^2=s^2_{\alpha,F}:= \int_{\R^2}f_{\alpha,F}(t_0)\,C_F(t_0,t_1)f_{\alpha,F}(t_1)\,(\ell\otimes\ell)(d(t_0,t_1)),
\end{equation}
where
\begin{eqnarray}
    f_{\alpha,F}(t) & := & \frac{1}{(1-2\alpha)F({\cal R}_\alpha(F))+\alpha}\Big((1-\alpha)\eins_{(-\infty,{\cal R}_\alpha(F)]}(t)+\alpha\eins_{({\cal R}_\alpha(F),\infty)}(t)\Big),\quad    \label{main theorem coroll - eq - 50}\\
    C_{F}(t_0,t_1) & := & F(t_0\wedge t_1)(1-F(t_0\vee t_1))+ \sum_{i=0}^1\sum_{k=2}^{\infty}\covi(\eins_{\{X_1 \le t_i\}},\eins_{\{X_k \le t_{i-1}\}}).
    \label{main theorem coroll - eq - 60}
\end{eqnarray}
\end{theorem}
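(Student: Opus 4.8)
The plan is to deduce the asymptotic normality from the quasi-Hadamard differentiability established in Theorem \ref{main theorem} together with a central limit theorem for the empirical process in the space $(\bL_1,\|\cdot\|_{1,\ell})$, glued together by the quasi-Hadamard version of the functional delta-method formulated in Appendix \ref{appendix QHD and FDM}. Since $F$ is assumed to be continuous at ${\cal R}_\alpha(F)$, Theorem \ref{main theorem} applies and furnishes the continuous linear derivative $\dot{\cal R}_{\alpha;F}:\bL_1\to\R$ of (\ref{def of limit of emp process}), tangentially to $\bL_1\langle\bL_1\rangle$. The remaining work is to push the weak limit of $\sqrt{n}(\widehat F_n-F)$ through this derivative and to identify the resulting centered Gaussian law.

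First I would invoke the central limit theorem of Dede (Theorem \ref{CLT by Dede}) for the empirical process $\sqrt{n}(\widehat F_n-F)$, viewed as a random element of $(\bL_1,\|\cdot\|_{1,\ell})$. The two dependence conditions (\ref{main theorem coroll - eq - 10}) and (\ref{main theorem coroll - eq - 20}) are exactly the hypotheses under which Dede's theorem yields $\sqrt{n}(\widehat F_n-F)\leadsto B_F$ in $(\bL_1,\|\cdot\|_{1,\ell})$ for a centered Gaussian element $B_F$, while the assumption $\int\sqrt{F(1-F)}\,d\ell<\infty$ guarantees that the candidate limit has finite first $\bL_1$-moment, so that $B_F$ genuinely lives in $\bL_1$. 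The covariance functional of $B_F$ is the long-run covariance of the indicator process $(\eins_{\{X_i\le t\}})_i$, namely $\sum_{k\in\Z}\covi(\eins_{\{X_1\le t_0\}},\eins_{\{X_{1+k}\le t_1\}})$. Splitting this series into the lag-zero term, the lags $k\ge 2$, and (after a stationarity shift) the negative lags, one obtains $F(t_0\wedge t_1)(1-F(t_0\vee t_1))$ from $k=0$ and the two cross-sums over $k\ge 2$ in both orderings of $(t_0,t_1)$; reading the $t$-indices cyclically (so that $t_{-1}=t_1$) this is precisely $C_F(t_0,t_1)$ of (\ref{main theorem coroll - eq - 60}).

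Next I would apply the functional delta-method of Appendix \ref{appendix QHD and FDM}. Because ${\cal R}_\alpha$ is quasi-Hadamard differentiable at $F$ tangentially to $\bL_1\langle\bL_1\rangle$ with continuous derivative $\dot{\cal R}_{\alpha;F}$, because the increments $\widehat F_n-F$ lie in $\bL_1$, and because $(\bL_1,\|\cdot\|_{1,\ell})$ is separable (so the Borel-measurable empirical element converges weakly in the ordinary sense, the required measurability of ${\cal R}_\alpha(\widehat F_n)$ having been noted at the start of Section \ref{NonParamEstim}), the delta-method transfers the weak convergence of the empirical process into
$$
    \sqrt{n}({\cal R}_\alpha(\widehat F_n)-{\cal R}_\alpha(F))\,\leadsto\,\dot{\cal R}_{\alpha;F}(B_F)\qquad\mbox{in }(\R,{\cal B}(\R)).
$$
As $\dot{\cal R}_{\alpha;F}$ is linear and continuous and $B_F$ is a centered Gaussian element, the image $\dot{\cal R}_{\alpha;F}(B_F)$ is centered normal, which yields $Z_F\sim{\rm N}_{0,s^2}$.

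It then remains to compute $s^2$. After the change of variables $y=x+{\cal R}_\alpha(F)$, the derivative (\ref{def of limit of emp process}) rewrites as $\dot{\cal R}_{\alpha;F}(v)=-\int_\R v(y)\,f_{\alpha,F}(y)\,\ell(dy)$ with $f_{\alpha,F}$ exactly as in (\ref{main theorem coroll - eq - 50}); the endpoint $\{{\cal R}_\alpha(F)\}$ is $\ell$-null, so open versus closed intervals are immaterial. Hence $s^2=\vari\big(\int f_{\alpha,F}\,dB_F\big)=\int_{\R^2}f_{\alpha,F}(t_0)\,C_F(t_0,t_1)\,f_{\alpha,F}(t_1)\,(\ell\otimes\ell)(d(t_0,t_1))$, which is (\ref{main theorem coroll - eq - 40}); the interchange of variance and integral is a Fubini argument justified by $\int|f_{\alpha,F}|\,d\ell<\infty$ and the finite first $\bL_1$-moment of $B_F$. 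I expect the main obstacle to be the clean matching of the two cited machines rather than the closing variance calculation: one must check that the mode of convergence produced by Dede's CLT (weak convergence of a Borel-measurable $\bL_1$-valued element) is exactly what the quasi-Hadamard delta-method of Appendix \ref{appendix QHD and FDM} consumes, and one must carefully identify the limiting covariance $C_F$ through the stationarity-and-reindexing argument, paying attention to the cyclic convention for the $t$-indices in (\ref{main theorem coroll - eq - 60}).
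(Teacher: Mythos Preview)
Your proposal matches the paper's proof exactly: Theorem \ref{main theorem} plus Dede's CLT (Theorem \ref{CLT by Dede}) fed into the delta-method (Theorem \ref{delta method for the bootstrap}(i)), then identify $\dot{\cal R}_{\alpha;F}(B_F)=-\int f_{\alpha,F}B_F\,d\ell$ and read off the variance as $\Phi_{B_F}(f_{\alpha,F},f_{\alpha,F})$. One small correction: your Fubini justification via $\int|f_{\alpha,F}|\,d\ell<\infty$ fails, since $f_{\alpha,F}$ is bounded below by $\min(\alpha,1-\alpha)/\big((1-2\alpha)F({\cal R}_\alpha(F))+\alpha\big)>0$ on all of $\R$ and hence is not in $\bL_1$; the right observation (and the one the paper uses) is that $f_{\alpha,F}\in\bL_\infty$, so it represents a continuous linear functional on $\bL_1$, whence $\dot{\cal R}_{\alpha;F}(B_F)$ is Gaussian by the very definition of an $\bL_1$-valued Gaussian element and $s^2=\Phi_{B_F}(f_{\alpha,F},f_{\alpha,F})$ is simply the definition of the covariance operator---no separate Fubini step is required.
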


\begin{proof}
Theorem \ref{main theorem} shows that ${\cal R}_\alpha$ is quasi-Hadamard differentiable at $F$ tangentially to $\bL_1\langle\bL_1\rangle$ (w.r.t.\ the norm $\|\cdot\|_{1,\ell}$) with quasi-Hadamard derivative $\dot{\cal R}_{\alpha;F}$ given by (\ref{def of limit of emp process}). The functional delta-method in the form of Theorem \ref{delta method for the bootstrap}(i) and Theorem \ref{CLT by Dede} then imply that $\sqrt{n}({\cal R}_\alpha(\widehat F_n)-{\cal R}_\alpha(F))$ converges in distribution to $\dot{\cal R}_{\alpha;F}(B_F)$, where $B_F$ is an $\bL_1$-valued centered Gaussian random variable with covariance operator $\Phi_{B_F}$ given by (\ref{CLT by Dede - eq}). Now, $\dot{\cal R}_{\alpha;F}(B_F)=-\int f_{\alpha,F}(x)\,B_{F}(x)\,\ell(dx)$ for the $\bL_\infty$-function $f_{\alpha,F}$ given by (\ref{main theorem coroll - eq - 50}). Since $B_{F}$ is a centered Gaussian random element of $\bL_{1}$, and since $f_{\alpha,F}$ represents a continuous linear functional on $\bL_{1}$, the random variable $\dot{\cal R}_{\alpha;F}(B_F)$ is normally distributed with zero mean and variance $\vari[\dot{\cal R}_{\alpha;F}(B_F)]=\ex[\dot{\cal R}_{\alpha;F}(B_F)^2]=\Phi_{B_F}(f_{\alpha,F},f_{\alpha,F})$, and the latter expression is equal to the right-hand side in (\ref{main theorem coroll - eq - 40}).
\end{proof}

Note that when $X_1,X_2,\ldots$ are i.i.d.\ random variables, then (\ref{main theorem coroll - eq - 10}) and (\ref{main theorem coroll - eq - 20}) are clearly satisfied and the expression for the variance $s^2$ in (\ref{main theorem coroll - eq - 40}) simplifies insofar as the sum $\sum_{i=0}^1\sum_{k=2}^\infty(\cdots)$ in (\ref{main theorem coroll - eq - 60}) vanishes, so that $s^2=\ex[U_\alpha(X_1-{\cal R}_\alpha(F))^2]/d_F(\alpha)^2$ with $d_F(\alpha):=(1-2\alpha)F({\cal R}_\alpha(F))+\alpha$. The latter may be seen by applying Hoeffding's variance formula (cf., e.g., Lemma 5.24 in \cite{McNeiletal2005}) to calculate $\vari[(X_1-{\cal R}_{\alpha}(F))^{+}]$ and $\vari[(X_{1}-{\cal R}_{\alpha}(F))^{-}]$ (take into account that by (\ref{def expectiles-based shortfall risk measure}) we have $\ex[U_\alpha(X_1-{\cal R}_\alpha(F))^2] = \vari[U_\alpha(X_1-{\cal R}_\alpha(F))]$, and obviously $\covi((X_1-{\cal R}_{\alpha}(F))^{+},(X_1-{\cal R}_{\alpha}(F))^{-})=0$). But even in this case the variance $s^2$ depends on the unknown distribution function $F$ in a fairly complex way. So, for the derivation of asymptotic confidence intervals the bootstrap results of Section \ref{section bootstrapping the asymptotic distribution} are expected to lead to a more efficient method than the method that is based on the nonparametric estimation of $s^2=s_{\alpha,F}^2$.

%{\color{cyan}Die Diskussion mit Holzmann/Klar heben wir m.E. am besten dadurch hervor, dass die folgenden Bemerkungen in eine Remark-Umgebung gepackt werden. Damit m\"u{\ss}ten sie eigentlich zufrieden sein zumal sie selber in der neuesten Version unseren Artikel ``nur'' in der Einleitung diskutieren}

\begin{remarknorm}
In the i.i.d.\ case Theorem \ref{main theorem coroll} can also be obtained from classical results on Z-estimators as, for example, Theorem A in Section 7.2.2 of \cite{Serfling1980}. Recently Holzmann and Klar \cite[Theorem 7]{HolzmannKlar2015} showed that, still in the i.i.d.\ case, continuity of $F$ at ${\cal R}_\alpha(F)$ is even necessary in order to obtain a normal limit. It is also worth mentioning that the integrability condition on $F$ in Theorem \ref{main theorem coroll} is slightly stronger than needed, at least in the i.i.d.\ case. Holzmann and Klar \cite[Corollary 4]{HolzmannKlar2015} only assumed that $F$ possesses a finite second absolute moment which is slightly weaker than assuming our integrability condition. %; cf.\ part (i) of the following Remark \ref{example for cond of main theorem coroll}.
{\hspace*{\fill}$\Diamond$\par\bigskip}
\end{remarknorm}

\begin{remarknorm}\label{example for cond of main theorem coroll}
The following assertions illustrate the assumptions of Theorem \ref{main theorem coroll}.
\begin{itemize}
    \item[(i)] The integrability condition $\int\sqrt{F(1-F)}\,d\ell<\infty$ holds if $\int\phi^2\,dF<\infty$ for some continuous function $\phi:\R\rightarrow[1,\infty)$ satisfying $\int1/\phi\,d\ell<\infty$ and being strictly decreasing and strictly increasing on $\R_-$ and $\R_+$, respectively.
    \item[(ii)] Condition  (\ref{main theorem coroll - eq - 10}) holds if $\widetilde\phi(n)={\cal O}(n^{-{b}})$ for some $b>1$.
    \item[(iii)] Condition (\ref{main theorem coroll - eq - 20}) implies condition (\ref{main theorem coroll - eq - 10}) with $\widetilde\phi(n)$ replaced by $\widetilde\alpha(n)$.
    \item[(iv)] Condition (\ref{main theorem coroll - eq - 20}) is equivalent to
    $$
        \sum_{n\in\N}n^{-1/2}\int_{(0,\infty)}\widetilde{\alpha}(n)^{1/2}\wedge \pr[|X_1|>x]^{1/2}\,\ell(dx)<\infty.
    $$
    \item[(v)] Condition (\ref{main theorem coroll - eq - 20}) holds if $\int\sqrt{F(1-F)}\,d\ell<\infty$ and $\widetilde\alpha(n)={\cal O}(n^{-b})$ for some $b>1$.
\end{itemize}
See Section \ref{Proof of example for cond of main theorem coroll} for the proofs of these assertions.
{\hspace*{\fill}$\Diamond$\par\bigskip}
\end{remarknorm}

%%%%%%%%%%%%%%%%%%%%%%%%%%%%%%%%%%%%%%%%%%%%%%%%%%%%%%%%%%%%%%%%
%%%%%%%%%%%%%%%%%%%%%%%%%%%%%%%%%%%%%%%%%%%%%%%%%%%%%%%%%%%%%%%%
%%%%%%%%%%%%%%%%%%%%%%%%%%%%%%%%%%%%%%%%%%%%%%%%%%%%%%%%%%%%%%%%

\subsection{Bootstrap consistency}\label{section bootstrapping the asymptotic distribution}

In this section we present two results on bootstrap consistency in the setting of Theorem \ref{main theorem coroll}. %As before $(X_i)$ refers to a strictly stationary sequence of real-valued random variables on some probability space $(\Omega,{\cal F},\pr)$.
In the following Theorem \ref{main theorem coroll - bootstrap} we will assume that the random variables $X_1,X_2,\ldots$ are i.i.d. In Theorem \ref{bootstrap results of Radulovic} ahead we will assume that the sequence $(X_i)$ is $\beta$-mixing. We will use $\varrho_{\scriptsize{\rm BL}}$ to denote the bounded Lipschitz metric on the set of all Borel probability measures on $\R$; see the Appendix \ref{appendix QHD and FDM} for the definition of the bounded Lipschitz metric. By $\pr_\xi'$ we will mean the law of a random variable $\xi$ under $\pr'$, and as before ${\rm N}_{0,s^2}$ refers to the centered normal distribution with variance $s^2$.

\begin{theorem}\label{main theorem coroll - bootstrap}
Let $(X_i)$ be a sequence of i.i.d.\ real-valued random variables on some probability space $(\Omega,{\cal F},\pr)$. Denote by $F$ the distribution function of the $X_i$, and assume that $F$ is continuous at ${\cal R}_\alpha(F)$ and that $\int\phi^2dF<\infty$ for some continuous function $\phi:\R\rightarrow[1,\infty)$ satisfying $\int1/\phi\,d\ell<\infty$ (in particular $F\in\F_1$). Let $\widehat F_n$ be the empirical distribution function of $X_1,\ldots,X_n$ as defined in (\ref{def edf}). Let $(W_{ni})$ be a triangular array of nonnegative real-valued random variables on another probability space $(\Omega',{\cal F}',\pr')$. Set $(\overline\Omega,\overline{\cal F},\overline\pr):=(\Omega\times\Omega',{\cal F}\otimes{\cal F}',\pr\otimes\pr')$ and define the map $\widehat F_n^*:\overline\Omega\rightarrow\F_1$ by
\begin{equation}\label{bootstrap results of VdV-W - 10}
    \widehat F_n^*(\omega,\omega')\,:=\,\frac{1}{n}\sum_{i=1}^n W_{ni} (\omega')\,\eins_{[X_i(\omega),\infty)}.
\end{equation}
Finally assume that one of the following two settings is met:
\begin{itemize}
    \item[(a)] {\em (Efron's bootstrap)} The random vector $(W_{n1},\ldots,W_{nn})$ is multinomially distributed according to the parameters $n$ and $p_1=\cdots=p_n=\frac{1}{n}$ for every $n\in\N$.
    \item[(b)] {\em (Bayesian bootstrap)} $W_{ni}=Y_i/\overline{Y}_n$ for every $n\in\N$ and $i=1,\ldots,n$, where $\overline{Y}_n:=\frac{1}{n}\sum_{j=1}^nY_j$ and $(Y_j)$ is any sequence of nonnegative i.i.d.\ random variables on $(\Omega',{\cal F}',\pr')$ with distribution $\mu$ which satisfies $\int_0^\infty{\mu}[(x,\infty)]^{1/2}\,dx<\infty$ and whose standard deviation coincides with its mean and is strictly positive.
\end{itemize}
Then
\begin{equation}\label{bootstrap results of VdV-W - 20}
    \lim_{n\to\infty}\pr\big[\big\{\omega\in\Omega:\,\varrho_{\scriptsize{\rm BL}}\big(\pr'_{\sqrt{n}({\cal R}_\alpha(\widehat F_n^{*}(\omega,\cdot))-{\cal R}_\alpha(\widehat F_n(\omega)))},{\rm N}_{0,s^2}\big)\ge\delta\big\}\big]=\,0\quad\mbox{ for all }\delta>0,
\end{equation}
where $s^2=s_{\alpha,F}^2$ is given by (\ref{main theorem coroll - eq - 40}) (with $C_F(t_0,t_1)=F(t_0\wedge t_1)(1-F(t_0\vee t_1))$).
\end{theorem}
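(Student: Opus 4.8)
The plan is to obtain (\ref{bootstrap results of VdV-W - 20}) from the functional delta-method for the bootstrap, Theorem \ref{delta method for the bootstrap}(ii), combined with a bootstrap version of the central limit theorem that underlies Theorem \ref{main theorem coroll}. The differentiability input is already available: by Theorem \ref{main theorem}, and since $F$ is continuous at ${\cal R}_\alpha(F)$, also by Example \ref{main theorem - example}, the functional ${\cal R}_\alpha$ is quasi-Hadamard differentiable at $F$ tangentially to $\D_\phi\langle\D_\phi\rangle$ with linear and continuous derivative $\dot{\cal R}_{\alpha;F}$ given by (\ref{def of limit of emp process}). The choice of the weighted space $\D_\phi$ is dictated by the hypothesis $\int\phi^2\,dF<\infty$ together with $\int1/\phi\,d\ell<\infty$: the latter gives $\D_\phi\subseteq\bL_1$ and makes $\|\cdot\|_\phi$ stricter than $\|\cdot\|_{1,\ell}$ (cf.\ (\ref{main theorem - corollary - proof - 10})), so that Remark \ref{main theorem - remark} applies, while the former is exactly the integrability needed for a Gaussian limit of the $\phi$-weighted empirical process.

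First I would establish the bootstrap central limit theorem in $(\D_\phi,\|\cdot\|_\phi)$, namely that the conditional law of the rescaled bootstrapped empirical process $\sqrt{n}(\widehat F_n^*-\widehat F_n)=\frac{1}{n}\sum_{i=1}^n(W_{ni}-1)\eins_{[X_i,\infty)}$ given the data converges, in the bounded-Lipschitz-in-probability sense of the left-hand side of (\ref{bootstrap results of VdV-W - 20}), to the same centered Gaussian limit $B_F$ appearing in the i.i.d.\ specialization of Theorem \ref{CLT by Dede}. For both weight schemes this is a weighted/exchangeable bootstrap CLT for the empirical process in $\D_\phi$; such results are available under a second-moment-type condition on $F$ and suitable moment conditions on the weights (see, e.g., Examples 4.4--4.5 in \cite{BeutnerZaehle2015} and the references therein). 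The condition $\int\phi^2\,dF<\infty$ controls the $\phi$-weighted tails of the process uniformly over the resampling, the multinomial structure in (a) and the requirement $\int_0^\infty\mu[(x,\infty)]^{1/2}\,dx<\infty$ in (b) control the weight contribution, and the standardization of the Bayesian weights (standard deviation equal to the mean) guarantees that the limiting covariance is exactly that of $B_F$ rather than a scalar multiple of it.

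Granting this, the functional delta-method for the bootstrap (Theorem \ref{delta method for the bootstrap}(ii)) transports the conditional weak convergence of $\sqrt{n}(\widehat F_n^*-\widehat F_n)$ to $B_F$ into the conditional weak convergence of $\sqrt{n}({\cal R}_\alpha(\widehat F_n^*)-{\cal R}_\alpha(\widehat F_n))$ to $\dot{\cal R}_{\alpha;F}(B_F)$, in precisely the bounded-Lipschitz-in-probability form of (\ref{bootstrap results of VdV-W - 20}). It then remains to identify $\dot{\cal R}_{\alpha;F}(B_F)$ as ${\rm N}_{0,s^2}$, and this is done exactly as in the proof of Theorem \ref{main theorem coroll}: since $\dot{\cal R}_{\alpha;F}(B_F)=-\int f_{\alpha,F}(x)\,B_F(x)\,\ell(dx)$ with $f_{\alpha,F}$ from (\ref{main theorem coroll - eq - 50}) a continuous linear functional, and since $B_F$ is centered Gaussian, the image is centered normal with variance $\Phi_{B_F}(f_{\alpha,F},f_{\alpha,F})$, which in the i.i.d.\ case equals $s^2=s^2_{\alpha,F}$ of (\ref{main theorem coroll - eq - 40}) with $C_F(t_0,t_1)=F(t_0\wedge t_1)(1-F(t_0\vee t_1))$, the dependence double sum vanishing.

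The hard part will be the bootstrap CLT in the weighted space $\D_\phi$, and in particular treating the two weight schemes on the same footing. Efron's multinomial weights and the normalized Bayesian ratios $Y_i/\overline{Y}_n$ are of different form, and the random normalization $\overline{Y}_n$ in (b) requires an extra reduction --- a law of large numbers for $\overline{Y}_n$ plus a linearization of the normalization --- to pass from the centered weights $Y_i-\overline{Y}_n$ to an unnormalized exchangeably weighted process with the correct limiting covariance. Verifying the requisite $\phi$-weighted tightness and finite-dimensional convergence under the stated moment conditions on $F$ and on $\mu$ is where essentially all the work lies; once $B_F$ is identified as the conditional limit, the delta-method and the variance computation are immediate from Theorem \ref{main theorem} and the calculation already carried out for Theorem \ref{main theorem coroll}.
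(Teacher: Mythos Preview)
Your approach is correct and essentially matches the paper's: apply the bootstrap functional delta-method (Theorem \ref{delta method for the bootstrap}(ii)) using the quasi-Hadamard differentiability of ${\cal R}_\alpha$ together with a bootstrap CLT for the empirical process, and then identify $\dot{\cal R}_{\alpha;F}(B_F)\sim{\rm N}_{0,s^2}$ exactly as in the proof of Theorem \ref{main theorem coroll}. The only cosmetic difference is that you run the delta-method directly in $(\D_\phi,\|\cdot\|_\phi)$ via Example \ref{main theorem - example}, whereas the paper runs it in $(\bL_1,\|\cdot\|_{1,\ell})$ and absorbs the $\D_\phi$-work into its auxiliary Theorem \ref{bootstrap results of VdV-W}, whose proof simply cites Theorem 5.2 of \cite{BeutnerZaehle2015} for the $\D_\phi$-bootstrap CLT under the stated weight schemes and then uses the continuous embedding $\D_\phi\hookrightarrow\bL_1$ --- so the ``hard part'' you flag is in fact already available off the shelf and need not be reproved.
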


\begin{proof}
First of all note that ${\cal R}_\alpha(\widehat F_n^*)$ may be verified to be $({\cal F},{\cal B}(\R))$-measurable in a similar way like ${\cal R}_\alpha(\widehat F_n)$. Theorem \ref{main theorem} shows that ${\cal R}_\alpha$ is quasi-Hadamard differentiable at $F$ tangentially to $\bL_1\langle\bL_1\rangle$ (w.r.t.\ the norm $\|\cdot\|_{1,\ell}$) with linear quasi-Hadamard derivative $\dot{\cal R}_{\alpha;F}$ given by (\ref{def of limit of emp process}). The functional delta-method in the form of Theorem \ref{delta method for the bootstrap}(ii) along with Theorems \ref{main theorem coroll} and \ref{bootstrap results of VdV-W} then implies that (\ref{bootstrap results of VdV-W - 20}) with ${\rm N}_{0,s^2}$ replaced by the law of $\dot{\cal R}_{\alpha;F}(B_F)$ holds, where $B_F$ is an $\bL_1$-valued centered Gaussian random variable with covariance operator $\Phi_{B_F}$ given by (\ref{CLT by Dede - eq}) (with $C_F(t_0,t_1)=F(t_0\wedge t_1)(1-F(t_0\vee t_1))$). As in the proof of Theorem \ref{main theorem coroll} we obtain $\dot{\cal R}_{\alpha;F}(B_F)\sim{\rm N}_{0,s^2}$. For the application of Theorem \ref{main theorem coroll} note that $\int\sqrt{F(1-F)}\,d\ell<\infty$ is ensured by the assumption $\int\phi^2dF<\infty$; cf.\ Remark \ref{example for cond of main theorem coroll}(i).
\end{proof}

We now turn to the case where the observations $X_1,X_2,\ldots$ may be dependent. We focus on the so-called circular bootstrap \cite{PolitisRomano1992,Radulovic1996}, which is only a slight modification of the moving blocks bootstrap \cite{Buehlmann1994,Kuensch1989,LiuSingh1992,NaikNimbalkarRajarshi1994}. To ensure that in the following $\widehat F_n^*$ is the distribution function of a {\em probability} measure, we assume that $n$ ranges only over $\N_m:=\{m^k:k=1,2,\ldots\}$ for some arbitrarily fixed integer $m\geq 2$. Let $(\ell_n)$ be a sequence in $\N$ such that $\ell_n<n$ is a divisor of $n$ and $\ell_n\nearrow\infty$ as $n\rightarrow\infty$, and set $k_n:=n/\ell_n$. Let $(I_{nj})_{n\in\N,\,1\le j\le k_n}$ be a triangular array of random variables on another probability space $(\Omega',{\cal F}',\pr')$ such that $I_{n1},\ldots,I_{nk_n}$ are i.i.d.\ according to the uniform distribution on $\{1,\ldots,n\}$ for every $n\in\N$. Set $(\overline\Omega,\overline{\cal F},\overline\pr):=(\Omega\times\Omega',{\cal F}\otimes{\cal F}',\pr\otimes\pr')$ and define the map $\widehat F_{n}^*:\overline\Omega\rightarrow\F_1$ by (\ref{bootstrap results of VdV-W - 10})
with
\begin{equation}\label{example for tau - beta mixing - 2}
    W_{ni}(\omega')\, := \, \sum_{j=1}^{k_n}\Big(\eins_{\{I_{nj}\le i\le (I_{nj}+\ell_n-1)\wedge n\}}(\omega')+\eins_{\{I_{nj}+\ell_n-1 > n,\,1 \le i \le I_{nj}+\ell_n-1-n\}}(\omega')\Big).
\end{equation}
At an informal level this means that given a sample $X_1,\ldots,X_{n}$, we pick $k_n$ blocks of length $\ell_n$ in the (artificially) extended sample $X_1,\ldots,X_{n},X_{n+1},\ldots,X_{n}+\ell_n-1$ (with $X_{n+i}:=X_i$, $i=1,\ldots,\ell_n-1$) where the start indices $I_{n1},I_{n2},\ldots,I_{nk_n}$ are chosen independently and uniformly in the set of all indices $\{1,\ldots,n\}$.
%\begin{center}
%\begin{tabular}{ll}
%    block $1$: \qquad & $X_{I_{n1}},X_{I_{n1}+1},\ldots,X_{I_{n1}+\ell_n-1}$\\
%    block $2$: \qquad & $X_{I_{n2}},X_{I_{n2}+1},\ldots,X_{I_{n2}+\ell_n-1}$\\
%    & $\vdots$\\
%    block $k_n$: \qquad & $X_{I_{nk_n}},X_{I_{nk_n}+1},\ldots,X_{I_{nk_n1}+\ell_n-1}$
%\end{tabular}
%\end{center}
The bootstrapped empirical distribution function $\widehat F_{n}^*$ is then defined to be the distribution function of the discrete %(not necessarily)
probability measure with atoms $X_1,\ldots,X_{n}$ carrying masses $W_{n1},\ldots,W_{nn}$ respectively, where $W_{ni}$ specifies the number of blocks which contain $X_i$.

\begin{theorem}{\em (Circular bootstrap)}\label{bootstrap results of Radulovic}
Let $(X_i)$ be a strictly stationary sequence of real-valued random variables on some probability space $(\Omega,{\cal F},\pr)$. Denote by $F$ the distribution function of the $X_i$, and assume that $F$ is continuous at ${\cal R}_\alpha(F)$ and that $\int|x|^p\,dF(x)<\infty$ for some $p>2$ (in particular $F\in\F_1$). Assume that $(X_i)$ is $\beta$-mixing with mixing coefficients $\beta(i)={\cal O}(i^{-b})$ for some $b>p/(p-2)$. Let $\widehat F_n$ be the empirical distribution function of $X_1,\ldots,X_n$ as defined in (\ref{def edf}). Let $(\ell_n)$ and $(k_n)$ be as above and assume that $\ell_n={\cal O}(n^{\gamma})$ for some $\gamma\in(0,(p-2)/(2(p-1)))$. Let $(\overline\Omega,\overline{\cal F},\overline\pr)$ be as above and the map $\widehat F_{n}^*:\overline\Omega\rightarrow\F_1$ be given by (\ref{bootstrap results of VdV-W - 10}) and (\ref{example for tau - beta mixing - 2}). Then
$$%\begin{equation}\label{bootstrap results of VdV-W - 30}
    \lim_{n\to\infty}\pr\big[\big\{\omega\in\Omega:\,\varrho_{\scriptsize{\rm BL}}\big(\pr'_{\sqrt{n}({\cal R}_\alpha(\widehat F_{n}^{*}(\omega,\cdot))-{\cal R}_\alpha(\widehat F_{n}(\omega)))},{\rm N}_{0,s^2}\big)\ge\delta\big\}\big]=\,0\quad\mbox{ for all }\delta>0,
$$%\end{equation}
where $s^2=s_{\alpha,F}^2$ is given by (\ref{main theorem coroll - eq - 40}).
\end{theorem}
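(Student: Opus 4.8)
The plan is to follow exactly the architecture of the proof of Theorem \ref{main theorem coroll - bootstrap}, replacing the i.i.d.\ bootstrap central limit theorem used there by a circular-bootstrap counterpart valid under $\beta$-mixing. As a preliminary step one checks that $\cR_\alpha(\widehat F_n^*)$ is $(\overline{\cal F},\cB(\R))$-measurable, exactly as $\cR_\alpha(\widehat F_n)$ and $\cR_\alpha(\widehat F_n^*)$ were in the i.i.d.\ case: the map $(x_1,\dots,x_n,w_1,\dots,w_n)\mapsto\frac1n\sum_{i=1}^n w_i\,\eins_{[x_i,\infty)}$ is $d_{\mbox{\scriptsize{\rm W}},1}$-continuous into $\F_1$, and $\cR_\alpha$ is $d_{\mbox{\scriptsize{\rm W}},1}$-continuous by Theorem \ref{1 weak continuity of R alpha}. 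Theorem \ref{main theorem} then provides the quasi-Hadamard differentiability of $\cR_\alpha$ at $F$ tangentially to $\bL_1\langle\bL_1\rangle$ with linear derivative $\dot\cR_{\alpha;F}$ from (\ref{def of limit of emp process}), so that the functional delta-method for the bootstrap in the form of Theorem \ref{delta method for the bootstrap}(ii) becomes applicable as soon as the two required distributional limits for the underlying empirical process are established.

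The first limit is the unconditional statement $\sqrt{n}(\widehat F_n-F)\leadsto B_F$ in $(\bL_1,\|\cdot\|_{1,\ell})$, where $B_F$ is the centered Gaussian element whose covariance (\ref{CLT by Dede - eq}) corresponds to the full $C_F$ in (\ref{main theorem coroll - eq - 60}). I would derive this from Theorem \ref{main theorem coroll} after checking its hypotheses under the present assumptions: the bound $\int|x|^p\,dF<\infty$ with $p>2$ yields $\int\sqrt{F(1-F)}\,d\ell<\infty$ through Remark \ref{example for cond of main theorem coroll}(i), while $\widetilde\alpha(n)\le\alpha(n)\le\beta(n)={\cal O}(n^{-b})$ with $b>p/(p-2)>1$ makes condition (\ref{main theorem coroll - eq - 20}) hold by Remark \ref{example for cond of main theorem coroll}(v). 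The second limit is the conditional one: for the given resampling scheme, the circular-bootstrap empirical process $\sqrt{n}(\widehat F_n^*-\widehat F_n)$ converges, conditionally on the data and in (outer) $\pr$-probability, to the same $B_F$ in $(\bL_1,\|\cdot\|_{1,\ell})$ (the $\bL_1$-valued conditional-weak convergence required by Theorem \ref{delta method for the bootstrap}(ii)). Granting both limits, Theorem \ref{delta method for the bootstrap}(ii) pushes them through $\dot\cR_{\alpha;F}$ and delivers, verbatim as in the proof of Theorem \ref{main theorem coroll - bootstrap}, the asserted convergence with ${\rm N}_{0,s^2}$ replaced by the law of $\dot\cR_{\alpha;F}(B_F)$; the final computation in the proof of Theorem \ref{main theorem coroll} then identifies $\dot\cR_{\alpha;F}(B_F)\sim{\rm N}_{0,s^2}$ with $s^2$ as in (\ref{main theorem coroll - eq - 40}).

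The main obstacle is the conditional circular-bootstrap central limit theorem in the Wasserstein-$1$ space, that is, the $\beta$-mixing, block-resampling analogue of Theorem \ref{bootstrap results of VdV-W}. The classical moving-blocks and circular-bootstrap invariance principles \cite{Kuensch1989,LiuSingh1992,PolitisRomano1992,Buehlmann1994,NaikNimbalkarRajarshi1994,Radulovic1996} are stated for the process in the uniform metric on $\ell^\infty$ or the Skorokhod space $D[-\infty,\infty]$, and the real work is to strengthen such a result to convergence in $\|\cdot\|_{1,\ell}$. Concretely I would write $\|\widehat F_n^*-\widehat F_n\|_{1,\ell}$ as a tail integral, cut it at a truncation level $T\to\infty$, and treat the two pieces separately: on the central region $\{|x|\le T\}$ the uniform-metric block-bootstrap CLT together with $\int\sqrt{F(1-F)}\,d\ell<\infty$ yields the finite-dimensional convergence and the needed tightness, whereas the contribution of $\{|x|>T\}$ is controlled by bounding the conditional second moment of $\sqrt{n}\int_{|x|>T}|\widehat F_n^*-\widehat F_n|\,\ell(dx)$. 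It is exactly here that the three quantitative hypotheses are consumed: the finite $p$-th moment with $p>2$ controls the magnitude of the resampled tail sums, the mixing rate $\beta(i)={\cal O}(i^{-b})$ with $b>p/(p-2)$ renders the block-sum variances summable, and the block-length restriction $\ell_n={\cal O}(n^\gamma)$ with $\gamma<(p-2)/(2(p-1))$ keeps both the blocking bias and the variance inflation negligible at the $\sqrt n$-scale. Once this tail term is shown to vanish in bootstrap probability with $\pr$-probability tending to one, the two regions combine to give the conditional $\bL_1$-convergence to $B_F$, closing the argument.
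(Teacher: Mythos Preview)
Your overall architecture matches the paper's exactly: measurability, quasi-Hadamard differentiability from Theorem \ref{main theorem}, the functional delta-method of Theorem \ref{delta method for the bootstrap}(ii), verification of the hypotheses of Theorem \ref{main theorem coroll} via the moment and mixing assumptions (the paper also notes explicitly that $\beta$-mixing implies ergodicity), and the identification $\dot\cR_{\alpha;F}(B_F)\sim{\rm N}_{0,s^2}$.

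The one genuine divergence is in how you obtain the conditional $\bL_1$-bootstrap CLT. You propose to start from the classical uniform-metric block-bootstrap invariance principles and pass to $\|\cdot\|_{1,\ell}$ by a truncation-plus-tail-moment argument, spending the moment, mixing, and block-length conditions to kill the contribution from $\{|x|>T\}$. The paper avoids this entirely: it invokes Theorem \ref{bootstrap results of Radulovic EP}, whose proof simply quotes Theorem 5.4 of \cite{BeutnerZaehle2015} to get the circular-bootstrap CLT already in the \emph{weighted sup-norm} space $(\D_\phi,\|\cdot\|_\phi)$ (with a suitable weight $\phi$ admissible under $\int|x|^p\,dF<\infty$), and then transports it to $(\bL_1,\|\cdot\|_{1,\ell})$ through the continuous embedding $\D_\phi\hookrightarrow\bL_1$ of Example \ref{main theorem - example} and the continuous mapping theorem. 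Your route is in principle workable and makes transparent where each quantitative hypothesis is consumed, but it re-derives from scratch what the paper gets for free from a single citation; the paper's route is shorter and sidesteps any direct tightness or tail-variance computation in $\bL_1$.
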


\begin{proof}
One can argue as in the proof of Theorem \ref{main theorem coroll - bootstrap} (with Theorem \ref{bootstrap results of Radulovic EP} in place of Theorem \ref{bootstrap results of VdV-W}). Take into account that $\beta$-mixing implies ergodicity (see Section 2.5 in \cite{Bradley2005}) and that $\int|x|^p\,dF(x)<\infty$ and $\beta(i)={\cal O}(i^{-b})$ (with $p>2$ and $b>p/(p-2)>1$) imply $\int\sqrt{F(1-F)}\,d\ell<\infty$ and (\ref{main theorem coroll - eq - 20}); cf.\ Remark \ref{example for cond of main theorem coroll} (i) and (iv).
\end{proof}

\subsection{Qualitative robustness}\label{section qualitative robustness}

Consider the nonparametric statistical infinite product model
$$
    \big(\Omega,{\cal F},\{\pr^\theta:\theta\in\Theta\}\big):=\big(\R^\N,{\cal B}(\R)^{\otimes\N},\{P_F^{\otimes\N}:F\in\F_1\}\big),
$$
where $P_F$ is the Borel probability measure on $\R$ associated with the distribution function $F$. Let $X_i$ be the $i$-th coordinate projection on $\Omega=\R^\N$, and note that $X_1,X_2,\ldots$ are i.i.d.\ with distribution function $F$ under $\pr^F$ for every $F\in\F_1$. Let $\widehat F_n$ be the empirical distribution function of $X_1,\ldots,X_n$ as defined in (\ref{def edf}) and set $\widehat{\cal R}_n:={\cal R}_\alpha(\widehat F_n)$. We will say that the sequence of estimators $(\widehat{\cal R}_n)$ is qualitatively robust on a given set $\bG\subseteq\F_1$ if for every $F\in\bG$ and $\varepsilon>0$ there exists a $\delta>0$ such that for every $n\in\N$
$$%\begin{equation}\label{Robustness - characteristic property}
    G\in\bG,\quad \varrho_{\mbox{\scriptsize{\rm P}}}(P_F,P_G)\le\delta\qquad\Longrightarrow\qquad \varrho_{\mbox{\scriptsize{\rm P}}}(\pr^{F}\circ\widehat{\cal R}_n^{-1},\pr^{G}\circ\widehat{\cal R}_n^{-1})\le\varepsilon,
$$%\end{equation}
where $\varrho_{\mbox{\scriptsize{\rm P}}}$ refers to the Prohorov metric on the set of all Borel probability measures on $\R$. Theorem \ref{thm qualitative robustness} ahead shows that the sequence of estimators $(\widehat{\cal R}_n)$ is qualitatively robust on every so-called w-sets in $\F_1$. Following \cite{Kraetschmeretal2015}, we say that a subset $\bG\subseteq\F_1$ is a w-set in $\F_1$ if the relative $1$-weak topology and the relative weak topology coincide on $\bG$. Several characterizations and examples of w-sets in $\F_1$ have been worked out in \cite{Kraetschmeretal2015}. The examples include the class of distribution functions of all normal distributions, the class of distribution functions of all Gamma distributions with location parameter $0$, the class of distributions functions of all Pareto distributions on $[\overline{c},\infty)$ with shape parameter $a\in[\alpha,\infty)$ for any fixed $\alpha>1$ and $\overline{c}>0$, and others.

The following theorem is an immediate consequence of Theorem 3.8 and Lemma 3.4 in \cite{Zaehle2016} and Theorems \ref{1 weak continuity of R alpha} and \ref{Consistency thm}.

\begin{theorem}\label{thm qualitative robustness}
The sequence of estimators $(\widehat{\cal R}_n)$ is qualitatively robust on every w-set in $\F_1$.
\end{theorem}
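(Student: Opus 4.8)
The plan is to invoke the Hampel-type sufficient condition for qualitative robustness furnished by Theorem 3.8 in \cite{Zaehle2016}, together with its auxiliary Lemma 3.4. The decisive hypotheses of that theorem are, on the one hand, that the statistical functional in question be continuous on the set $\bG$ with respect to the topology metrized by the Prohorov metric $\varrho_{\mbox{\scriptsize{\rm P}}}$ (that is, the relative weak topology), and, on the other hand, that the associated plug-in estimators be consistent; Lemma 3.4 then supplies the technical step that converts continuity plus consistency into the equicontinuity of the family $\{\pr^F\circ\widehat{\cal R}_n^{-1}\}_{n\in\N}$ along $\bG$ which is tantamount to qualitative robustness. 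Accordingly, it suffices to check these two ingredients for ${\cal R}_\alpha$ and $\widehat{\cal R}_n={\cal R}_\alpha(\widehat F_n)$ on an arbitrarily fixed w-set $\bG\subseteq\F_1$.

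For the continuity ingredient I would argue as follows. By Theorem \ref{1 weak continuity of R alpha}, the functional ${\cal R}_\alpha:\F_1\to\R$ is continuous for the $1$-weak topology, hence also for the relative $1$-weak topology on $\bG$. Since $\bG$ is a w-set, the relative $1$-weak topology and the relative weak topology coincide on $\bG$ by definition, so the restriction ${\cal R}_\alpha|_\bG$ is continuous for the relative weak topology on $\bG$. Because the Prohorov metric metrizes exactly this weak topology, the continuity requirement of Theorem 3.8 in \cite{Zaehle2016} is met.

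For the consistency ingredient I would invoke Theorem \ref{Consistency thm}, which applies here since an i.i.d.\ sequence is in particular strictly stationary and ergodic: it gives ${\cal R}_\alpha(\widehat F_n)\to{\cal R}_\alpha(F)$ $\pr^F$-a.s.\ for every $F\in\F_1$, and a fortiori for every $F\in\bG$. Feeding the weak continuity of ${\cal R}_\alpha$ on $\bG$ together with this strong consistency into Theorem 3.8 and Lemma 3.4 of \cite{Zaehle2016} then yields qualitative robustness of $(\widehat{\cal R}_n)$ on $\bG$; as $\bG$ was an arbitrary w-set, the assertion follows.

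The one genuinely substantive point — the rest being bookkeeping — is the mismatch of topologies. The functional ${\cal R}_\alpha$ is, because $U_\alpha$ is unbounded, in general only $1$-weakly continuous and \emph{not} continuous for the weak topology on all of $\F_1$, whereas qualitative robustness is measured through $\varrho_{\mbox{\scriptsize{\rm P}}}$, which metrizes the coarser weak topology. The w-set hypothesis is precisely what bridges this gap: on a set where the two topologies agree, $1$-weak continuity upgrades to weak continuity, after which the classical Hampel-type reasoning encapsulated in \cite{Zaehle2016} applies without further ado.
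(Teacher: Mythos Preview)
Your argument is correct and matches the paper's approach exactly: the paper states that the theorem is an immediate consequence of Theorem 3.8 and Lemma 3.4 in \cite{Zaehle2016} combined with Theorems \ref{1 weak continuity of R alpha} and \ref{Consistency thm}, and your proposal spells out precisely this chain of implications, including the role of the w-set hypothesis in reconciling the $1$-weak and weak topologies.
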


%%%%%%%%%%%%%%%%%%%%%%%%%%%%%%%%%%%%%%%%%%%%%%%%%%%%%%%%%%%%%%%%%
%%%%%%%%%%%%%%%%%%%%%%%%%%%%%%%%%%%%%%%%%%%%%%%%%%%%%%%%%%%%%%%%%
%%%%%%%%%%%%%%%%%%%%%%%%%%%%%%%%%%%%%%%%%%%%%%%%%%%%%%%%%%%%%%%%%

\subsection{Comparison to other empirical risk measures}

As already mentioned in the introduction, the expectile-based risk measure $\rho_{\alpha}$ has recently attracted some attention as a tool of quantitative risk management. Already established alternatives are the Value at Risk ${\rm V@R}_{\alpha}(X):=F_{X}^{\leftarrow}(\alpha)$ and the Average Value at Risk ${\rm AV@R}_{\alpha}(X):=\frac{1}{1-\alpha}\int_{(\alpha,1)}F_{X}^{\leftarrow}(s)\,\ell(ds)$ at level $\alpha\in (0,1)$,
where $F_{X}^{\leftarrow}$ denotes the left-continuous quantile function of the distribution function $F_{X}$ of $X$. Whereas ${\rm V@R}_{\alpha}$ may be evaluated at any random variable on the underlying probability space, ${\rm AV@R}_{\alpha}$ is restricted to $L^1$-random variables just as $\rho_\alpha$. Since they both are law-invariant, they may be associated with statistical functionals in the same way as expectile-based risk measures. The corresponding nonparametric empirical estimators can be obtained by evaluating these functionals at the empirical distribution function as defined in (\ref{def edf}). In Table \ref{Table 1} below these estimators are compared to the empirical expectile-based risk measure. To keep the discussion tight, we shall restrict considerations to i.i.d.\ samples.

Let $\F_0$ be the set of all distribution functions on $\R$, $\F_2$ be the subset of all $F\in\F_0$ satisfying $\int x^2\,dF(x)<\infty$, and $\F_0^\alpha$ be the subset of all $F\in\F_0$ having a unique $\alpha$-quantile. Moreover let $F_2^{\alpha;{\rm{\scriptsize{e}}}}$ be the sets of all $F\in\F_2$ being continuous at ${\cal R}_\alpha(F)$, and $F_2^{\alpha;{\rm{\scriptsize{a}}}}$ be the analogue for the Average Value at Risk at level $\alpha$. The first two columns of the first two lines of Table \ref{Table 1} can be derived by means of the classical theory of L-statistics as presented in \cite{Serfling1980,van der Vaart 1998}. It is moreover known from \cite{FeldmanTucker1966} that strong consistency of the empirical $\alpha$-quantile ($\vatr_\alpha$) does not hold for $F\in\F_0\setminus\F_0^\alpha$. The third column of the first two lines is known from the results of \cite{HolzmannKlar2015}; see also our elaborations above. The results of \cite{HolzmannKlar2015} moreover show that asymptotic normality of the empirical $\alpha$-expectile cannot be obtained for $F\in\F_2\setminus\F_2^{\alpha;{\rm{\scriptsize{e}}}}$. The first and the second column of the third line are known from the discussion at the end of Section 2 in \cite{Kraetschmeretal2015} and Section 4.3 in \cite{Kraetschmeretal2015}, respectively. The third column of the third line is justified by Theorem \ref{thm qualitative robustness} above. It follows by Hampel's theorem that robustness of the empirical $\alpha$-quantile ($\vatr_\alpha$) cannot be obtained on sets larger than $\F_0^\alpha$. On the other hand, it is not clear to us whether or not robustness of the empirical Average Value at Risk and the empirical expectile can be obtained on sets that are not w-sets in $\F_1$.

\begin{table}[hbt]\centering
\begin{tabular}{lccc}
    & $\vatr_\alpha$ & $\avatr_\alpha$ & $\rho_\alpha$ \\
    \hline
    strong consistency & for $F\in\F_0^\alpha$ & for $F\in\F_1$ & for $F\in\F_1$\\
    asymptotic normality & for $F\in\F_0^\alpha$ & for $F\in\F_2^{\alpha;{\rm{\scriptsize{a}}}}$ & for $F\in\F_2^{\alpha;{\rm{\scriptsize{e}}}}$\\
    qualitative robustness & on $\F_0^\alpha$ & on w-sets in $\F_1$ & on w-sets in $\F_1$\\
\end{tabular}
\caption{Comparison of empirical estimators of $\vatr_\alpha$, $\avatr_\alpha$, and $\rho_\alpha$.}
\label{Table 1}
\end{table}

%%%%%%%%%%%%%%%%%%%%%%%%%%%%%%%%%%%%%%%%%%%%%%%%%%%%%%%%%%%%%%%%
%%%%%%%%%%%%%%%%%%%%%%%%%%%%%%%%%%%%%%%%%%%%%%%%%%%%%%%%%%%%%%%%
%%%%%%%%%%%%%%%%%%%%%%%%%%%%%%%%%%%%%%%%%%%%%%%%%%%%%%%%%%%%%%%%
%%%%%%%%%%%%%%%%%%%%%%%%%%%%%%%%%%%%%%%%%%%%%%%%%%%%%%%%%%%%%%%%
%%%%%%%%%%%%%%%%%%%%%%%%%%%%%%%%%%%%%%%%%%%%%%%%%%%%%%%%%%%%%%%%
%%%%%%%%%%%%%%%%%%%%%%%%%%%%%%%%%%%%%%%%%%%%%%%%%%%%%%%%%%%%%%%%

\section{Parametric estimation of ${\cal R}_\alpha(F)$}\label{ParamEstim}

In this section we consider a parametric statistical model $(\Omega,{\cal F},\{\pr^\theta:\theta\in\Theta\})$, where the parameter set $\Theta$ is any topological space. In Section \ref{section asymptotic distribution - param} we will also impose some additional structure on $\Theta$. For every $n\in\N$ we let $\widehat\theta_n:\Omega\to\Theta$ be any map, which should be seen as an estimator for $\theta$. For every $\theta\in\Theta$ we fix a distribution function $F_\theta\in\F_1$, which can be seen as a characteristic derived from the parameter $\theta$. In particular, $\widehat F_n:=F_{\widehat\theta_n}$ can be seen as an estimator for $F_\theta$.

%%%%%%%%%%%%%%%%%%%%%%%%%%%%%%%%%%%%%%%%%%%%%%%%%%%%%%%%%%%%%%%%
%%%%%%%%%%%%%%%%%%%%%%%%%%%%%%%%%%%%%%%%%%%%%%%%%%%%%%%%%%%%%%%%
%%%%%%%%%%%%%%%%%%%%%%%%%%%%%%%%%%%%%%%%%%%%%%%%%%%%%%%%%%%%%%%%

\subsection{Strong consistency}\label{section strong consistency - param}

Here we need no further assumptions on the topological space $\Theta$. The following Theorem \ref{Consistency thm - parametric} is an immediate consequence of Theorem \ref{1 weak continuity of R alpha}.

\begin{theorem}\label{Consistency thm - parametric}
Let $\theta_0\in\Theta$ and assume that the mapping $\theta\mapsto F_\theta$ is $1$-weakly sequentially continuous at $\theta_0$. Moreover assume that $\widehat\theta_n\to\theta_0$ $\pr^{\theta_0}$-a.s. Then, under $\pr^{\theta_0}$, the estimator ${\cal R}_\alpha(F_{\widehat\theta_n})$ is strongly consistent for ${\cal R}_\alpha(F_{\theta_0})$ in the sense that
$$
    {\cal R}_\alpha(F_{\widehat\theta_n})\to{\cal R}_\alpha(F_{\theta_0})\qquad\pr^{\theta_0}\mbox{-a.s.}
$$
\end{theorem}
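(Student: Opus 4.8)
The plan is to realize ${\cal R}_\alpha(F_{\widehat\theta_n})$ as a composition of two $1$-weakly sequentially continuous maps, evaluated along the $\pr^{\theta_0}$-a.s.\ convergent sequence $(\widehat\theta_n)$, and then to transport the convergence through this composition pointwise on a set of full measure. Since almost sure convergence is a statement about a fixed $\omega$, the whole argument can be carried out $\omega$ by $\omega$.

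First I would fix the event $\Omega_0:=\{\omega\in\Omega:\widehat\theta_n(\omega)\to\theta_0\}$, which by hypothesis satisfies $\pr^{\theta_0}[\Omega_0]=1$. For each $\omega\in\Omega_0$ the sequence $\widehat\theta_n(\omega)$ converges to $\theta_0$ in $\Theta$, so by the assumed $1$-weak sequential continuity of the map $\theta\mapsto F_\theta$ at $\theta_0$ we obtain $F_{\widehat\theta_n(\omega)}\to F_{\theta_0}$ in the $1$-weak topology on $\F_1$. Next I would invoke Theorem \ref{1 weak continuity of R alpha}, which asserts that ${\cal R}_\alpha:\F_1\to\R$ is continuous for the $1$-weak topology. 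The point that makes the passage to sequences legitimate is that the $1$-weak topology on $\F_1$ is metrized by the Wasserstein-$1$ metric $d_{\mbox{\scriptsize{\rm W}},1}$, as recalled in Section \ref{section regularity}; on a metrizable space continuity and sequential continuity coincide. Hence ${\cal R}_\alpha$ is $1$-weakly sequentially continuous, and applying it to the convergent sequence from the previous step gives ${\cal R}_\alpha(F_{\widehat\theta_n(\omega)})\to{\cal R}_\alpha(F_{\theta_0})$ for every $\omega\in\Omega_0$. As $\pr^{\theta_0}[\Omega_0]=1$, this is exactly the claimed strong consistency.

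I do not expect any genuine obstacle: the statement is a routine composition of (sequentially) continuous maps along an almost surely convergent sequence. The only two points deserving a word of care are (i) the replacement of topological continuity of ${\cal R}_\alpha$ by sequential continuity, which is licensed by the metrizability of the $1$-weak topology via $d_{\mbox{\scriptsize{\rm W}},1}$, and (ii) the tacit measurability of $\omega\mapsto{\cal R}_\alpha(F_{\widehat\theta_n(\omega)})$, which is needed merely to speak of an estimator and of a.s.\ convergence. The latter holds once $\widehat\theta_n$ is measurable and the maps $\theta\mapsto F_\theta$ and ${\cal R}_\alpha$ are Borel measurable; for ${\cal R}_\alpha$ this already follows from its $d_{\mbox{\scriptsize{\rm W}},1}$-continuity, and it is in any case implicit in the assumption that the $\widehat\theta_n$ are estimators. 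This is why the result is, as stated, an immediate consequence of Theorem \ref{1 weak continuity of R alpha}.
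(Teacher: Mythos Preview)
Your proposal is correct and matches the paper's approach exactly: the paper simply records the theorem as ``an immediate consequence of Theorem~\ref{1 weak continuity of R alpha},'' which is precisely the composition-of-continuous-maps argument you spell out. One minor remark: the paper explicitly notes after the statement that strong consistency is used here as a purely analytical property without any measurability requirement on ${\cal R}_\alpha(F_{\widehat\theta_n})$, so your side discussion of measurability, while harmless, is not needed for the argument to go through.
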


Note that in Theorem \ref{Consistency thm  - parametric} %, with a little abuse of convention,
the concept of strong consistency is used as a purely analytical property of the sequence $({\cal R}_\alpha(\widehat F_n))$ without further measurability condition on %the mappings
${\cal R}_\alpha(\widehat F_n)$. Example \ref{Example Consitency Parametric - Lognormal} will illustrate the conditions of Theorem  \ref{Consistency thm - parametric}.

\begin{remarknorm}\label{remark on Consistency thm - parametric}
Recall from Lemma 3.4 in \cite{Kraetschmeretal2012} that a sequence $(F_n)\subseteq\F_1$ converges $1$-weakly to some $F_0\in\F_1$ if and only if $\int f\,dF_n\to\int f\,dF_0$ for all $f\in{\cal C}_1$. Thus the mapping $\theta\mapsto F_\theta$ is $1$-weakly sequentially continuous at $\theta_0$ if and only if for every sequence $(\theta_n)\subseteq\Theta$ with $\theta_n\rightarrow\theta_0$ we have $\int f\,dF_{\theta_n}\rightarrow\int f\,dF_{\theta_0}$ for all $f\in{\cal C}_1$.
{\hspace*{\fill}$\Diamond$\par\bigskip}
\end{remarknorm}

%\begin{examplenorm}\label{Example Consitency Parametric - Lognormal}
%Let $\Theta:=\R\times (0,\infty)$ and $F_{(m,s^2)}$ be the distribution function of the log-normal distribution ${\rm LN}_{(m,s^2)}$ with parameters $(m,s^2)\in\Theta$. Recall that ${\rm LN}_{(m,s^2)}$ possesses the Lebesgue density
%$$
%    f_{(m,s^2)}(x)
%    :=
%    \left\{
%    \begin{array}{lll}
%        (2\pi s^2)^{-1/2}\,x^{-1}\,e^{-\{\log(x)-m\}^2/\{2s^2\}} & , & x> 0\\
%        0 & , & x\le 0
%    \end{array}
%    \right..
%$$
%
%It is shown in Section \ref{Proof of Example Consitency Parametric - Lognormal} ahead that the mapping $(m,s^2)\mapsto F_{(m,s^2)}$ is $1$-weakly sequentially continuous at every $(m_0,s_0^2)\in\Theta$. Further, in the corresponding infinite statistical product model $(\R^\N,{\cal B}(\R)^{\otimes\N},\{{\rm LN}_{(m,s^2)}:(m,s^2)\in\Theta\})$ the unique maximum likelihood estimator is given by
%\begin{equation}\label{Example Consitency Parametric - Lognormal - EQ}
%    \widehat m_n(x_1\,x_2,\ldots):=\frac{1}{n}\sum_{i=1}^n\log(x_i),\quad \widehat s_n^2(x_1\,x_2,\ldots):=\frac{1}{n}\sum_{i=1}^n\big(\log(x_i)-\widehat m_n(x_1\,x_2,\ldots)\big)^2
%\end{equation}
%and easily shown to be strongly consistent by using the classical strong law of large numbers.
%{\hspace*{\fill}$\Diamond$\par\bigskip}
%\end{examplenorm}

\begin{examplenorm}\label{Example Consitency Parametric - Lognormal}
Let $\Theta:=\R\times (0,\infty)$ and $F_{(m,s^2)}$ be the distribution function of the log-normal distribution ${\rm LN}_{(m,s^2)}$ with parameters $(m,s^2)\in\Theta$. Recall that ${\rm LN}_{(m,s^2)}$ possesses the Lebesgue density
$$
    f_{(m,s^2)}(x)
    :=
    \left\{
    \begin{array}{lll}
        (2\pi s^2)^{-1/2}\,x^{-1}\,e^{-\{\log(x)-m\}^2/\{2s^2\}} & , & x> 0\\
        0 & , & x\le 0
    \end{array}
    \right..
$$
It is shown in Section \ref{Proof of Example Consitency Parametric - Lognormal} ahead that the mapping $(m,s^2)\mapsto F_{(m,s^2)}$ is $1$-weakly sequentially continuous at every $(m_0,s_0^2)\in\Theta$. Further, in the corresponding infinite statistical product model $(\R^\N,{\cal B}(\R)^{\otimes\N},\{{\rm LN}_{(m,s^2)}:(m,s^2)\in\Theta\})$ a maximum likelihood estimator $(\widehat m_n,\widehat s_n^2)$ for $(m,s^2)$ is given by \begin{eqnarray}\label{Example Consitency Parametric - Lognormal - EQ}
    \widehat m_n(x_1\,x_2,\ldots)
    & := &
    \left\{
    \begin{array}{lll}
        \frac{1}{n}\sum_{i=1}^n\log(x_i) & , & \min_{i = 1,\dots,n}x_{i} > 0\\
        \overline{m} & , & \min_{i = 1,\dots,n}x_{i}\le 0
    \end{array}
    \right.,\\
    \widehat s_n^2(x_1\,x_2,\ldots)
    & := &
        \left\{
    \begin{array}{lll}
        \frac{1}{n}\sum_{i=1}^n\big(\log(x_i)-\widehat m_n(x_1\,x_2,\ldots)\big)^2 & , & \min_{i = 1,\dots,n}x_{i} > 0\\
        \overline{s}^2 & , & \min_{i = 1,\dots,n}x_{i} \le 0
    \end{array}
    \right.\nonumber
\end{eqnarray}
for any fixed $\overline{m}\in\R$ and $\overline{s}^2>0$. By using the classical strong law of large numbers, $(\widehat m_n,\widehat s_n^2)$ is easily shown to be strongly consistent.
{\hspace*{\fill}$\Diamond$\par\bigskip}
\end{examplenorm}

\begin{examplenorm}\label{Example Consitency Parametric - Pareto}
Let $\Theta:=(1,\infty)$ and $F_{a}$ be the distribution function of the Pareto distribution ${\rm Par}_{a,\overline{c}}$ with unknown tail-index $a > 0$ and {\em known} location parameter $\overline{c} > 0$. Recall that ${\rm Par}_{a,\overline{c}}$ possesses the Lebesgue density
$$
    f_{a}(x):=
    \left\{
    \begin{array}{lll}
        a\,\overline{c}^{a}\,x^{-(a+1)} & , & x>\overline{c}\\
        0 & , & x\le\overline{c}
    \end{array}
    \right..
$$
It may be verified very easily that the mapping $a\mapsto F_{a}$ is $1$-weakly sequentially continuous at every $a_0\in\Theta$. Further, in the corresponding infinite statistical product model $(\R^\N,{\cal B}(\R)^{\otimes\N},\{{\rm Par}_{a,_{a,\overline{c}}}: a\in\Theta\})$ a maximum likelihood estimator $\widehat{a}_n$ for the tail-index $a$ is given by
\begin{equation}\label{ML Pareto}
    \widehat{a}_n(x_{1},x_{2},\dots) :=
    \left\{
    \begin{array}{lll}
        \big(\frac{1}{n}\sum_{i=1}^{n} (\log(x_{i}) - \log(\overline{c})\big)^{-1} & , & \min_{i=1,\dots, n} x_{i} >\overline{c}\\
        \overline{a} & , & \min_{i=1,\dots, n} x_{i}\le\overline{c}
    \end{array}
    \right.
\end{equation}
for any fixed $\overline{a} > 1$. Since the expectation of the logarithm of a ${\rm Par}_{a,\overline{c}}$-distributed random variable equals $\log(\overline{c})+1/a$, it follows easily by the classical strong law of large numbers that this estimator is strongly consistent.

A popular alternative estimator for the tail-index $a$ is the so-called Hill estimator
%To recall, for every sample size $n\in\N, ~n\geq 2$ and fixed integer $k_{n}\in\{1,\dots,n - 1\}$ the Hill estimator $\widehat{a}^{H}_{n,k_{n}}$ is defined by
\begin{equation}\label{Hill Pareto}
    \widehat{a}^{H}_{n,k_{n}}(x_{1},x_{2},\dots) :=
    \left\{
    \begin{array}{lll}
        \big\{\frac{1}{k_{n}}\sum_{i=1}^{k_n} \log(x_{n:n - i + 1}) - \log(x_{n:n - k_{n}})\big\}^{-1} & , & \min_{i=1,\dots, k_{n}} x_{i} >\overline{c}\\
        \overline{a} & , & \min_{i=1,\dots, k_{n}} x_{i} \le\overline{c}
    \end{array}
    \right.
\end{equation}
for any fixed $\overline{a} > 1$, where $n\ge 2$, $k_{n}\in\{1,\dots,n - 1\}$, and $x_{n:1}\leq\dots\leq x_{n:n}$ denotes an increasing ordering of $x_{1},\dots,x_{n}$. It is known from \cite{DeheuvelsHaeuslerMason1988} that $(\widehat{a}^{H}_{n,k_{n}})$ is strongly consistent for $a$ whenever $k_{n}/\log(\log(n))\to 0$ and $k_{n}/n\to 0$ as $n\to\infty$.
{\hspace*{\fill}$\Diamond$\par\bigskip}
\end{examplenorm}

%%%%%%%%%%%%%%%%%%%%%%%%%%%%%%%%%%%%%%%%%%%%%%%%%%%%%%%%%%%%%%%%
%%%%%%%%%%%%%%%%%%%%%%%%%%%%%%%%%%%%%%%%%%%%%%%%%%%%%%%%%%%%%%%%
%%%%%%%%%%%%%%%%%%%%%%%%%%%%%%%%%%%%%%%%%%%%%%%%%%%%%%%%%%%%%%%%

\subsection{Asymptotic distribution}\label{section asymptotic distribution - param}

In this section we assume that $\Theta$ and $\Upsilon_{0}$ are subsets of a vector space $\Upsilon$ equipped with a separable norm $\|\cdot\|_{\Upsilon}$. We denote by ${\cal B}(\Upsilon)$ the Borel $\sigma$-algebra on $(\Upsilon,\|\cdot\|_{\Upsilon})$ and by ${\cal B}_1$ the Borel $\sigma$-algebra on $(\bL_1,\|\cdot\|_{1,\ell})$. Moreover we define a map $\mathfrak{F}:\Theta\rightarrow\F_1 (\subseteq\bL_0)$ by
$$
    \mathfrak{F}(\theta):=F_\theta.
$$
In the following theorem we assume that $\mathfrak{F}$ is Hadamard differentiable at some $\theta_0\in\Theta$ tangentially to $\Upsilon_{0}$ with trace $\bL_1$ (in the sense of Definition \ref{definition quasi hadamard} and Remark \ref{definition quasi hadamard - remark}(iii). This means that there exists a continuous map $\dot \mathfrak{F}_{\theta_0}:\Upsilon_{0}\rightarrow\bL_1$ (the Hadamard derivative) such that
$$
    \lim_{n\to\infty}\Big\|\dot \mathfrak{F}_{\theta_0}(\tau)-\frac{F_{\theta_0+\varepsilon_n\tau_n}- F_{\theta_0}}{\varepsilon_n}\Big\|_{1,\ell}\,=\,0
$$
holds for each triplet $(\tau,(\tau_n),(\varepsilon_n))$ with $\tau\in\Upsilon_{0}$, $(\tau_n)\subseteq\Upsilon$ satisfying $(\theta_0 + \varepsilon_{n}\tau_{n})\subseteq\Theta$ as well as $\|\tau_n-\tau\|_{\Upsilon}\to 0$, and $(\varepsilon_n)\subseteq(0,\infty)$ satisfying $\varepsilon_n\to 0$. Recall that $F_1-F_2\in\bL_{1}$ holds for every $F_1,F_2\in\F_{1}$.

%{\color{brown}Das folgende Theorem stimmt so noch nicht ganz. Ich vermute, dass $Y_{\theta_0}$ genauer $\Upsilon$-wertig (und nicht $\Theta$-wertig) sein soll!? In Example \ref{Example Asymptotic Distribution Parametric - Lognormal} ist $Y$ ja auch $\R^2$-wertig (und nicht $\R\times(0,\infty)$-wertig.)}

\begin{theorem}\label{Asymptotc Distribution thm - parametric}
Let $\theta_0\in\Theta$ and $(a_n)$ be a sequence of positive real numbers tending to $\infty$. Let $\widehat\theta_n:\Omega\to\Theta$ be any map such that $a_n(\widehat\theta_n-\theta_0)$ is $({\cal F},{\cal B}(\Upsilon))$-measurable and $a_n(\widehat\theta_n-\theta_0)\leadsto Y_{\theta_0}$ under $\pr^{\theta_0}$ for some $(\Upsilon,{\cal B}(\Upsilon))$-valued random variable $Y_{\theta_0}$ taking values only in $\Upsilon_0$. Assume that $a_n(F_{\widehat\theta_n}-F_{\theta_0})$ is $({\cal F},{\cal B}_1)$-measurable. Further assume that the map $\mathfrak{F}:\Theta\rightarrow\F_1(\subseteq\bL_0)$ is Hadamard differentiable at $\theta_0$ tangentially to $\Upsilon_0$ with trace $\bL_1$, and consider the Hadamard derivative $\dot \mathfrak{F}_{\theta_0}:\Upsilon_{0}\rightarrow\bL_1$. If $F_{\theta_{0}}$ is continuous at $\cR_{\alpha}(F_{\theta_{0}})$, then
\begin{equation}\label{Asymptotc Distribution thm - parametric - eq}
    a_n\big({\cal R}_\alpha(F_{\widehat\theta_n})-{\cal R}_\alpha(F_{\theta_0})\big)\,\leadsto\,\dot{\cal R}_{\alpha;{F_{\theta_0}}}(\dot \mathfrak{F}_{\theta_0}(Y_{\theta_0}))\qquad\mbox{in $(\R,{\cal B}(\R))$}
\end{equation}
under $\pr^{\theta_0}$, where $\dot{\cal R}_{\alpha;F}$ is defined by (\ref{def of limit of emp process}).
\end{theorem}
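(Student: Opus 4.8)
The plan is to derive the limit law (\ref{Asymptotc Distribution thm - parametric - eq}) by chaining the two differentiability results at hand through a two-fold application of the functional delta-method of Theorem \ref{delta method for the bootstrap}(i). The quantity of interest is the image of $\widehat\theta_n$ under the composite map $\cR_\alpha\circ\mathfrak{F}:\Theta\to\R$, and the crucial structural observation is that the trace space $\bL_1$ of the Hadamard derivative $\dot{\mathfrak{F}}_{\theta_0}$ is precisely the tangent space with respect to which $\cR_\alpha$ is quasi-Hadamard differentiable, so that the two derivatives compose.

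First I would handle the outer map $\mathfrak{F}$. By assumption $\mathfrak{F}:\Theta\to\F_1(\subseteq\bL_0)$ is Hadamard differentiable at $\theta_0$ tangentially to $\Upsilon_0$ with trace $\bL_1$, and $a_n(\widehat\theta_n-\theta_0)$ is $(\cF,\cB(\Upsilon))$-measurable with $a_n(\widehat\theta_n-\theta_0)\leadsto Y_{\theta_0}$ under $\pr^{\theta_0}$, where $Y_{\theta_0}$ takes values only in the tangent set $\Upsilon_0$. Feeding these ingredients into Theorem \ref{delta method for the bootstrap}(i) should yield
$$
    a_n\big(F_{\widehat\theta_n}-F_{\theta_0}\big)\;=\;a_n\big(\mathfrak{F}(\widehat\theta_n)-\mathfrak{F}(\theta_0)\big)\;\leadsto\;\dot{\mathfrak{F}}_{\theta_0}(Y_{\theta_0})\qquad\mbox{in }(\bL_1,\|\cdot\|_{1,\ell}).
$$
The limit $\dot{\mathfrak{F}}_{\theta_0}(Y_{\theta_0})$ is a genuine $\bL_1$-valued random element, since $\dot{\mathfrak{F}}_{\theta_0}$ is continuous and $Y_{\theta_0}$ is measurable, and it takes values in all of $\bL_1$, which is exactly the tangent space needed in the next step.

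Next I would handle the inner map $\cR_\alpha$. Since $F_{\theta_0}$ is continuous at $\cR_\alpha(F_{\theta_0})$, Theorem \ref{main theorem} supplies quasi-Hadamard differentiability of $\cR_\alpha:\F_1\to\R$ at $F_{\theta_0}$ tangentially to $\bL_1\langle\bL_1\rangle$, with linear derivative $\dot{\cR}_{\alpha;F_{\theta_0}}$ as in (\ref{def of limit of emp process}). The prelimit $a_n(F_{\widehat\theta_n}-F_{\theta_0})$ is assumed $(\cF,\cB_1)$-measurable and, by the previous step, converges in distribution to a limit valued in the tangent space $\bL_1$; these are exactly the premises for a second application of Theorem \ref{delta method for the bootstrap}(i), now to $\cR_\alpha$, which gives
$$
    a_n\big(\cR_\alpha(F_{\widehat\theta_n})-\cR_\alpha(F_{\theta_0})\big)\;\leadsto\;\dot{\cR}_{\alpha;F_{\theta_0}}\big(\dot{\mathfrak{F}}_{\theta_0}(Y_{\theta_0})\big)\qquad\mbox{in }(\R,\cB(\R)),
$$
which is the claimed (\ref{Asymptotc Distribution thm - parametric - eq}).

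The invocations of the two differentiability theorems are routine; the step demanding care, and the main obstacle, is the interface between the two delta-method applications. I would need to check carefully that the output of the first stage is an admissible input to the second, that is, that the trace $\bL_1$ of $\mathfrak{F}$ really coincides with the tangent space $\bL_1$ of $\cR_\alpha$ (so the derivatives genuinely compose to $\dot{\cR}_{\alpha;F_{\theta_0}}\circ\dot{\mathfrak{F}}_{\theta_0}$), and that the assumed $(\cF,\cB_1)$-measurability of $a_n(F_{\widehat\theta_n}-F_{\theta_0})$ together with the $\bL_1$-valuedness of its weak limit meet the measurability and tangent-set requirements of Theorem \ref{delta method for the bootstrap}(i). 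This is precisely why the measurability of $a_n(F_{\widehat\theta_n}-F_{\theta_0})$ is imposed as a hypothesis rather than derived; once it and the space-matching are confirmed, the chain closes with no further computation.
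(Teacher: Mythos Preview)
Your proposal is correct and follows essentially the same route as the paper's own proof: a first application of the functional delta-method (Theorem \ref{delta method for the bootstrap}(i)) to $\mathfrak{F}$ at $\theta_0$ to obtain $a_n(F_{\widehat\theta_n}-F_{\theta_0})\leadsto\dot{\mathfrak{F}}_{\theta_0}(Y_{\theta_0})$ in $(\bL_1,\cB_1,\|\cdot\|_{1,\ell})$, followed by a second application to $\cR_\alpha$ via Theorem \ref{main theorem}. Your remarks on the trace/tangent-space matching and the role of the measurability hypothesis are exactly the points that make the two applications chain cleanly.
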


\begin{proof}
In view of $a_n(\widehat\theta_n-\theta_0)\leadsto Y_{\theta_0}$ under $\pr^{\theta_0}$ and the Hadamard differentiability of $\mathfrak{F}$ tangentially to $\Upsilon_0$ with trace $\bL_1$, the functional delta-method in the form of Theorem \ref{delta method for the bootstrap} yields $a_n(F_{\widehat\theta_n}-F_{\theta_0})\leadsto\dot{\mathfrak{F}}_{\theta_0}(Y_{\theta_0})$ in $(\bL_1,{\cal B}_1,\|\cdot\|_{1,\ell})$ under $\pr^{\theta_0}$. Then Theorem \ref{main theorem} and another application of the functional delta-method in the form of Theorem \ref{delta method for the bootstrap} give (\ref{Asymptotc Distribution thm - parametric - eq}).
\end{proof}

If $\Theta$ is an open subset of a Euclidean space, then we may find a convenient criterion to guarantee the condition of differentiability required for $\mathfrak{F}$ in Theorem \ref{Asymptotc Distribution thm - parametric}. The following lemma provides a criterion in terms of conditions on the map $\mathfrak{f}:\Theta\times\R\rightarrow[0,1]$ defined by
$$
    \mathfrak{f}(\theta,x):= F_{\theta}(x).
$$
In this lemma $\|\cdot\|$ and $\langle \cdot , \cdot \rangle$ stand for the Euclidean norm and Euclidean scalar product on $\R^{d}$, and ${\rm grad}_{\theta}\,\mathfrak{f}(\theta,x)$ denotes the gradient of the function $\mathfrak{f}(\,\cdot\,,x)$ at $\theta$ for any fixed $x$.

\begin{lemma}\label{Asymptotc Distribution cor - parametric}
Let $\Theta\subseteq\R^d$ be open and $\theta_{0}\in\Theta$. Let ${\cal V}$ denote some open neighbourhood of $\theta_{0}$ in $\Theta$ such that for every $x$ the map $\mathfrak{f}(\,\cdot\,,x)$ is continuously differentiable on ${\cal V}$. Furthermore, let $\mathfrak{h}:\R\rightarrow\R$ be an $\ell$-integrable function such that
$$
    \sup_{\theta\in{\cal V}}\,\|{\rm grad}_{\theta}\,\mathfrak{f}(\theta,x)\|\leq \mathfrak{h}(x)\qquad\ell\mbox{-a.e.\ }x.
$$
Then $\mathfrak{F}:\Theta\rightarrow\F_1(\subseteq\bL_0)$ is Hadamard differentiable at $\theta_0$ (tangentially to the whole space $\R^d$) with trace $\bL_1$, and the Hadamard derivative $\dot \mathfrak{F}_{\theta_0}:\R^{d}\rightarrow\bL_1$ is given by
$$
    \dot \mathfrak{F}_{\theta_0}(\tau)(\,\cdot\,):=\langle{\rm grad}_{\theta}\,\mathfrak{f}(\theta_0,\,\cdot\,),\tau\rangle,\qquad \tau\in\Theta.
$$
\end{lemma}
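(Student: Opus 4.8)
The plan is to verify the defining limit of Hadamard differentiability with trace $\bL_1$ directly, the whole argument being essentially one application of the dominated convergence theorem. First I would fix a triplet $(\tau,(\tau_n),(\varepsilon_n))$ as in the definition, write $\theta_n:=\theta_0+\varepsilon_n\tau_n$, and observe that since $\varepsilon_n\to 0$ and $\tau_n\to\tau$ we have $\|\theta_n-\theta_0\|\le\varepsilon_n\|\tau_n\|\to 0$, so $\theta_n\to\theta_0$. Choosing an open ball $B(\theta_0,r)\subseteq{\cal V}$ (possible because ${\cal V}$ is an open neighbourhood of $\theta_0$), I get that for all sufficiently large $n$ both $\theta_0$ and $\theta_n$, and hence the whole segment $[\theta_0,\theta_n]$, lie in $B(\theta_0,r)\subseteq{\cal V}$ by convexity of the ball. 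Recall that the difference quotient $(F_{\theta_n}-F_{\theta_0})/\varepsilon_n$ is an element of $\bL_1$, so that convergence is to be measured in $\|\cdot\|_{1,\ell}$.

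On that segment $\mathfrak{f}(\,\cdot\,,x)$ is $C^1$, so the fundamental theorem of calculus gives, for $\ell$-a.e.\ $x$ and all large $n$,
\[
  \frac{F_{\theta_n}(x)-F_{\theta_0}(x)}{\varepsilon_n}
  \,=\, \int_0^1 \big\langle {\rm grad}_\theta\,\mathfrak{f}(\theta_0+t\varepsilon_n\tau_n,x),\,\tau_n\big\rangle\,dt .
\]
Subtracting $\langle{\rm grad}_\theta\,\mathfrak{f}(\theta_0,x),\tau\rangle=\dot{\mathfrak{F}}_{\theta_0}(\tau)(x)$ and splitting the integrand into $\langle{\rm grad}_\theta\,\mathfrak{f}(\theta_0+t\varepsilon_n\tau_n,x)-{\rm grad}_\theta\,\mathfrak{f}(\theta_0,x),\tau_n\rangle+\langle{\rm grad}_\theta\,\mathfrak{f}(\theta_0,x),\tau_n-\tau\rangle$, I would argue that for $\ell$-a.e.\ $x$ this tends to $0$ uniformly in $t\in[0,1]$: indeed $\theta_0+t\varepsilon_n\tau_n\to\theta_0$ uniformly in $t$, continuity of ${\rm grad}_\theta\,\mathfrak{f}(\,\cdot\,,x)$ at $\theta_0$ forces the first inner product to vanish, and $\tau_n\to\tau$ together with $\|{\rm grad}_\theta\,\mathfrak{f}(\theta_0,x)\|\le\mathfrak{h}(x)$ handles the second. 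Hence the $t$-integral above converges pointwise in $x$ to $\dot{\mathfrak{F}}_{\theta_0}(\tau)(x)$.

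The main (though still routine) step is upgrading this pointwise convergence to convergence in $\|\cdot\|_{1,\ell}$, and here the only genuine care is to produce an $n$-independent integrable majorant while keeping the integration segment inside ${\cal V}$. Setting $M:=\sup_n\|\tau_n\|<\infty$, the domination hypothesis $\|{\rm grad}_\theta\,\mathfrak{f}(\theta,x)\|\le\mathfrak{h}(x)$ on ${\cal V}$ bounds the difference-quotient integrand by $M\,\mathfrak{h}(x)$ and the limit by $\|\tau\|\,\mathfrak{h}(x)$, for all large $n$, so the whole difference is dominated by $(M+\|\tau\|)\,\mathfrak{h}(x)$, which is $\ell$-integrable and independent of $n$. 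Dominated convergence then yields
\[
  \Big\| \frac{F_{\theta_n}-F_{\theta_0}}{\varepsilon_n}-\dot{\mathfrak{F}}_{\theta_0}(\tau) \Big\|_{1,\ell} \,\longrightarrow\, 0 ,
\]
which is exactly the asserted differentiability.

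It remains to record that $\dot{\mathfrak{F}}_{\theta_0}$ has the claimed properties. Borel measurability of $x\mapsto{\rm grad}_\theta\,\mathfrak{f}(\theta_0,x)$ follows from writing its components as pointwise limits of difference quotients of the measurable maps $x\mapsto\mathfrak{f}(\theta,x)=F_\theta(x)$; the same domination gives $\|\dot{\mathfrak{F}}_{\theta_0}(\tau)\|_{1,\ell}\le\|\tau\|\int\mathfrak{h}\,d\ell<\infty$, so $\dot{\mathfrak{F}}_{\theta_0}(\tau)\in\bL_1$. Linearity of $\tau\mapsto\dot{\mathfrak{F}}_{\theta_0}(\tau)$ is immediate from the bilinearity of $\langle\cdot,\cdot\rangle$, and the estimate $\|\dot{\mathfrak{F}}_{\theta_0}(\tau)-\dot{\mathfrak{F}}_{\theta_0}(\tau')\|_{1,\ell}\le\|\tau-\tau'\|\int\mathfrak{h}\,d\ell$ shows that $\dot{\mathfrak{F}}_{\theta_0}$ is (Lipschitz) continuous, completing the verification.
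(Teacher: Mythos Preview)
Your proof is correct and follows essentially the same route as the paper: pointwise convergence of the difference quotients via continuous differentiability of $\mathfrak{f}(\cdot,x)$, a uniform majorant $(\sup_n\|\tau_n\|)\,\mathfrak{h}(x)$ from the gradient bound, and then dominated convergence. The only difference is cosmetic---the paper invokes the multivariate mean value theorem directly for the bound whereas you use the integral representation $\int_0^1\langle{\rm grad}_\theta\mathfrak{f}(\theta_0+t\varepsilon_n\tau_n,x),\tau_n\rangle\,dt$---and your explicit verification that $\dot{\mathfrak{F}}_{\theta_0}$ is $\bL_1$-valued, linear, and Lipschitz continuous is a detail the paper leaves implicit.
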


The proof of Lemma \ref{Asymptotc Distribution cor - parametric} may be found in Section \ref{vereinfachtes Kriterium}.

\begin{examplenorm}\label{Example Asymptotic Distribution Parametric - Lognormal}
Consider the subset $\Theta:=\R\times (0,\infty)$ of $\Upsilon:=\R^2$, and let $F_{(m,s^2)}$ be the distribution function of the log-normal distribution ${\rm LN}_{(m,s^2)}$ with parameters $(m,s^2)\in\Theta$ as in Example \ref{Example Consitency Parametric - Lognormal}. Moreover consider the map $\mathfrak{F}:\Theta\rightarrow\F_1(\subseteq\bL_0)$ defined by $\mathfrak{F}(m,s^2):=F_{(m,s^2)}$. It is shown in Section \ref{Proof of Example Asymptotic Distribution Parametric - Lognormal} ahead (using Lemma \ref{Asymptotc Distribution cor - parametric}) that for every fixed $(m_0,s_0^2)\in\Theta$ the map $\mathfrak{F}$ is Hadamard differentiable at $(m_0,s_0^2)\in\Theta$ (tangentially to the whole space $\Upsilon=\R^2$) with trace $\bL_1$ and Hadamard derivative $\dot \mathfrak{F}_{(m_0,s_0^2)}:\R^{2}\rightarrow\bL_1$ given by
\begin{equation}\label{Example Asymptotic Distribution Parametric - Lognormal - derivative}
    \dot\mathfrak{F}_{(m_{0},s_{0}^{2})}(\tau_1,\tau_2)(x):=
    \left\{
    \begin{array}{lll}
          -\Big(\frac{\tau_1}{s_{0}} + \frac{(\log(x) - m_{0})\,\tau_2}{s_{0}^{3}}\Big)\phi_{(0,1)}\Big(\frac{\log(x) - m_{0}}{s_{0}}\Big) & , & x> 0\\
        0 & , & x\le 0
    \end{array}
    \right.,
\end{equation}
where $\phi_{(0,1)}$ is the standard Lebesgue density of the standard normal distribution.

Further, it may be verified easily that the family $\{{\rm LN}_{(m,s^{2})}:(m,s^{2})\in\Theta\}$ satisfies the assumptions of Theorem 6.5.1 in \cite{LehmannCasella1998}. Therefore the maximum likelihood estimator $(\widehat m_n,s_n^2)$ given by (\ref{Example Consitency Parametric - Lognormal - EQ}) in the corresponding infinite statistical product model satisfies
$$
    \sqrt{n}
    \left(
    \left[
    \begin{array}{l}
        \widehat m_n\\
        s_n^2
    \end{array}
    \right]
    -
    \left[
    \begin{array}{l}
        m_0\\
        s_0^2
    \end{array}
    \right]
    \right)\,\leadsto\,Y_{(m_0,s_0^2)}
    \qquad\mbox{in $(\Upsilon,{\cal B}(\Upsilon))$}
$$
under $\pr^{(m_0,s_0^2)}$ for every $(m_0,s_0^2)\in\Theta$, where $Y_{(m_0,s_0^2)}$ is bivariate centered normal with covariance matrix ${\cal I}(m_{0},s_{0}^{2})^{-1}$. Here ${\cal I}(m_{0},s_{0}^{2})$ denotes the Fisher information matrix at $(m_{0},s_{0}^{2})$, and  elementary calculations show that
$$
    {\cal I}(m_{0},s_{0}^{2})^{-1} =
    \left[
    \begin{array}{ll}
        s_{0}^{2} & 0\\
        0 & 2 (s_{0}^{2})^{2}
    \end{array}
    \right].
$$

Now Theorem \ref{Asymptotc Distribution thm - parametric} shows that $\sqrt{n}({\cal R}_\alpha(F_{(\widehat m_n,\widehat s_n^2)})-{\cal R}_\alpha(F_{(m_0,s_0^2)}))$ converges in distribution to $Z:=\dot\cR_{\alpha;F_{(m_{0},s_{0}^{2})}}(\dot\mathfrak{F}_{(m_{0},s_{0}^{2})}(Y_{(m_{0},s_{0}^{2}}))$. In Section \ref{Proof of Example Asymptotic Distribution Parametric - Lognormal} it is shown that the limit $Z$ is centered normal with variance
\begin{eqnarray}\label{Example Asymptotic Distribution Parametric - Lognormal - variance}
    \lefteqn{e^{2m_0 + s_0^{2}}\, \Big(1 + 2\Big\{s_{0} + \frac{\phi_{(0,1)}\big(\varphi(m_{0}, s_{0}^{2})\big)}{1 - \alpha - (1 - 2\alpha)\Phi_{(0,1)}\big(\varphi(m_{0}, s_{0}^{2})\big)}\Big\}^{2}\,\Big)}\\
    & & \qquad\qquad\qquad\qquad\qquad\times\,\Big(\frac{(1 - \alpha) - (1 - 2\alpha)\Phi_{(0,1)}\big(\varphi(m_0,s_0^{2})\big)}{(1-2\alpha)F_{(m_{0},s_{0}^{2})}({\cal R}_\alpha(F_{(m_{0},s_{0}^{2})}))+\alpha}\Big)^{2},\nonumber
\end{eqnarray}
where $\varphi(m_{0},s_{0}^{2}) := (m_{0} + s_{0}^{2} - \log(\cR_{\alpha}(F_{(m_{0},s_{0}^{2}})))/2$ (note that we may show $\cR_{\alpha}(F_{(m_{0},s_{0}^{2})}) > 0$; cf.\ Section \ref{Proof of Example Asymptotic Distribution Parametric - Lognormal} below) and $\Phi_{(0,1)}$ denotes the distribution function of the standard normal distribution. Note that $1 - \alpha - (1 - 2\alpha)~\Phi_{(0,1)}(z) = (1-\alpha)\Phi_{(0,1)}(-z) + \alpha \Phi_{(0,1)}(z) > 0$ holds for every $z\in\R$.
%For details {\color{magenta}to derive the distribution of $\dot\cR_{\alpha;F_{(m_{0},s_{0}^{2})}}(\dot\mathfrak{F}_{(m_{0},s_{0}^{2})}(Y_{(m_{0},s_{0}^{2}}))$}
{\hspace*{\fill}$\Diamond$\par\bigskip}
\end{examplenorm}

\begin{examplenorm}\label{Example Asymptotic Distribution Parametric - Pareto}
Consider the subset $\Theta:= (1,\infty)$ of $\Upsilon:=\R$, and let $F_{a}$ be the distribution function of the Pareto distribution with {\em known} location parameter $\overline{c}>1$ and unknown tail-index $a\in\Theta$ as defined in Example \ref{Example Consitency Parametric - Pareto}. Moreover consider the map $\mathfrak{F}:\Theta\rightarrow\F_1(\subseteq\bL_0)$ defined by $\mathfrak{F}(a):=F_{a}$. An easy exercise shows that we may apply Lemma \ref{Asymptotc Distribution cor - parametric} to conclude that for every $a_{0}\in\Theta$ the map $\mathfrak{F}$ is Hadamard differentiable at $a_{0}\in\Theta$ (tangentially to the whole space $\Upsilon=\R$) with trace $\bL_1$ and Hadamard derivative $\dot \mathfrak{F}_{a_{0}}: \R\rightarrow\bL_1$ given by
\begin{equation}\label{Example Asymptotic Distribution Parametric - Pareto - derivative}
    \dot\mathfrak{F}_{a_{0}}(y)(x):=
    \left\{
    \begin{array}{lll}
         y\,\log(\overline{c}/x)\,(\overline{c}/x)^{a_{0}} & , & x> \overline{c}\\
        0 & , & x\le \overline{c}
    \end{array}
    \right..
\end{equation}

We may also verify easily that the family $\{{\rm Par}_{a,\overline{c}}:a\in\Theta\}$ satisfies the assumptions of Theorem 6.2.6 in \cite{LehmannCasella1998}. Therefore the maximum likelihood estimator $\widehat{a}_{n}$ given by (\ref{ML Pareto}) in the corresponding infinite statistical product model satisfies
\begin{equation}\label{asymptotic normality ML Pareto}
    \sqrt{n}(\widehat{a}_n- a_{0})\,\leadsto\,Y_{a_{0}} \qquad\mbox{in $(\Upsilon,{\cal B}(\Upsilon))$}
\end{equation}
under $\pr^{a_{0}}$ for every $a_{0}\in\Theta$, where $Y_{a_{0}}$ is centered normal with $\vari[Y_{a_{0}}] = a_{0}^{2}$. For the Hill estimators $\widehat{a}^{H}_{n,k_{n}}$ as defined by (\ref{Hill Pareto}) Theorem 2 in \cite{Hall1982} shows that, if $k_{n}\to\infty$ and $k_{n}={\cal O}(n^\gamma)$ for some $\gamma<1$,
\begin{equation}
\label{asymptotic normality Hill Pareto}
    \sqrt{k_{n}} (\widehat{a}_{n,k_n}^H - a_{0})\,\leadsto\,Y_{a_{0}} \qquad\mbox{in $(\Upsilon,{\cal B}(\Upsilon))$}
\end{equation}
under $\pr^{a_{0}}$ for every $a_{0}\in\Theta$, where $Y_{a_{0}}$ is the same as in (\ref{asymptotic normality ML Pareto}). That is, up to the rate of convergence, the asymptotic of the maximum likelihood estimator $\widehat a_n$ is the same as the asymptotic of the Hill estimator $\widehat a_{n,k_n}^H$.

Now we may apply Theorem \ref{Asymptotc Distribution thm - parametric} to obtain that both $\sqrt{n}({\cal R}_\alpha(F_{\widehat a_n})-{\cal R}_\alpha(F_{a_0}))$ and $\sqrt{k_n}({\cal R}_\alpha(F_{\widehat a_{n,k_n}^H})-{\cal R}_\alpha(F_{a_0}))$ converge in distribution to $Z:=\dot\cR_{\alpha;F_{a_0}}(\dot\mathfrak{F}_{a_0}(Y_{a_0}))$. In Section \ref{Proof of Example Asymptotic Distribution Parametric - Pareto} it is shown that the limit $Z$ is centered normal with variance
\begin{equation}\label{Example Asymptotic Distribution Parametric - Pareto - 10}
    \frac{\overline{c}^{2}}{(1- a_{0})^{4}\{(1 - 2\alpha) F_{a_{0}}(\cR_{\alpha}(F_{a_{0}})) + \alpha\}^{2}}\,\varphi(a_{0},\overline{c})^{2},
\end{equation}
where $\varphi(a_{0},\overline{c}):=(\cR_{\alpha}(F_{a_{0}})/\overline{c})^{1-a_{0}}\,(1 - (1-a_{0})\log(\cR_{\alpha}(F_{a_{0}})/\overline{c}))\,(1- 2\alpha) + \alpha - 1$.
{\hspace*{\fill}$\Diamond$\par\bigskip}
\end{examplenorm}

%%%%%%%%%%%%%%%%%%%%%%%%%%%%%%%%%%%%%%%%%%%%%%%%%%%%%%%%%%%%%%%%
%%%%%%%%%%%%%%%%%%%%%%%%%%%%%%%%%%%%%%%%%%%%%%%%%%%%%%%%%%%%%%%%
%%%%%%%%%%%%%%%%%%%%%%%%%%%%%%%%%%%%%%%%%%%%%%%%%%%%%%%%%%%%%%%%
%%%%%%%%%%%%%%%%%%%%%%%%%%%%%%%%%%%%%%%%%%%%%%%%%%%%%%%%%%%%%%%%
%%%%%%%%%%%%%%%%%%%%%%%%%%%%%%%%%%%%%%%%%%%%%%%%%%%%%%%%%%%%%%%%
%%%%%%%%%%%%%%%%%%%%%%%%%%%%%%%%%%%%%%%%%%%%%%%%%%%%%%%%%%%%%%%%

\section{Proof of Theorem \ref{main theorem}}\label{proof of main theorem}

For every $F\in\F_1$ the map ${\cal U}_\alpha(F)(\cdot)$ given by (\ref{Def Mapping cal U alpha}) is real-valued, continuous, strictly decreasing, satisfies $\lim_{m\to\pm\infty}{\cal U}(F)(m)=\mp\infty$, and may be represented as
\begin{eqnarray}
    {\cal U}_\alpha(F)(m)
    & = & -(1-\alpha)\int_{(-\infty,m)}\hspace*{-0.3cm} F(x)\,\ell(dx)\,+\,\alpha\int_{(m,\infty)}\hspace*{-0.3cm} (1-F(x))\,\ell(dx)\label{representation int by parts - eq1}\\
    & = & -(1-\alpha)\int_{(-\infty,0)}\hspace*{-0.3cm} F(x+m)\,\ell(dx)\,+\,\alpha\int_{(0,\infty)}\hspace*{-0.3cm} (1-F(x+m))\,\ell(dx).\label{representation int by parts - mathbb - eq}
\end{eqnarray}
This follows by Lemma \ref{representation int by parts - mathbb} and ensures that the functional ${\cal R}_\alpha$ defined by (\ref{def risk functional}), i.e.\ the mapping
%$$
%    \F_1\longrightarrow\R,\qquad F\longmapsto{\cal R}_\alpha(F):={\cal U}_\alpha(F)^{-1}(0),
%$$
$\F_1\rightarrow\R$, $F\mapsto{\cal R}_\alpha(F):={\cal U}_\alpha(F)^{-1}(0)$, is well defined.

%%%%%%%%%%%%%%%%%%%%%%%%%%%%%%%%%%%%%%%%%%%%%%%%%%%%%%%%%%%%%%%%
%%%%%%%%%%%%%%%%%%%%%%%%%%%%%%%%%%%%%%%%%%%%%%%%%%%%%%%%%%%%%%%%
%%%%%%%%%%%%%%%%%%%%%%%%%%%%%%%%%%%%%%%%%%%%%%%%%%%%%%%%%%%%%%%%

\subsection{Auxiliary lemmas}

\begin{lemma}\label{quasi HD of U}
Let $F\in\F_1$. Moreover let $(v,(v_n),(\varepsilon_n))$ be any triplet with $v\in\bL_1$, $(\varepsilon_n)\subseteq(0,\infty)$ satisfying $\varepsilon_n\to 0$, and $(v_n)\subseteq\bL_1$ satisfying $\|v_n-v\|_{1,\ell}\to 0$ as well as $F+\varepsilon_nv_n \in\F_1$ for every $n\in\N$. Then the following two assertions hold:

(i) We have
$$
    \lim_{n\to\infty}\,\sup_{m\in\R}\,\Big|\frac{{\cal U}_\alpha(F+\varepsilon_nv_n)(m)-{\cal U}_\alpha(F)(m)}{\varepsilon_n}-\dot{\cal U}_{\alpha}(v)(m)\Big|\,=\,0,
$$
where
\begin{equation}\label{Def HD of cal U}
    \dot{\cal U}_{\alpha}(v)(m)\,:=\,-(1-\alpha)\int_{(-\infty,0)} v(x+m)\,\ell(dx)\,-\,\alpha\int_{(0,\infty)} v(x+m)\,\ell(dx).
\end{equation}

(ii) For any $\varepsilon>0$ there is some $n_0=n_0(\varepsilon)\in\N$ such that for every $n\ge n_0$ the value ${\cal R}_\alpha(F+\varepsilon_nv_n)$ lies in the open interval $({\cal R}_\alpha(F)-\varepsilon,{\cal R}_\alpha(F)+\varepsilon)$.
\end{lemma}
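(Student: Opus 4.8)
The plan is to treat the two parts separately, exploiting that the integration-by-parts representation (\ref{representation int by parts - mathbb - eq}) is \emph{affine} in $F$.

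For part (i), I would first note that since $F+\varepsilon_n v_n\in\F_1$, the representation (\ref{representation int by parts - mathbb - eq}) (valid for every element of $\F_1$ by Lemma \ref{representation int by parts - mathbb}) applies both to $F$ and to $G_n:=F+\varepsilon_n v_n$. Subtracting the two representations, the $\alpha$- and $(1-\alpha)$-weighted integrals over $F(x+m)$ and $G_n(x+m)$ combine into integrals over $(G_n-F)(x+m)=\varepsilon_n v_n(x+m)$; after dividing by $\varepsilon_n$ this yields \emph{exactly} $\dot{\cal U}_\alpha(v_n)(m)$ as defined in (\ref{Def HD of cal U}). No remainder term appears precisely because the dependence on the distribution function is linear. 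Hence the expression inside the supremum equals $\dot{\cal U}_\alpha(v_n-v)(m)$ by linearity of $v\mapsto\dot{\cal U}_\alpha(v)$. It then remains to bound this uniformly in $m$. Using $|\dot{\cal U}_\alpha(w)(m)|\le(1-\alpha)\int_{(-\infty,0)}|w(x+m)|\,\ell(dx)+\alpha\int_{(0,\infty)}|w(x+m)|\,\ell(dx)$ together with $\alpha,1-\alpha\le 1$ and the translation invariance of $\ell$, one obtains $\sup_{m\in\R}|\dot{\cal U}_\alpha(v_n-v)(m)|\le\|v_n-v\|_{1,\ell}$, which tends to $0$ by hypothesis. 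This proves (i).

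For part (ii), I would first observe from the computation in (i) that $\sup_{m\in\R}|{\cal U}_\alpha(G_n)(m)-{\cal U}_\alpha(F)(m)|=\varepsilon_n\sup_{m\in\R}|\dot{\cal U}_\alpha(v_n)(m)|\le\varepsilon_n\|v_n\|_{1,\ell}$, which tends to $0$ because $(\|v_n\|_{1,\ell})$ is bounded (it converges to $\|v\|_{1,\ell}$). Writing $r:={\cal R}_\alpha(F)$, the function ${\cal U}_\alpha(F)(\cdot)$ is continuous and strictly decreasing with unique zero $r$ (as recorded at the start of this section), so $a:={\cal U}_\alpha(F)(r-\varepsilon)>0$ and $b:={\cal U}_\alpha(F)(r+\varepsilon)<0$. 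Choosing $n_0$ so large that the uniform error is strictly below $\eta:=\min(a,-b)>0$ for every $n\ge n_0$, one gets ${\cal U}_\alpha(G_n)(r-\varepsilon)>{\cal U}_\alpha(F)(r-\varepsilon)-\eta\ge 0$ and ${\cal U}_\alpha(G_n)(r+\varepsilon)<{\cal U}_\alpha(F)(r+\varepsilon)+\eta\le 0$. Since $G_n\in\F_1$, the function ${\cal U}_\alpha(G_n)(\cdot)$ is again continuous and strictly decreasing, so its unique zero ${\cal R}_\alpha(G_n)$ must lie strictly between $r-\varepsilon$ and $r+\varepsilon$, which is the claim.

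I do not expect a genuine obstacle: the argument is a linearity computation followed by a zero-crossing argument. The two points requiring care are (a) confirming that no nonlinear remainder arises when subtracting the two representations — this is exactly where affineness of (\ref{representation int by parts - mathbb - eq}) in $F$ is used, and it is what makes the difference quotient converge \emph{uniformly} in $m$ rather than merely pointwise — and (b) keeping the inequalities in (ii) strict, which is secured by the strict monotonicity and continuity of ${\cal U}_\alpha(G_n)(\cdot)$ rather than by the (possibly non-strict) margin $\eta$.
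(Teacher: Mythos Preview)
Your proof of part (i) is correct and essentially identical to the paper's: both exploit the affineness of the representation (\ref{representation int by parts - mathbb - eq}) to see that the difference quotient equals $\dot{\cal U}_\alpha(v_n)(m)$ exactly, and then bound $|\dot{\cal U}_\alpha(v_n-v)(m)|$ by $\|v_n-v\|_{1,\ell}$ uniformly in $m$.

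For part (ii) your argument is correct but takes a different route from the paper. The paper simply observes that $\|(F+\varepsilon_n v_n)-F\|_{1,\ell}=\varepsilon_n\|v_n\|_{1,\ell}\to 0$ and then invokes Theorem~\ref{1 weak continuity of R alpha} (the $1$-weak continuity of ${\cal R}_\alpha$, which in turn rests on results of Bellini et al.\ or on the Cheridito--Li/Kr\"atschmer et al.\ machinery). You instead give a self-contained zero-crossing argument: from the uniform closeness $\sup_m|{\cal U}_\alpha(G_n)(m)-{\cal U}_\alpha(F)(m)|\le\varepsilon_n\|v_n\|_{1,\ell}\to 0$ already obtained in (i), together with the strict monotonicity and continuity of ${\cal U}_\alpha(F)(\cdot)$, you trap the zero of ${\cal U}_\alpha(G_n)(\cdot)$ in $(r-\varepsilon,r+\varepsilon)$. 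Your approach is more elementary and keeps the lemma independent of the external continuity result; the paper's approach is shorter once Theorem~\ref{1 weak continuity of R alpha} is available. One minor remark on your closing sentence: the strictness of ${\cal U}_\alpha(G_n)(r\pm\varepsilon)\gtrless 0$ actually comes from choosing the uniform error \emph{strictly} below $\eta$, not from monotonicity; strict monotonicity is what then forces the zero to lie in the \emph{open} interval.
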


\begin{proof}
(i): In view of (\ref{representation int by parts - mathbb - eq}) and (\ref{Def HD of cal U}), we have
\begin{eqnarray*}
    \lefteqn{\Big|\frac{{\cal U}_\alpha(F+\varepsilon_nv_n)(m)-{\cal U}_\alpha(F)(m)}{\varepsilon_n}\,-\,\dot{\cal U}_{\alpha}(v)(m)\Big|}\\
    & \le & (1-\alpha)\int_{(-\infty,0)}|v_n(x+m)-v(x+m)|\,\ell(dx)\\
    & & +\,\alpha\,\int_{(0,\infty)}|v_n(x+m)-v(x+m)|\,\ell(dx)~\,\le~\,\|v_n-v\|_{1,\ell}
\end{eqnarray*}
for every $m\in\R$. This gives the claim.

(ii): The assumption $\|v_n-v\|_{1,\ell}\to 0$ implies $\|(F+\varepsilon_nv_n)-F\|_{1,\ell}\to 0$, and therefore the claim is an immediate consequence of Theorem \ref{1 weak continuity of R alpha}.
\end{proof}

%\begin{lemma}\label{coninuity of R alpha wrt weighted sup-norm - corollary}
%Let $F\in\F_1$. Moreover let $(v,(v_n),(\varepsilon_n))$ be any triplet with $v\in\bL_1$, $(\varepsilon_n)\subseteq(0,\infty)$ satisfying $\varepsilon_n\to 0$, and $(v_n)\subseteq\bL_1$ satisfying $\|v_n-v\|_{1,\ell}\to 0$ as well as $F+{\color{magenta}\varepsilon_n}v_n \in\F_1$ for every $n\in\N$. Then, for any $\varepsilon>0$ there is some $n_0=n_0(\varepsilon)\in\N$ such that for every $n\ge n_0$ the value ${\cal R}_\alpha(F+\varepsilon_nv_n)$ lies in the open interval $({\cal R}_\alpha(F)-\varepsilon,{\cal R}_\alpha(F)+\varepsilon)$.
%\end{lemma}
%
%\begin{proof}
%The assumption $\|v_n-v\|_{1,\ell}\to 0$ implies $\|(F+\varepsilon_nv_n)-F\|_{1,\ell}\to 0$, and therefore the claim is an immediate consequence of Theorem \ref{1 weak continuity of R alpha}.
%\end{proof}

For $-\infty < a < b < \infty$ let $\bB[a,b]$ denote the space of all bounded Borel measurable functions $f:[a,b]\rightarrow\R$. The space $\bB[a,b]$ will be equipped with the sup-norm $\|\cdot\|_\infty$. Furthermore, let $\bB_{\downarrow\,,0}[a,b]$ be the set of all non-increasing $f\in\bB[a,b]$ satisfying the inequalities $f(a)\geq 0\geq f(b)$. Then the mapping
$$
    {\cal I}_{a,b}:\bB_{\downarrow\,,0}[a,b]\longrightarrow\R,\qquad f\longmapsto f^{\rightarrow}(0)
$$
is well-defined, where $f^{\rightarrow}(0) := \sup\{x\in [a,b]:f(x)>0\}$ with $\sup\emptyset := a$. Further, for $x_0\in(a,b)$ let $\bB_{{\rm c},{x_{0}}}[a,b]$ denote the linear subspace of $\bB[a,b]$ consisting of all elements of $\bB[a,b]$ which are continuous at $x_{0}$. In the following lemma we employ the notion of tangential Hadamard differentiability in the classical sense as defined in \cite{Gill1989}; see also Definition \ref{definition quasi hadamard} and Remark \ref{definition quasi hadamard - remark} below.

\begin{lemma}\label{HDdiffbarkeitQuantile}
Let $-\infty < a < b < \infty,$ and let $f\in\bB_{\downarrow\,,0}[a,b]$ be differentiable at some $x_{0}\in (a,b)\cap f^{-1}(\{0\})$ with strictly negative derivative $f'(x_{0})$. Then ${\cal I}_{a,b}$ is Hadamard differentiable at $f$ tangentially to $\bB_{{\rm c},{x_{0}}}[a,b]$ with Hadamard derivative $\dot{\cal I}_{a,b;f}: \bB_{{\rm c},x_{0}}[a,b]\rightarrow\R$ given by
$$
    \dot{\cal I}_{a,b;f}(w):=-\frac{w(x_{0})}{f'(x_{0})}\,,\qquad w\in\bB_{{\rm c},x_{0}}[a,b].
$$
\end{lemma}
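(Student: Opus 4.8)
The plan is to verify the defining convergence of tangential Hadamard differentiability directly, by a sandwich argument that exploits the monotonicity of the perturbed functions. First I would record that $\cI_{a,b}(f)=x_0$: since $f$ is non-increasing with $f(x_0)=0$, one has $f\ge 0$ on $[a,x_0)$ and $f\le 0$ on $(x_0,b]$, while $f'(x_0)<0$ forces $f>0$ on a left neighbourhood of $x_0$; hence $\sup\{x\in[a,b]:f(x)>0\}=x_0$. Now fix a tangent vector $w\in\bB_{{\rm c},x_{0}}[a,b]$ together with admissible sequences $(w_n)\subseteq\bB[a,b]$ with $\|w_n-w\|_\infty\to 0$ and $(\varepsilon_n)\subseteq(0,\infty)$ with $\varepsilon_n\to 0$ such that $g_n:=f+\varepsilon_nw_n\in\bB_{\downarrow\,,0}[a,b]$, and write $t_n:=\cI_{a,b}(g_n)=g_n^{\rightarrow}(0)$ and $c:=-w(x_0)/f'(x_0)$. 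Since $\cI_{a,b}(f)=x_0$, it suffices to show $(t_n-x_0)/\varepsilon_n\to c$.

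Next, for an arbitrary $\eta>0$ I would test $g_n$ at the two points $x_n^{\pm}:=x_0+(c\pm\eta)\varepsilon_n$, which lie in $(a,b)$ for $n$ large because $x_n^{\pm}\to x_0\in(a,b)$. Differentiability of $f$ at $x_0$ together with $f(x_0)=0$ gives $f(x_n^{\pm})=f'(x_0)(c\pm\eta)\varepsilon_n+o(\varepsilon_n)$, and since $f'(x_0)c=-w(x_0)$ this equals $(-w(x_0)\mp|f'(x_0)|\eta)\varepsilon_n+o(\varepsilon_n)$. The perturbation term is handled by combining $\|w_n-w\|_\infty\to 0$ with continuity of $w$ at $x_0$ and $x_n^{\pm}\to x_0$, which yields $w_n(x_n^{\pm})\to w(x_0)$ and hence $\varepsilon_n w_n(x_n^{\pm})=w(x_0)\varepsilon_n+o(\varepsilon_n)$. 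Adding the two contributions gives $g_n(x_n^{-})=|f'(x_0)|\eta\,\varepsilon_n+o(\varepsilon_n)>0$ and $g_n(x_n^{+})=-|f'(x_0)|\eta\,\varepsilon_n+o(\varepsilon_n)<0$ for all sufficiently large $n$.

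The monotonicity of $g_n$ then converts these sign statements into bounds on $t_n$: from $g_n(x_n^{-})>0$ one gets $t_n\ge x_n^{-}$, while $g_n(x_n^{+})<0$ implies $g_n\le g_n(x_n^{+})<0$ on $[x_n^{+},b]$, so $\{x:g_n(x)>0\}\subseteq[a,x_n^{+})$ and therefore $t_n\le x_n^{+}$. Consequently $c-\eta\le (t_n-x_0)/\varepsilon_n\le c+\eta$ for $n$ large, and letting $\eta\downarrow 0$ yields $(t_n-x_0)/\varepsilon_n\to c$, i.e.\ the difference quotient $(\cI_{a,b}(f+\varepsilon_nv_n)-\cI_{a,b}(f))/\varepsilon_n$ converges to $\dot{\cal I}_{a,b;f}(w)=-w(x_0)/f'(x_0)$. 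It remains to note that $w\mapsto -w(x_0)/f'(x_0)$ is linear and $\|\cdot\|_\infty$-continuous, since $|\dot{\cal I}_{a,b;f}(w)|\le\|w\|_\infty/|f'(x_0)|$, which completes the verification.

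I expect the main obstacle to be precisely the treatment of the perturbation at the moving test points $x_n^{\pm}$: since $w_n$ is only bounded measurable and $g_n$ need not be continuous, one cannot evaluate at a fixed point, and the convergence $w_n(x_n^{\pm})\to w(x_0)$ must be extracted from uniform convergence of $w_n$ to $w$ combined with the pointwise continuity of the limit $w$ at $x_0$ (this is exactly why the tangent space is $\bB_{{\rm c},x_{0}}[a,b]$ rather than all of $\bB[a,b]$). The monotonicity of $g_n$ is the other indispensable ingredient, as it is what allows the sign of $g_n$ at the two test points to localise the zero-crossing $t_n$ despite possible discontinuities of $g_n$.
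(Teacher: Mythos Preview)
Your proof is correct and is genuinely different from the paper's argument. The paper follows the template of Lemma 3.9.20 in van der Vaart--Wellner: it evaluates the perturbed function $g_n$ at the \emph{unknown} points $t_n\pm\gamma_n$ (with an auxiliary sequence $\gamma_n\le\varepsilon_n^2$), deduces from the definition of $t_n$ the sign inequalities $g_n(t_n-\gamma_n)\ge 0\ge g_n(t_n+\gamma_n)$, and then combines these with a separate proof that $t_n\to x_0$, a boundedness argument for $(t_n-x_0)/\varepsilon_n$ by contradiction, and a first-order expansion of $f$ at $x_0$ via difference quotients $a_n,b_n$. Your approach inverts this logic: you evaluate $g_n$ at the \emph{explicit} test points $x_n^{\pm}=x_0+(c\pm\eta)\varepsilon_n$, compute the sign of $g_n(x_n^{\pm})$ directly from the differentiability of $f$ and the convergence $w_n(x_n^{\pm})\to w(x_0)$, and then let monotonicity of $g_n$ localise $t_n$ between them. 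This is shorter and avoids the auxiliary sequence $\gamma_n$, the separate convergence and boundedness steps, and the contradiction argument; the paper's route, by contrast, is closer to the standard quantile-differentiability proof and perhaps generalises more readily to settings where one does not have a natural candidate $c$ to test against. Both proofs rely in exactly the same way on continuity of $w$ at $x_0$ (to get $w_n(y_n)\to w(x_0)$ for $y_n\to x_0$) and on monotonicity of $g_n$, so your closing remarks correctly identify the essential ingredients.
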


\begin{proof}
The following proof is inspired by the proof of Lemma 3.9.20 in \cite{van der Vaart Wellner 1996}, where an analogous result has been shown under the additional assumption that $f$ is \cadlag. For the convenience of the reader we give a detailed argumentation.

First of all note that $\dot{\cal I}_{a,b;f}(\cdot)$ is obviously continuous w.r.t.\ the sup-norm $\|\cdot\|_\infty$ (and linear). Now, let $(w_{n})\subseteq\bB_{{\rm c},{x_{0}}}[a,b]$ and $(\varepsilon_{n})\subseteq(0,\infty)$ be any sequences such that $f+ \varepsilon_{n} w_{n}\in\bB_{\downarrow\,,0}[a,b]$ for all $n\in\N$, $\varepsilon_{n}\to 0$, and $\|w_{n}- w\|_\infty\to 0$ for some $w\in\bB_{{\rm c},{x_{0}}}[a,b]$. It remains to show
\begin{equation}\label{HDdiffbarkeitQuantile-PROOF-10}
    \lim_{n\to\infty}\Big|\dot{\cal I}_{a,b;f}(w)-\frac{{\cal I}_{a,b}(f+\varepsilon_nw_n)-{\cal I}_{a,b}(f)}{\varepsilon_n}\Big|\,=\,0.
\end{equation}
For (\ref{HDdiffbarkeitQuantile-PROOF-10}) it suffices to show
\begin{eqnarray}
    - w(x_{0})
    & \le & \liminf_{n\to\infty}\,f'(x_{0})\,\frac{\cI_{a,b}(f + \varepsilon_{n} w_{n}) - \cI_{a,b}(f)}{\varepsilon_{n}}\,,\label{HDdiffbarkeitQuantile-PROOF-20}\\
    - w(x_{0})
    & \ge & \limsup_{n\to\infty}\,f'(x_{0})\,\frac{\cI_{a,b}(f + \varepsilon_{n} w_{n}) - \cI_{a,b}(f)}{\varepsilon_{n}}\,.\label{HDdiffbarkeitQuantile-PROOF-30}
\end{eqnarray}
Since $f$ is non-increasing with $f'(x_{0})<0$, we have $f^{-1}(\{0\})=\{x_0\}$. That is, $f(x_0)=0$, $f(x)>0$ for $x\in [a,x_{0})$, and $f(x) < 0$ for $x\in (x_{0},b]$. Furthermore, in view of $\|f-(f+\varepsilon_{n}w_{n})\|_{\infty} = \varepsilon_{n}\|w_{n}\|_{\infty}\to 0$, we may assume without loss of generality that $\cI_{a,b}(f +\varepsilon_{n} w_{n})\in (a,b)$ for all $n\in\N$. Thus we may select a sequence $(\gamma_{n})$ in $(0,1)$ such that $\gamma_{n}\le\varepsilon_{n}^{2}$,
$$
    a < \cI_{a,b}(f+\varepsilon_n w_n)-\gamma_{n},\quad \cI_{a,b}(f+\varepsilon_n w_n) + \gamma_{n} < b,\quad\mbox{and}\quad\cI_{a,b}(f+\varepsilon_n w_n)\pm\gamma_{n}\not=x_{0}
$$
for all $n\in\N$. By the definition of $\cI_{a,b}$ we then have for every $n\in\N$
\begin{equation}\label{BasisUngleichungen}
    (f+\varepsilon_n w_n)(\cI_{a,b}(f+\varepsilon_n w_n) - \gamma_{n})\,\ge\,0\,\ge\,(f+\varepsilon_n w_n)(\cI_{a,b}(f+\varepsilon_n w_n) + \gamma_{n}).
\end{equation}
In Step 2 ahead we will show that also the following three assertions hold:
\begin{equation}\label{KonvergenzInverse}
    \lim_{n\to\infty}\cI_{a,b}(f+\varepsilon_n w_n)\,=\,x_{0}\,=\,\cI_{a,b}(f),
\end{equation}
\begin{equation}\label{boundedness}
    \sup_{n\in\N}\Big|\frac{\cI_{a,b}(f + \varepsilon_{n} w_{n}) - \cI_{a,b}(f)}{\varepsilon_{n}}\Big|\,<\,\infty,
\end{equation}
\begin{equation}\label{KonvergenzTangentialvektoren}
    \lim_{n\to\infty}w_{n}\big(\cI_{a,b}(f+\varepsilon_n w_n) - \gamma_n\big)\,=\,w(x_{0})\,=\,\lim_{n\to\infty}w_{n}\big(\cI_{a,b}(f+\varepsilon_n w_n) + \gamma_n\big).
\end{equation}
Before, we will show in Step 1 that (\ref{BasisUngleichungen})--(\ref{KonvergenzTangentialvektoren}) imply (\ref{HDdiffbarkeitQuantile-PROOF-20})--(\ref{HDdiffbarkeitQuantile-PROOF-30}).

{\em Step 1}. Let
$$
    a_{n}:=\frac{f\big(\cI_{a,b}(f + \varepsilon_{n} w_{n}) + \gamma_{n} \big) - f(x_{0})}{\cI_{a,b}(f + \varepsilon_{n} w_{n}) + \gamma_{n} - x_{0}}\quad\mbox{ and }\quad
    b_{n}:=\frac{f\big(\cI_{a,b}(f + \varepsilon_{n} w_{n}) - \gamma_{n} \big) - f(x_{0})}{\cI_{a,b}(f+\varepsilon_{n} w_{n}) - \gamma_{n} - x_{0}}\,.
$$
By $x_0=\cI_{a,b}(f)$ and $f(x_0)=0$ we have
\begin{eqnarray*}
    -b_n\,\frac{\cI_{a,b}(f+\varepsilon_nw_n)-\cI_{a,b}(f)}{\varepsilon_n}+b_n\,\frac{\gamma_n}{\varepsilon_n}
    & = & -b_n\,\frac{\cI_{a,b}(f+\varepsilon_nw_n)-\gamma_n-x_0}{\varepsilon_n}\\
    & = & -\frac{f(\cI_{a,b}(f+\varepsilon_nw_n)-\gamma_n)-f(x_0)}{\varepsilon_n}\\
    & = & -\frac{f(\cI_{a,b}(f+\varepsilon_nw_n)-\gamma_n)}{\varepsilon_n}\,.
\end{eqnarray*}
Moreover, by (\ref{BasisUngleichungen}) we have
$$
    -w_n(\cI_{a,b}(f+\varepsilon_nw_n)-\gamma_n)\,\le\,\frac{f(\cI_{a,b}(f+\varepsilon_nw_n)-\gamma_n)}{\varepsilon_n}\,.
$$
Hence,
\begin{eqnarray}
    & & - (b_{n} - f'(x_{0}))\,\frac{\cI_{a,b}(f + \varepsilon_{n} w_{n}) - \cI_{a,b}(f)}{\varepsilon_{n}} + b_{n}\,\frac{\gamma_{n}}{\varepsilon_{n}} - w_{n}(\cI_{a,b}(f + \varepsilon_{n} w_{n}) - \gamma_{n})\nonumber\\
    & \le & f'(x_{0})\,\frac{\cI_{a,b}(f + \varepsilon_{n} w_{n}) - \cI_{a,b}(f)}{\varepsilon_{n}}\,.\label{HDdiffbarkeitQuantile-PROOF-100}
\end{eqnarray}
Similarly we obtain
\begin{eqnarray}
    & & f'(x_{0})\,\frac{\cI_{a,b}(f + \varepsilon_{n} w_{n}) - \cI_{a,b}(f)}{\varepsilon_{n}}\label{HDdiffbarkeitQuantile-PROOF-200}\\
    & \le & - (a_{n} - f'(x_{0}))\,\frac{\cI_{a,b}(f + \varepsilon_{n} w_{n}) - \cI_{a,b}(f)}{\varepsilon_{n}} - a_{n}\,\frac{\gamma_{n}}{\varepsilon_{n}} - w_{n}(\cI_{a,b}(f + \varepsilon_{n} w_{n}) + \gamma_{n}).\nonumber
\end{eqnarray}
By differentiability of $f$ at $x_{0}$ and (\ref{KonvergenzInverse}) we obtain that both $(a_{n} - f'(x_{0}))$ and $(b_{n} - f'(x_{0}))$ converge to zero as $n\to\infty$. Along with (\ref{boundedness}) we can conclude that
\begin{eqnarray}
    \lim_{n\to\infty}\,(a_{n} - f'(x_{0}))\,\frac{\cI_{a,b}(f + \varepsilon_{n} w_{n}) - \cI_{a,b}(f)}{\varepsilon_{n}} & = & 0,\nonumber\\
    \lim_{n\to\infty}\,(b_{n} - f'(x_{0}))\,\frac{\cI_{a,b}(f + \varepsilon_{n} w_{n}) - \cI_{a,b}(f)}{\varepsilon_{n}} & = &  0,\label{HDdiffbarkeitQuantile-PROOF-300}
\end{eqnarray}
and along with the choice of $(\gamma_{n})$ we can conclude that
\begin{eqnarray}
    \lim_{n\to\infty}a_{n}\,\frac{\gamma_{n}}{\varepsilon_{n}}\,=\,f'(x_{0})\,\lim_{n\to\infty}\frac{\gamma_{n}}{\varepsilon_{n}}\,=\,0,\nonumber\\
    \lim_{n\to\infty}b_n\,\frac{\gamma_{n}}{\varepsilon_{n}}\,=\,f'(x_{0})\,\lim_{n\to\infty}\frac{\gamma_{n}}{\varepsilon_{n}}\,=\,0.\label{HDdiffbarkeitQuantile-PROOF-400}
\end{eqnarray}
Now, (\ref{HDdiffbarkeitQuantile-PROOF-100})--(\ref{HDdiffbarkeitQuantile-PROOF-200}) along with (\ref{KonvergenzTangentialvektoren}), (\ref{HDdiffbarkeitQuantile-PROOF-300}), and (\ref{HDdiffbarkeitQuantile-PROOF-400}) imply (\ref{HDdiffbarkeitQuantile-PROOF-20})--(\ref{HDdiffbarkeitQuantile-PROOF-30}).

{\em Step 2}. It remains to show (\ref{KonvergenzInverse})--(\ref{KonvergenzTangentialvektoren}). First we show (\ref{KonvergenzInverse}). The inequalities in (\ref{BasisUngleichungen}) imply
$$
    f(\cI_{a,b}(f+\varepsilon_n w_n) - \gamma_{n})\ge- \varepsilon_{n} \|w_{n}\|_{\infty}\quad\mbox{ and }\quad
    f(\cI_{a,b}(f+\varepsilon_n w_n) + \gamma_{n})\le\varepsilon_{n} \|w_{n}\|_{\infty}.
$$
In particular,
\begin{equation}\label{Grenzwerte}
    \liminf_{n\to\infty} f(\cI_{a,b}(f+\varepsilon_n w_n) - \gamma_{n})\,\ge\,0\,\ge\,\limsup_{n\to\infty}f(\cI_{a,b}(f+\varepsilon_n w_n) + \gamma_{n}).
\end{equation}
Now let $\varepsilon > 0$ be such that $a < x_{0}- \varepsilon < x _{0} + \varepsilon < b$. Then $f(x_{0} + \varepsilon/2) < 0 < f(x_{0} - \varepsilon/2)$, and in view of
(\ref{Grenzwerte}) we may find some $n_{0}\in\N$ such that for every $n\geq n_{0}$ we have $\gamma_{n} < \varepsilon/2$ and
\begin{eqnarray*}
    &&
    f\big(\cI_{a,b}(f+\varepsilon_n w_n) - \gamma_{n}\big)\,>\,- |f(x_{0} + \varepsilon/2)|\,=\,f(x_{0} + \varepsilon/2),\\
    &&
    f\big(\cI_{a,b}(f +\varepsilon_n w_n) + \gamma_{n}\big)\,<\,f(x_{0} - \varepsilon/2).
\end{eqnarray*}
Since $f$ is non-increasing, this means that $|\cI_{a,b}(f+\varepsilon_n w_n) - x_{0}| < \varepsilon$ for all $n\geq n_{0}$. That is, (\ref{KonvergenzInverse}) indeed holds.

Next, (\ref{KonvergenzTangentialvektoren}) is an immediate consequence of (\ref{KonvergenzInverse}), $\|w_{n} - w\|_{\infty}\to 0$, $\gamma_n\to0$, and the continuity of $w$ at $x_{0}$.

Finally we will show by way of contradiction that (\ref{boundedness}) holds. So let us first assume that $(\cI(f_{0} + \varepsilon_{i(n)} w_{i(n)}) - x_{0})/\varepsilon_{i(n)}\to - \infty$ holds for some subsequence $(i(n))\subseteq(n)$. By $f_{0}(x_{0}) = 0$ we have
\begin{eqnarray*}
    \lefteqn{\frac{(f + \varepsilon_{i(n)} w_{i(n)})\big(\cI_{a,b}(f + \varepsilon_{i(n)} w_{i(n)}) + \gamma_{i(n)}\big)}{\varepsilon_{i(n)}}}\\
    & = & \frac{(f + \varepsilon_{i(n)} w_{i(n)})\big(\cI_{a,b}(f + \varepsilon_{i(n)} w_{i(n)}) + \gamma_{i(n)}\big) - f(x_{0})}{\varepsilon_{i(n)}}\\
    & = & a_{i(n)}\,\frac{\cI_{a,b}(f + \varepsilon_{i(n)} w_{i(n)}) - x_{0}}{\varepsilon_{i(n)}} + a_{i(n)}\,\frac{\gamma_{i(n)}}{\varepsilon_{i(n)}} + w_{i(n)}(\cI_{a,b}(f +  \varepsilon_{i(n)}w_{i(n)}) + \gamma_{i(n)})
\end{eqnarray*}
for every $n\in\N$. Since $f$ is differentiable at $x_{0}$ with strictly negative derivative, we obtain from (\ref{KonvergenzInverse})
$$
    \lim_{n\to\infty}\,a_{i(n)}\,\frac{\cI_{a,b}(f + \varepsilon_{i(n)} w_{i(n)}) - x_{0}}{\varepsilon_{i(n)}}=\infty
$$
and
$$
    \lim_{n\to\infty}a_{i(n)}\,\frac{\gamma_{i(n)}}{\varepsilon_{i(n)}} =
    f_{0}'(x_{0})\,\lim_{n\to\infty}\frac{\gamma_{i(n)}}{\varepsilon_{i(n)}} = 0.
$$
Therefore in view of (\ref{KonvergenzTangentialvektoren}), we may conclude
\begin{equation}\label{erster Widerspruch}
    \lim_{n\to\infty}\frac{(f + \varepsilon_{i(n)} w_{i(n)})\big(\cI_{a,b}(f + \varepsilon_{i(n)} w_{i(n)}) + \gamma_{i(n)}\big)}{\varepsilon_{i(n)}} = \infty
\end{equation}
which contradicts (\ref{BasisUngleichungen}). In a similar way we obtain a contradiction when supposing that $(\cI_{a,b}(f + \varepsilon_{j(n)} w_{j(n)}) - x_{0})/\varepsilon_{j(n)}\to\infty$ for some subsequence $(j(n))\subseteq(n)$, using $(b_{j(n)})$.
\end{proof}

%%%%%%%%%%%%%%%%%%%%%%%%%%%%%%%%%%%%%%%%%%%%%%%%%%%%%%%%%%%%%%%%
%%%%%%%%%%%%%%%%%%%%%%%%%%%%%%%%%%%%%%%%%%%%%%%%%%%%%%%%%%%%%%%%
%%%%%%%%%%%%%%%%%%%%%%%%%%%%%%%%%%%%%%%%%%%%%%%%%%%%%%%%%%%%%%%%

\subsection{Main part of the proof}

Let $F\in\F_1$ and assume that it is continuous at ${\cal R}_\alpha(F)$. First of all note that the functional $\dot{\cal R}_{\alpha,F}:\bL_1\to\R$ defined by (\ref{def of limit of emp process}) is easily seen to be continuous.

Now, let $(v_{n})\subseteq\bL_1$ and $(\varepsilon_{n})\subseteq(0,\infty)$ be any sequences such that $F + \varepsilon_{n} v_{n}\in \F_{1}$ for all $n\in\N$, $\varepsilon_{n}\to 0$, and $\|v_{n}- v\|_{1,\ell}\to 0$ for some $v\in\bL_1$. In view of part (ii) of Lemma \ref{Def HD of cal U} we may assume without loss of generality that $({\cal R}_{\alpha}(F + \varepsilon_{n} v_{n}))$ is a sequence in $[a,b]$ with $a := {\cal R}_{\alpha}(F) - \varepsilon$ and $b := {\cal R}_{\alpha}(F) + \varepsilon$ for some $\varepsilon > 0$. Setting $f := {\cal U}_{\alpha}(F)|_{[a,b]}$ and $f_{n} := {\cal U}_{\alpha}(F + \varepsilon_{n} v_{n})|_{[a,b]}$ for $n\in\N$, this means that $(f_{n})_{n\in\N_{0}}$ is a sequence in $\bB_{\downarrow,\,0}[a,b]$, and
\begin{eqnarray*}
    \frac{{\cal R}_{\alpha}(F + \varepsilon_{n} v_{n}) - {\cal R}_{\alpha}(F)}{\varepsilon_{n}}
    \,=\,
    \frac{{\cal I}_{a,b}(f_{n}) - {\cal I}_{a,b}(f)}{\varepsilon_{n}}
    \,=\,
    \frac{{\cal I}_{a,b}(f + \varepsilon_{n}\frac{f_{n} -f}{\varepsilon_{n}}) - {\cal I}_{a,b}(f)}{\varepsilon_{n}}
\end{eqnarray*}
for all $n\in\N$. Taking the identity $(1-2\alpha)F({\cal R}_\alpha(F))\,+\,\alpha = (1-\alpha)F({\cal R}_\alpha(F))+\alpha(1-F({\cal R}_\alpha(F)))$ and the definition of $\dot{\cal R}_{\alpha,F}$ by (\ref{def of limit of emp process}) into account, it thus remains to show
\begin{equation}\label{Main part of the proof - 10}
    \lim_{n\to\infty}\,\frac{{\cal I}_{a,b}(f + \varepsilon_{n}\frac{f_{n} -f}{\varepsilon_{n}}) - {\cal I}_{a,b}(f)}{\varepsilon_{n}}
    \,=\,
    \frac{\dot{{\cal U}}_{\alpha}(v)({\cal R}_{\alpha}(F))}{(1-\alpha) F({\cal R}_{\alpha}(F)) + \alpha(1 - F({\cal R}_{\alpha}(F)))}\,,
\end{equation}
where $\dot{\cal U}_{\alpha}$ is given by (\ref{Def HD of cal U}).

We intend to apply Lemma \ref{HDdiffbarkeitQuantile} in order to verify (\ref{Main part of the proof - 10}). By part (i) of Lemma \ref{quasi HD of U} we have
$$
    \lim_{n\to\infty}\,\sup_{m\in[a,b]}\,|w_n(m)-w(m)|\,=\,0
$$
for
$$
    w_n(\cdot):=\frac{f_{n}(\cdot)- f(\cdot)}{\varepsilon_n}\quad\mbox{ and }\quad w(\cdot):=\dot{\cal U}_{\alpha}(v)(\cdot).
$$
According to (\ref{representation int by parts - eq1}) we have
$$
    f(m)\,=\,-(1-\alpha)\int_{(-\infty,m)} F(x)\,\ell(dx)\,+\,\alpha\int_{(m,\infty)} (1-F(x))\,\ell(dx)\quad\mbox{ for all }m\in [a,b].
$$
Since $F$ is continuous at ${\cal R}_{\alpha}(F)$ by assumption, we may apply the second fundamental theorem of calculus for regulated functions to conclude that the function $f$ is differentiable at $x_0:={\cal R}_\alpha(F)={\cal I}_{a,b}(f)\in (a,b)$ with derivative

$$
    f'(x_0)\,=\,-(1-\alpha) F({\cal R}_{\alpha}(F)) - \alpha \big(1 - F({\cal R}_{\alpha}(F))\big)\,<\,0.
$$
Below we will show that $w_n$ and $w$ are continuous on $[a,b]$. So Lemma \ref{HDdiffbarkeitQuantile} implies (\ref{Main part of the proof - 10}).

It remains to show the continuity of $w_n$ and $w$. In view of the definition of $w(\cdot)=\dot{\cal U}_{\alpha}(v)$ due to (\ref{Def HD of cal U}), we obtain by change of variable formula
\begin{eqnarray*}
    |w(m_1)-w(m_2)|
    & = &
    |\dot{\cal U}_{\alpha}(v)(m_{1}) - \dot{\cal U}_{\alpha}(v)(m_{2})|\\
    & \le & (1 - \alpha) \Big|\int_{(m_1,m_2)}v(x)\,\ell(dx)\Big|\,+\,\alpha \Big|\int_{(m_1,m_2)}v(x)\,\ell(dx)\Big|\\
    & \le & \Big|\int_{(m_1,m_2)}v(x)\,\ell(dx)\Big|.
\end{eqnarray*}
Since $v$ as an element of $\bL_1$ is Lebesgue integrable, it follows that $w$ is continuous on $[a,b]$. Further, in view of (\ref{representation int by parts - eq1}) we have
$$
    w_{n}(m)\,=\,(1-\alpha)\int_{(-\infty,m)} v_{n}(x)\,\ell(dx)\,+\,\alpha\int_{(m,\infty)} v_{n}(x)\,\ell(dx) \quad\mbox{ for all }m\in [a,b],
$$
and thus we may show continuity of $w_{n}$ in the same way as we have done for $w$. This completes the proof of Theorem \ref{main theorem}.
\hfill$\Box$

%%%%%%%%%%%%%%%%%%%%%%%%%%%%%%%%%%%%%%%%%%%%%%%%%%%%%%%%%%%%%%%%
%%%%%%%%%%%%%%%%%%%%%%%%%%%%%%%%%%%%%%%%%%%%%%%%%%%%%%%%%%%%%%%%
%%%%%%%%%%%%%%%%%%%%%%%%%%%%%%%%%%%%%%%%%%%%%%%%%%%%%%%%%%%%%%%%
%%%%%%%%%%%%%%%%%%%%%%%%%%%%%%%%%%%%%%%%%%%%%%%%%%%%%%%%%%%%%%%%
%%%%%%%%%%%%%%%%%%%%%%%%%%%%%%%%%%%%%%%%%%%%%%%%%%%%%%%%%%%%%%%%
%%%%%%%%%%%%%%%%%%%%%%%%%%%%%%%%%%%%%%%%%%%%%%%%%%%%%%%%%%%%%%%%

\section{Remaining proofs}\label{Sec Remaining proofs}

\subsection{Proof of Remark \ref{example for cond of main theorem coroll}}\label{Proof of example for cond of main theorem coroll}

For (i) note that finiteness of the integral $\int \phi^2\,dF$ implies that there exists a constant $C>0$ such that $F(y)\le C\phi(y)^{-2}$ for $y<0$ and $1-F(y)\le C\phi(y)^{-2}$ for $y>0$. In view of $\int1/\phi\,d\ell<\infty$, it follows that the integral $\int\sqrt{F(1-F)}\,d\ell$ is finite. Assertion (ii) is trivial. Condition (\ref{main theorem coroll - eq - 20}) implies that $\widetilde{\alpha}(n)\le 1/2$ for sufficiently large $n$, and $Q_{|X_{1}|}(u) u^{-1/2}\geq Q_{|X_{1}|}(1/2)/\sqrt{2}$ holds for $u\in (0,1/2)$ anyway. Thus assertion (iii) follows easily. Concerning assertion (iv) note that by application of Fubini's theorem we may observe
\begin{eqnarray*}
    \frac{1}{2}\int_{(0,\infty)}\widetilde{\alpha}(n)^{1/2}\wedge \pr[|X_1|>x]^{1/2}\,\ell(dx)
    & = &
    \int_{(0,\infty)} \int_{(0,\pr[|X_1|>x])}\eins_{(0,\widetilde{\alpha}(n))}(u)\,u^{-1/2}\,\ell(du)\,\ell(dx)\\
    & = &
    \int_{(0,\widetilde\alpha(n))}Q_{|X_1|}(u)\,u^{-1/2}\,\ell(du).
\end{eqnarray*}
Finally, assertion (v) is a consequence of (iv) and the equivalence of the integrability conditions $\int\sqrt{F(1-F)}\,d\ell<\infty$ and $\int_{(0,\infty)}\pr[|X_1|>x]\,\ell(dx)<\infty$. %(cf.\cite{Dede2009}, p.\,3495).
\hfill$\Box$

%%%%%%%%%%%%%%%%%%%%%%%%%%%%%%%%%%%%%%%%%%%%%%%%%%%%%%%%%%%%%%%%
%%%%%%%%%%%%%%%%%%%%%%%%%%%%%%%%%%%%%%%%%%%%%%%%%%%%%%%%%%%%%%%%
%%%%%%%%%%%%%%%%%%%%%%%%%%%%%%%%%%%%%%%%%%%%%%%%%%%%%%%%%%%%%%%%

\subsection{Proof of Example \ref{Example Consitency Parametric - Lognormal}}\label{Proof of Example Consitency Parametric - Lognormal}

For every $(m_0,s_0^2),(m_n,s_n^2)\in\Theta$ and $f\in{\cal C}_1$ we have
\begin{eqnarray}
    \lefteqn{\Big|\int f\,dF_{(m_n,s^2_n)}-\int f\,dF_{(m_0,s^2_0)}\Big|}\nonumber\\
    & \le & \int_{(0,\infty)} \frac{|f(x)|}{|x|}\,\Big|\frac{1}{\sqrt{2\pi s_n^2}}\,e^{-\{\log(x)-m_n\}^2/\{2s_n^2\}}-\frac{1}{\sqrt{2\pi s_0^2}}\,e^{-\{\log(x)-m_0\}^2/\{2s_0^2\}}\Big|\,\ell(dx)\nonumber\\
    & \le & C_f\int_{(0,\infty)} \Big|\frac{1}{\sqrt{2\pi s_n^2}}\,e^{-\{\log(x)-m_n\}^2/\{2s_n^2\}}-\frac{1}{\sqrt{2\pi s_0^2}}\,e^{-\{\log(x)-m_0\}^2/\{2s_0^2\}}\Big|\,\ell(dx)
    \label{Proof of Example Consitency Parametric - Lognormal - EQ 10}
\end{eqnarray}
for some finite constant $C_f>0$ depending on $f$. If $(m_n,s_n^2)\to(m_0,s_0^2)$, then
$$
    \lim_{n\to\infty}\,\frac{1}{\sqrt{2\pi s_n^2}}\,e^{-\{\log(x)-m_n\}^2/\{2s_n^2\}}=\frac{1}{\sqrt{2\pi s_0^2}}\,e^{-\{\log(x)-m_0\}^2/\{2s_0^2\}}\quad\mbox{ for all }x\in\R
$$
and
\begin{eqnarray*}
    \lim_{n\to\infty}\int_{(0,\infty)}\frac{1}{\sqrt{2\pi s_n^2}}\,e^{-\{\log(x)-m_n\}^2/\{2s_n^2\}}\,\ell(dx)
    &=&
    \lim_{n\to\infty}\int_{\R}x\,dF_{(m_{n},s_{n}^{2})}(x)\\
    &=&
    \lim_{n\to\infty}e^{m_{n} + s_{n}^{2}/2}\\
    &=&
    e^{m_{0} + s_{0}^{2}/2}\\
    &=&
    \int_{\R}x\,dF_{(m_{0},s_{0}^{2})}(x)\\
    &=&
    \int_{(0,\infty)}\frac{1}{\sqrt{2\pi s_0^2}}\,e^{-\{\log(x)-m_0\}^2/\{2s_0^2\}}\,\ell(dx).
\end{eqnarray*}
Since  $\frac{1}{\sqrt{2\pi s_n^2}}\,e^{-\{\log(x)-m_n\}^2/\{2s_n^2\}}\geq 0$ for all $x\in\R$ and $n\in\N$, Lemma 21.6 in \cite{Bauer2001} yields
$$
    \lim_{n\to\infty}\int_{(0,\infty)} \Big|\frac{1}{\sqrt{2\pi s_n^2}}\,e^{-\{\log(x)-m_n\}^2/\{2s_n^2\}}-\frac{1}{\sqrt{2\pi s_0^2}}\,e^{-\{\log(x)-m_0\}^2/\{2s_0^2\}}\Big|\,\ell(dx)=0.
$$
Along with (\ref{Proof of Example Consitency Parametric - Lognormal - EQ 10}) and Remark \ref{remark on Consistency thm - parametric} this shows that the mapping $(m,s^2)\mapsto F_{(m,s^2)}$ is $1$-weakly sequentially continuous at every $(m_0,s_0^2)\in\Theta$.
\hfill$\Box$

%%%%%%%%%%%%%%%%%%%%%%%%%%%%%%%%%%%%%%%%%%%%%%%%%%%%%%%%%%%%%%%%
%%%%%%%%%%%%%%%%%%%%%%%%%%%%%%%%%%%%%%%%%%%%%%%%%%%%%%%%%%%%%%%%
%%%%%%%%%%%%%%%%%%%%%%%%%%%%%%%%%%%%%%%%%%%%%%%%%%%%%%%%%%%%%%%%

\subsection{Proof of Lemma \ref{Asymptotc Distribution cor - parametric}}\label{vereinfachtes Kriterium}

Consider any triplet $(\tau,(\tau_n),(\varepsilon_n))$ with $\tau\in \R^{d}$, $(\tau_n)\subseteq\R^{d}$ satisfying $(\theta_{0} + \varepsilon_{n}\tau_{n})\subseteq\Theta$ as well as $\|\tau_n-\tau\|\to 0$, and $(\varepsilon_n)\subseteq(0,\infty)$ satisfying $\varepsilon_n\to 0$. Since ${\cal V}$ is an open subset of $\R^{d}$ containing $\theta_{0}$, we may assume without loss of generality that $\theta_{0} + \varepsilon_{n}\tau_{n}\in{\cal V}$ for every $n\in\N$. Since $\mathfrak{f}(\,\cdot\,,x)$ is continuously differentiable at $\theta_{0}$ for every $x\in\R$, we may conclude
\begin{eqnarray*}
    \lim_{n\to\infty}\frac{F_{\theta_0+\varepsilon_n\tau_n}(x)- F_{\theta_0}(x)}{\varepsilon_n}\,=\,\lim_{n\to\infty}\frac{\mathfrak{f}(\theta_0+\varepsilon_n\tau_n,x) - \mathfrak{f}(\theta_{0},x)}{\varepsilon_{n}}\,=\,\langle{\rm grad}_{\theta}\,\mathfrak{f}(\theta_0,x),\tau\rangle
\end{eqnarray*}
for every $x\in\R$. Moreover, by the mean value theorem in several variables,
\begin{eqnarray*}
    \Big|\frac{F_{\theta_0+\varepsilon_n\tau_n}(x)- F_{\theta_0}(x)}{\varepsilon_n}\Big|\,\le\,
    \sup_{\theta\in{\cal V}}\,\|{\rm grad}_{\theta}\,\mathfrak{f}(\theta,x)\|\,\|\tau_{n}\|\,\le\,\mathfrak{h}(x)\,\sup_{n\in\N}\|\tau_{n}\|
\end{eqnarray*}
for all $n\in\N$ and $x\in\R$. By assumption, the majorant $\mathfrak{h}$ is $\ell$-integrable. Thus an application of the dominated convergence theorem yields
$$
    \lim_{n\to\infty}\int_{\R}\Big|\frac{F_{\theta_0+\varepsilon_n\tau_n}(x)- F_{\theta_0}(x)}{\varepsilon_n}-\langle{\rm grad}_{\theta}\,\mathfrak{f}(\theta_{0},x),\tau\rangle\Big|\,\ell(dx)\,=\,0.
$$
Thus $\mathfrak{F}$ satisfies the claimed differentiability property.
\hfill$\Box$

%%%%%%%%%%%%%%%%%%%%%%%%%%%%%%%%%%%%%%%%%%%%%%%%%%%%%%%%%%%%%%%%
%%%%%%%%%%%%%%%%%%%%%%%%%%%%%%%%%%%%%%%%%%%%%%%%%%%%%%%%%%%%%%%%
%%%%%%%%%%%%%%%%%%%%%%%%%%%%%%%%%%%%%%%%%%%%%%%%%%%%%%%%%%%%%%%%

\subsection{Proof of Example \ref{Example Asymptotic Distribution Parametric - Lognormal}}\label{Proof of Example Asymptotic Distribution Parametric - Lognormal}

For the first assertion we intend to apply Lemma \ref{Asymptotc Distribution cor - parametric}. To this end we consider the map $\mathfrak{f}:\Theta\times\R\rightarrow[0,1]$ defined by
$$
    \mathfrak{f}((m,s^2),x):= F_{(m,s^2)}(x),
$$
where $\Theta:=\R\times (0,\infty)$. For every $(m,s^{2})\in\Theta$, the distribution function $F_{(m,s^{2})}$ satisfies $F_{(m,s^{2})}(x) = 0$ if $x\leq 0$, and $F_{(m,s^{2})}(x) = \Phi_{(0,1)}((\log(x) - m)/s)$ for $x > 0$. So, obviously, for any $x\in\R$ the map $\mathfrak{f}(\,\cdot\,,x)$ is continuously differentiable on $\Theta$ with gradient
\begin{equation}\label{Gradientendarstellung}
    {\rm grad}_{(m,s^2)}\,\mathfrak{f}((m,s^2),x) =
    \left\{
    \begin{array}{lll}
        -\big(\frac{1}{s},\frac{\log(x) - m}{2 s^{3}}\big)\,\phi_{(0,1)}\big(\frac{\log(x) - m}{s}\big) & , & x> 0\\
        (0,0) & , & x\le 0
    \end{array}
    \right..
\end{equation}
Let $(m_{0},s_{0}^{2})\in\R\times (0,\infty)$, and define the map $\mathfrak{h}:\R\rightarrow\R$ by
$$
    \mathfrak{h}(x) :=
    \left\{
    \begin{array}{lll}
        0 & , & x\leq 0\\[2mm]
        \frac{1}{\sqrt{\pi(s_{0}^{2}-\delta)}}\big(1 + \frac{2}{\delta}\big) & , & 0 < x\leq e^{m_{0} - 2\delta}\\[2mm]
        \frac{1}{\sqrt{\pi}} \Big(\frac{1}{\sqrt{s_{0}^{2} - \delta}} + \frac{C + |m_{0}| + \delta}{\sqrt{s_{0}^{2} - \delta}^{3}} \Big)& , & e^{m_{0} - 2\delta}\leq x\leq e^{m_{0} + \delta}\\[2mm]
        \frac{1}{\sqrt{\pi}}\,e^{-\frac{(\log(x) - m_{0} -\delta)^{2}}{2(s_{0}^{2} + \delta)}} \Big(\frac{1}{\sqrt{s_{0}^{2} - \delta}} + \frac{|\log(x) - m_{0} + \delta|}{\sqrt{s_{0}^{2} - \delta}^{3}} \Big) & , &x\geq e^{m_{0} + \delta}
    \end{array}
    \right..
$$

We will now show that
\begin{equation}\label{vierte Ungleichung}
    \|{\rm grad}_{(m,s^{2})}\,\mathfrak{f}((m,s^{2}),x)\|\le\mathfrak{h}(x)\mbox{ for all }((m,s^{2}),x)\in ((m_{0}-\delta,m_{0} + \delta)\times (s_{0}^{2}-\delta,s_{0}^{2}+ \delta))\times\R
\end{equation}
for some sufficiently small $\delta>0$. Let us choose $\delta > 0$ such that $s_{0}^{2}- 2\delta > 0$. In particular $(m,s^2)\in\Theta$ if $\|(m,s^2) - (m_{0},s_{0}^{2})\| < 2\delta$. For $x\in (0,e^{m_{0} - 2 \delta})$ and $(m,s^{2})\in (m_{0}-\delta,m_{0}+\delta)\times (s_{0}^{2} - \delta, s_{0}^{2} + \delta)$, we obtain
\begin{eqnarray}
    \|{\rm grad}_{(m,s^2)}\,\mathfrak{f}((m,s^2),x)\|
    & \leq & \nonumber
    \sqrt{\frac{2}{2\pi}}\,e^{-\frac{(\log(x) - m)^{2}}{2s^{2}}} \Big(\frac{1}{s} +
    \frac{|\log(x) - m|}{s^{3}} \Big)\\
    & \leq & \nonumber
    \frac{1}{\sqrt{\pi (s_{0}^{2} - \delta)}} + \frac{1}{\sqrt{\pi}}\,\frac{2s^{2} |\log(x) - m|}{s^{3} (\log(x) - m)^{2}}\\
    & \leq & \nonumber
    \frac{1}{\sqrt{\pi (s_{0}^{2} - \delta)}}\Big(1 + \frac{2}{|\log(x) - m|}\Big)\\
    & \leq& \label{erste Ungleichung}
    \frac{1}{\sqrt{\pi (s_{0}^{2} - \delta)}}\Big(1 + \frac{2}{\delta}\Big).
\end{eqnarray}
If $x > e^{m_{0} +  \delta}$ and $(m,s^{2})\in (m_{0}-\delta,m_{0}+\delta)\times (s_{0}^{2} - \delta, s_{0}^{2} + \delta)$, then the inequalities
$|\log(x) - (m_{0} + \delta)|\leq|\log(x) - m|\leq |\log(x) - (m_{0} - \delta)|$ hold, and thus
\begin{eqnarray}
    \|{\rm grad}_{(m,s^2)}\mathfrak{f}((m,s^2),x)\|
    & \leq & \nonumber
    \sqrt{\frac{2}{2\pi}}\,e^{-\frac{(\log(x) - m)^{2}}{2s^{2}}} \Big(\frac{1}{s} +
    \frac{|\log(x) - m|}{s^{3}} \Big)\\
    & \leq & \nonumber
    \frac{1}{\sqrt{\pi}}\,e^{-\frac{(\log(x) - m_{0} -\delta)^{2}}{2s^{2}}} \Big(\frac{1}{s} + \frac{|\log(x) - m_{0} + \delta|}{s^{3}} \Big)\\
    & \leq & \label{zweite Ungleichung}
    \frac{1}{\sqrt{\pi}}\,e^{-\frac{(\log(x) - m_{0} -\delta)^{2}}{2(s_{0}^{2} + \delta)}} \Big(\frac{1}{\sqrt{s_{0}^{2} - \delta}} + \frac{|\log(x) - m_{0} + \delta|}{\sqrt{s_{0}^{2} - \delta}^{3}} \Big).
\end{eqnarray}
Now, let $C := \sup_{x\in [e^{m_{0} - 2\delta},e^{m_{0} + \delta}]}|\log(x)|$. Then for $x\in [e^{m_{0} - 2\delta},e^{m_{0} + \delta}]$ and $(m,s^{2})\in (m_{0}-\delta,m_{0}+\delta)\times (s_{0}^{2} - \delta, s_{0}^{2} + \delta)$, we may observe
\begin{eqnarray}
    \|{\rm grad}_{(m,s^2)}\,\mathfrak{f}((m,s^2),x)\|
    & \leq & \nonumber
    \sqrt{\frac{2}{2\pi}}\,e^{-\frac{(\log(x) - m)^{2}}{2s^{2}}} \Big(\frac{1}{s} +  \frac{|\log(x) - m|}{s^{3}} \Big)\\
    & \leq & \nonumber
    \frac{1}{\sqrt{\pi}}\,\Big(\frac{1}{s} + \frac{|\log(x)| + |m|}{s^{3}} \Big)\\
    & \leq & \label{dritte Ungleichung}
    \frac{1}{\sqrt{\pi}}\,\Big(\frac{1}{\sqrt{s_{0}^{2} - \delta}} + \frac{C + |m_{0}| + \delta}{\sqrt{s_{0}^{2} - \delta}^{3}} \Big).
\end{eqnarray}
By (\ref{erste Ungleichung})--(\ref{dritte Ungleichung}) the function $\mathfrak{h}$ indeed satisfies (\ref{vierte Ungleichung}).

We will next show that $\mathfrak{h}$ is also $\ell$-integrable. For any $\gamma > e^{m_{0} + \delta}$, an application of the change of variable formula yields
\begin{eqnarray*}
    \lefteqn{\int_{(e^{m_{0} + \delta},\gamma)}\frac{1}{\sqrt{\pi}}\,e^{-\frac{(\log(x) - m_{0} -\delta)^{2}}{2(s_{0}^{2} + \delta)}} \Big(\frac{1}{\sqrt{s_{0}^{2} - \delta}} + \frac{|\log(x) - m_{0} + \delta|}{\sqrt{s_{0}^{2} - \delta}^{3}} \Big)\,\ell(dx)}\\
    & = & \int_{m_{0} + \delta}^{\log(\gamma)}\sqrt{\frac{1}{\pi}}\,e^{-\frac{(y - m_{0} -\delta)^{2}}{2(s_{0}^{2} + \delta)}} \Big(\frac{1}{\sqrt{s_{0}^{2} - \delta}} + \frac{| y - m_{0} + \delta|}{\sqrt{s_{0}^{2} - \delta}^{3}} \Big)\,e^{y}\,dy\\
    & = & \int_{0}^{\log(\gamma) - m_{0} - \delta}\frac{1}{\sqrt{\pi}}\,e^{-\frac{z^{2}}{2(s_{0}^{2} + \delta)}} \Big(\frac{1}{\sqrt{s_{0}^{2} - \delta}} + \frac{| z + 2 \delta|}{\sqrt{s_{0}^{2} - \delta}^{3}} \Big)\,e^{z + m_{0} + \delta}\,dz\\
    & = & e^{(2 m_{0} + s_{0}^{2} + \delta)/2}\int_{0}^{\log(\gamma) - m_{0} - \delta}\frac{1}{\sqrt{\pi}}\,e^{-\frac{(z - s_{0}^{2} + \delta)^{2}}{2(s_{0}^{2} + \delta)}} \Big(\frac{1}{\sqrt{s_{0}^{2} - \delta}} + \frac{| z + 2 \delta|}{\sqrt{s_{0}^{2} - \delta}^{3}}\Big)\,dz.
\end{eqnarray*}
Denoting by $Z$ any normally distributed random variable with mean  $s_{0}^{2} - \delta$ and variance $s_{0}^{2} + \delta$ and by $f_Z$ its standard Lebesgue density, we end up with
\begin{eqnarray*}
    \lefteqn{\int_{(e^{m_{0} + \delta},\infty)}|\mathfrak{h}(x)|\,\ell(dx)}\\
    & = & \lim_{\gamma\to\infty} \int_{e^{m_{0} + \delta}}^{\gamma}\frac{1}{\sqrt{\pi}}\,e^{-\frac{(\log(x) - m_{0} -\delta)^{2}}{2(s_{0}^{2} + \delta)}} \Big(\frac{1}{\sqrt{s_{0}^{2} - \delta}} + \frac{|\log(x) - m_{0} + \delta|}{\sqrt{s_{0}^{2} - \delta}^{3}} \Big)\,dx\\
    & \leq & \sqrt{2(s_{0}^{2}+ \delta)}\,e^{(2 m_{0} + s_{0}^{2} + \delta)/2}\int_{0}^{\infty}f_{Z}(z)\Big(\frac{1}{\sqrt{2(s_{0}^{2} - \delta)}} + \frac{| z + 2 \delta|}{\sqrt{s_{0}^{2} -\delta}^{3}} \Big)\,dz\\
    & \leq & \sqrt{2(s_{0}^{2} + \delta)}\,e^{(2 m_{0} + s_{0}^{2} + \delta)/2}\,\ex\Big[\frac{1}{\sqrt{2(s_{0}^{2} - \delta)}} + \frac{| Z + 2 \delta|}{\sqrt{s_{0}^{2} - \delta}^{3}}\Big]\\
    & < & \infty.
\end{eqnarray*}
By definition of $\mathfrak{h}$ this implies that $\mathfrak{h}$ is indeed $\ell$-integrable.

Now, Lemma \ref{Asymptotc Distribution cor - parametric} along with (\ref{Gradientendarstellung}) and (\ref{vierte Ungleichung}) shows that the map $\mathfrak{F}$ is Hadamard differentiable at $(m_0,s_{0}^{2})$ with trace $\bL_1$ and that the Hadamard derivative $\dot \mathfrak{F}_{(m_0,s_0^2)}:\Theta\rightarrow\bL_1$ is given by (\ref{Example Asymptotic Distribution Parametric - Lognormal - derivative}). This proves the first assertion in Example \ref{Example Asymptotic Distribution Parametric - Lognormal}.

For the last assertion in Example \ref{Example Asymptotic Distribution Parametric - Lognormal} we first of all note that $F_{(m_{0},s_{0}^{2})}$ is a continuous function. In particular, it is continuous at $\cR_{\alpha}(F_{(m_{0},s_{0}^{2})})$. It follows by (\ref{def of limit of emp process}) that
\begin{eqnarray}
   \lefteqn{\dot\cR_{\alpha;F_{(m_{0},s_{0}^{2})}}(\dot\mathfrak{F}_{(m_{0},s_{0}^{2})}(\tau_1,\tau_2))}\nonumber\\ %\\[0.1cm] \nonumber
    &=&
    \frac{(1-\alpha) \int_{(0,{\cal R}_\alpha(F_{(m_{0},s_{0}^{2})})^{+})}\phi_{(0,1)}\big(\frac{\log(x) - m_{0}}{s_{0}}\big)\,\big(\frac{\tau_1}{s_{0}} + \frac{(\log(x) - m_{0})\,\tau_2}{s_{0}^{3}}\big)\,\ell(dx)}{(1-2\alpha)F_{(m_{0},s_{0}^{2})}({\cal R}_\alpha(F_{(m_{0},s_{0}^{2})}))+\alpha}\nonumber\\ %[0.1cm]
    & & +\,\frac{\alpha \int_{({\cal R}_\alpha(F_{(m_{0},s_{0}^{2})})^{+},\infty)}\phi_{(0,1)}\big(\frac{\log(x) - m_{0}}{s_{0}}\big)\,\big(\frac{\tau_1}{s_{0}} + \frac{(\log(x) - m_{0})\,\tau_2}{s_{0}^{3}}\big)\,\ell(dx)}{(1-2\alpha)F_{(m_{0},s_{0}^{2})}({\cal R}_\alpha(F_{(m_{0},s_{0}^{2})}))+\alpha}\label{erste Ableitungsformel}
\end{eqnarray}
for all $(\tau_1,\tau_2)\in\Theta$. Let $a := {\cal R}_\alpha(F_{(m_{0},s_{0}^{2})})$. For $b\leq 0$ and any random variable $W$ with distribution function $F_{(m_{0},s_{0}^{2})}$ we have $\alpha\ex[(W-b)^{+}] - (1-\alpha)\ex[(b- W)^{+}] = \alpha (\ex[W] - b) > 0$. Thus $a > 0$ due to (\ref{def expectiles-based shortfall risk measure}). We may apply several times the change of variable formula to obtain
\begin{eqnarray*}
    \lefteqn{\int_{(a,\infty)} \phi_{(0,1)}\Big(\frac{\log(x) - m_{0}}{s_{0}}\Big)\Big(\frac{\tau_1}{s_{0}} + \frac{(\log(x) - m_{0})\tau_2}{s_{0}^{3}}\Big)\,\ell(dx)}\\
    &=&
    \int_{\log(a)}^{\infty} \phi_{(0,1)}\Big(\frac{t - m_{0}}{s_{0}}\Big)\Big(\frac{\tau_1}{s_{0}} + \frac{(t - m_{0})\tau_2}{s_{0}^{3}}\Big)\,e^{t}\,dt\\
    &=&
    \int_{(\log(a) - m_{0})/s_{0}}^{\infty} \phi_{(0,1)}(u)\,\Big(\frac{\tau_1}{s_{0}} + \frac{u\tau_2}{s_{0}^{2}}\Big)\,e^{s_{0} u + m_{0}}\,du\\
    &=&
    e^{m_{0} + s_{0}^{2}/2} \int_{(\log(a) - m_{0})/s_{0}}^{\infty}\frac{e^{-(u - s_{0})^{2}/2}}{\sqrt{2\pi}}\,\Big(\frac{\tau_1}{s_{0}} + \frac{u\tau_2}{s_{0}^{2}}\Big)\,du\\
    &=&
    e^{m_{0} + s_{0}^{2}/2} \int_{(\log(a) - m_{0} - s_{0}^{2})/s_{0}}^{\infty} \frac{e^{-w^{2}/2}}{\sqrt{2\pi}}\,\Big(\frac{\tau_1}{s_{0}} + \frac{(w + s_{0})\tau_2}{s_{0}^{2}}\Big)\,dw\\
    &=& \frac{e^{m_{0} + s_{0}^{2}/2}}{s_{0}}\,\Phi_{(0,1)}\big(\psi(m_{0},s_{0}^{2}, a)\big)\,\tau_1\\
    & & +\,\frac{e^{m_{0} + s_{0}^{2}/2}}{s_{0}^{2}}\Big(s_{0}\Phi_{(0,1)}(\psi(m_{0}, s_{0}^{2},a))+\phi_{(0,1)}(\psi(m_{0},s_{0}^{2},a))\Big)\,\tau_2,
\end{eqnarray*}
where $\psi(m_{0},s_{0}^{2},a) := (m_{0} + s_{0}^{2} - \log(a))/s_{0}$. In the same way we may calculate
\begin{eqnarray*}
    \lefteqn{\int_{(0,a)} \phi_{(0,1)}\Big(\frac{\log(x) - m_{0}}{s_{0}}\Big)\Big(\frac{\tau_1}{s_{0}} + \frac{(\log(x) - m_{0})\tau_2}{s_{0}^{3}}\Big)\,\ell(dx)}\\
    &=&
    \int_{-\infty}^{\log(a)}\phi_{(0,1)}\Big(\frac{t - m_{0}}{s_{0}}\Big)\Big(\frac{\tau_1}{s_{0}} + \frac{(t - m_{0})\tau_2}{s_{0}^{3}}\Big)\,e^{t}\,dt\\
        &=& \frac{e^{m_{0} + s_{0}^{2}/2}}{s_{0}}\,\Phi_{(0,1)}\big(-\psi(m_{0},s_{0}^{2}, a)\big)\,\tau_1\\
    & & +\,\frac{e^{m_{0} + s_{0}^{2}/2}}{s_{0}^{2}}\Big(s_{0}\Phi_{(0,1)}(-\psi(m_{0}, s_{0}^{2},a))+\phi_{(0,1)}(\psi(m_{0},s_{0}^{2},a))\Big)\,\tau_2.
\end{eqnarray*}
Hence in view of (\ref{erste Ableitungsformel}) we end up with
\begin{eqnarray*}
    \lefteqn{\dot\cR_{\alpha;F_{m_{0},s_{0}^{2}}}(\dot\mathfrak{F}_{(m_{0},s_{0}^{2})}(\tau_1,\tau_2))}\\
    &=&
    \frac{e^{m_{0} + s_{0}^{2}/2}}{s_{0}}\,\frac{1 - \alpha - (1 - 2\alpha)\Phi_{(0,1)}\big(\psi(m_{0},s_{0}^{2}, \cR_{\alpha}(F_{(m_{0},s_{0}^{2})}))\big)}{(1-2\alpha)F_{(m_{0},s_{0}^{2})}({\cal R}_\alpha(F_{(m_{0},s_{0}^{2})}))+\alpha}\,(\tau_{1} + \tau_{2})\\
    & & +\,\frac{e^{m_{0} + s_{0}^{2}/2}}{s_{0}^{2}}\,\frac{\phi_{(0,1)}\big(\psi(m_{0},s_{0}^{2},\cR_{\alpha}(F_{(m_{0},s_{0}^{2})}))\big)}{(1-2\alpha)F_{(m_{0},s_{0}^{2})}({\cal R}_\alpha(F_{(m_{0},s_{0}^{2})}))+\alpha}\, \tau_{2}.
\end{eqnarray*}
Therefore, $\dot\cR_{\alpha,F_{(m_{0},s_{0}^{2})}}(\dot\mathfrak{F}_{(m_{0},s_{0}^{2})}(Y_{(m_{0},s_{0}^{2}}))$ is centered normal with variance as in (\ref{Example Asymptotic Distribution Parametric - Lognormal - variance}).
\hfill$\Box$

%%%%%%%%%%%%%%%%%%%%%%%%%%%%%%%%%%%%%%%%%%%%%%%%%%%%%%%%%%%%%%%%
%%%%%%%%%%%%%%%%%%%%%%%%%%%%%%%%%%%%%%%%%%%%%%%%%%%%%%%%%%%%%%%%
%%%%%%%%%%%%%%%%%%%%%%%%%%%%%%%%%%%%%%%%%%%%%%%%%%%%%%%%%%%%%%%%

\subsection{Proof of Example \ref{Example Asymptotic Distribution Parametric - Pareto}}\label{Proof of Example Asymptotic Distribution Parametric - Pareto}

For $b\leq \overline{c}$ and any random variable $W$ with distribution function $F_{a_{0}}$ we have
$$
\alpha\ex[(W-b)^{+}] - (1-\alpha)\ex[(b- W)^{+}] = \alpha (\ex[W] - b) = \frac{\overline{c}}{a - 1} > 0.
$$
Thus $\cR_{\alpha}(F_{a_{0}}) > \overline{c}$ for every $a_{0}\in\Theta$ due to (\ref{def expectiles-based shortfall risk measure}). Now, invoking (\ref{def of limit of emp process}), we may draw on (\ref{Example Asymptotic Distribution Parametric - Pareto - derivative}) to observe that for any $a_{0}\in\Theta$
\begin{eqnarray*}
    \lefteqn{\dot\cR_{\alpha;F_{a_{0}}}(\dot\mathfrak{F}_{a_{0}}(Y_{a_{0}}))}\\
    & = & - Y_{a_{0}}\,\frac{(1 - \alpha)\int_{\overline{c}}^{\cR_{\alpha}(F_{a_{0}})}\log(\overline{c}/x)\,(\overline{c}/x)^{a_{0}}\,dx\,+\,
    \alpha\int_{\cR_{\alpha}(F_{a_{0}})}^{\infty}\log(\overline{c}/x)\,(\overline{c}/x)^{a_{0}}\,dx}
%+ \alpha~ \int_{\cR_{\alpha}(F_{a_{0}})}^{\infty} [\log(\overline{c}) - \log(x)]\overline{c}^{a_{0}} x^{-a_{0}}~dx}
{(1 - 2\alpha) F_{a_{0}}(\cR_{\alpha}(F_{a_{0}}))+ \alpha}\,.
\end{eqnarray*}
Routine calculations yield
\begin{eqnarray*}
    \int_{\beta}^{\gamma}\log(\overline{c}/x)\,(\overline{c}/x)^{a_{0}}\,dx
    & = & \frac{\overline{c}}{(1- a_{0})^{2}}\,(\gamma/\overline{c})^{1 - a_{0}} (1 - (1- a_{0})\log(\gamma/\overline{c}))\\
    & &-\frac{\overline{c}}{(1- a_{0})^{2}}\,(\beta/\overline{c})^{1 - a_{0}} (1 - (1- a_{0})\log(\beta/\overline{c}))
%\left((1 - a_{0})~\big(\log(\gamma/\overline{c}) (\gamma/\overline{c})^{1 - a_{0}} - \log(\beta/\overline{c}) (\beta/\overline{c})^{1 - a_{0}}\big) + (\beta/\overline{c})^{1 - a_{0}} - (\gamma/\overline{c})^{1 - a_{0}} \right)
\end{eqnarray*}
for $a_{0}\in\Theta$ and $\overline{c}\leq\beta < \gamma < \infty$. Hence for every $a_{0}\in\Theta$ the random variable $\dot\cR_{\alpha;F_{a_{0}}}(\dot\mathfrak{F}_{a_{0}}(Y_{a_{0}}))$ is centered normal with variance given by (\ref{Example Asymptotic Distribution Parametric - Pareto - 10}).
\hfill$\Box$
%\begin{eqnarray*}
%\vari[\dot\cR_{\alpha;F_{a_{0}}}(\dot\mathfrak{F}_{a_{0}}(Y_{a_{0}}))]
%=
%\frac{\overline{c}^{2}}{(1- a_{0})^{4}~(1 - 2\alpha)^{2} F_{a_{0}}(\cR_{\alpha}(F_{a_{0}}))^{2}}~\varphi(a_{0},\overline{c})^{2},
%\end{eqnarray*}
%where
%$$
%\varphi(a_{0},\overline{c})
%:=
%(\cR_{\alpha}(F_{a_{0}})/\overline{c})^{1-a_{0}}~\big(1 - (1-a_{0})\log(\cR_{\alpha}(F_{a_{0}})/\overline{c})\big)~(1- 2\alpha) + \alpha - 1
%$$

%%%%%%%%%%%%%%%%%%%%%%%%%%%%%%%%%%%%%%%%%%%%%%%%%%%%%%%%%%%%%%%%
%%%%%%%%%%%%%%%%%%%%%%%%%%%%%%%%%%%%%%%%%%%%%%%%%%%%%%%%%%%%%%%%
%%%%%%%%%%%%%%%%%%%%%%%%%%%%%%%%%%%%%%%%%%%%%%%%%%%%%%%%%%%%%%%%
%%%%%%%%%%%%%%%%%%%%%%%%%%%%%%%%%%%%%%%%%%%%%%%%%%%%%%%%%%%%%%%%
%%%%%%%%%%%%%%%%%%%%%%%%%%%%%%%%%%%%%%%%%%%%%%%%%%%%%%%%%%%%%%%%
%%%%%%%%%%%%%%%%%%%%%%%%%%%%%%%%%%%%%%%%%%%%%%%%%%%%%%%%%%%%%%%%

\appendix

%%%%%%%%%%%%%%%%%%%%%%%%%%%%%%%%%%%%%%%%%%%%%%%%%%%%%%%%%%%%%%%%
%%%%%%%%%%%%%%%%%%%%%%%%%%%%%%%%%%%%%%%%%%%%%%%%%%%%%%%%%%%%%%%%
%%%%%%%%%%%%%%%%%%%%%%%%%%%%%%%%%%%%%%%%%%%%%%%%%%%%%%%%%%%%%%%%

\section{Expectiles as risk measures on $L^1$}\label{expectiles as risk measures}

Let $(\Omega,{\cal F},\pr)$ be an atomless probability space and use $L^1=L^1(\Omega,{\cal F},\pr)$ to denote the usual $L^1$-space. Pick $\alpha\in(0,1)$ and let $\U_\alpha$ be as in (\ref{Def Mapping bb U alpha}).

\begin{lemma}\label{representation int by parts - mathbb}
For every $X\in L^1$, the mapping $m\mapsto\mathbb{U}_{\alpha}(X)(m)$ is real-valued, continuous, strictly decreasing, and satisfies $\lim_{m\to\pm\infty}\mathbb{U}_\alpha(X)(m)=\mp\infty$. In addition it may be represented by
\begin{eqnarray}
    \mathbb{U}_\alpha(X)(m)
    & = & -(1-\alpha)\int_{(-\infty,m)} F_X(x)\,\ell(dx)\,+\,\alpha\int_{(m,\infty)} (1-F_X(x))\,\ell(dx)\label{representation int by parts - mathbb - eq - 1}\\
    & = & -(1-\alpha)\int_{(-\infty,0)} F_X(x+m)\,\ell(dx)\,+\,\alpha\int_{(0,\infty)} (1-F_X(x+m))\,\ell(dx),\qquad\label{representation int by parts - mathbb - eq - 2}
\end{eqnarray}
where $F_X$ is the distribution function of $X$.
\end{lemma}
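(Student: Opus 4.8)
The plan is to decompose $U_\alpha$ into its positive and negative parts, reduce each asserted property to a standard fact about the maps $m\mapsto\ex[(X-m)^+]$ and $m\mapsto\ex[(m-X)^+]$, and read the integral representation off the layer-cake formula. First I would write $U_\alpha(x)=\alpha\,x^+-(1-\alpha)\,x^-$, where $x^\pm$ denote the positive and negative parts, so that
$$
    \mathbb{U}_\alpha(X)(m)\,=\,\alpha\,\ex[(X-m)^+]-(1-\alpha)\,\ex[(m-X)^+].
$$
Since $X\in L^1$ gives $|X-m|\in L^1$ for every $m\in\R$, both expectations are finite, which establishes that $m\mapsto\mathbb{U}_\alpha(X)(m)$ is real-valued.

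For the representation (\ref{representation int by parts - mathbb - eq - 1}) I would invoke the standard tail-integral identities
$$
    \ex[(X-m)^+]=\int_{(m,\infty)}(1-F_X(x))\,\ell(dx),\qquad \ex[(m-X)^+]=\int_{(-\infty,m)}F_X(x)\,\ell(dx),
$$
obtained from Fubini's theorem by writing $(X-m)^+=\int_{(0,\infty)}\eins_{\{X>m+t\}}\,\ell(dt)$ (and analogously for the negative part) and substituting. Here the distinction between $F_X$ and its left-continuous version is immaterial, since the two agree outside an at most countable, hence $\ell$-null, set. Substituting gives (\ref{representation int by parts - mathbb - eq - 1}), and the change of variables $x\mapsto x+m$ in both integrals yields (\ref{representation int by parts - mathbb - eq - 2}). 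Finiteness of the integrals is precisely the $L^1$-characterization $\int_{(-\infty,0)}F_X\,d\ell<\infty$ and $\int_{(0,\infty)}(1-F_X)\,d\ell<\infty$ recalled in the introduction.

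Continuity follows from the Lipschitz property of $U_\alpha$: since $U_\alpha$ has slopes $\alpha$ and $1-\alpha$, it satisfies $|U_\alpha(s)-U_\alpha(t)|\le\max\{\alpha,1-\alpha\}\,|s-t|$, whence $|\mathbb{U}_\alpha(X)(m_1)-\mathbb{U}_\alpha(X)(m_2)|\le\max\{\alpha,1-\alpha\}\,|m_1-m_2|$. For strict monotonicity I would use that $U_\alpha$ is strictly increasing (both slopes are positive): for $m_1<m_2$ we have $X-m_1>X-m_2$ pointwise, so $U_\alpha(X-m_1)-U_\alpha(X-m_2)>0$ everywhere, and taking expectations gives $\mathbb{U}_\alpha(X)(m_1)>\mathbb{U}_\alpha(X)(m_2)$. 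For the limits I would read them off (\ref{representation int by parts - mathbb - eq - 1}): as $m\to+\infty$ the second term tends to $0$ while $\int_{(-\infty,m)}F_X\,d\ell\to\infty$ (because $F_X(x)\to1$), so $\mathbb{U}_\alpha(X)(m)\to-\infty$; the case $m\to-\infty$ is symmetric. Monotone convergence applied to $(m-X)^+$ and $(X-m)^+$ yields the same conclusion directly.

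I do not expect a serious obstacle here; the only point that demands care is the rigorous justification of the tail-integral identities and the attendant bookkeeping between $F_X(x)$ and $F_X(x-)$, which is harmless because the two differ only on an $\ell$-null set. Everything else is a direct consequence of the decomposition $U_\alpha=\alpha(\cdot)^+-(1-\alpha)(\cdot)^-$ together with the strict monotonicity and Lipschitz structure of $U_\alpha$.
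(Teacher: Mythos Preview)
Your proof is correct and follows essentially the same route as the paper: decompose $U_\alpha(x)=\alpha\,x^+-(1-\alpha)\,x^-$, derive the representation via the tail-integral identities for $\ex[(X-m)^+]$ and $\ex[(m-X)^+]$, and obtain the second form by the substitution $x\mapsto x+m$. The only cosmetic differences are that the paper proves continuity by dominated convergence and strict monotonicity from the integral representation, whereas you use the Lipschitz constant of $U_\alpha$ and its pointwise strict monotonicity; both are equally valid and yours is arguably cleaner.
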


\begin{proof}
First of all note that the expectation $\ex[U_\alpha(X-m)]$ exists for every $m\in\R$, because $X\in L^1$. Further, we have
\begin{eqnarray*}
    \lefteqn{\mathbb{U}_\alpha(X)(m)}\\
    & = &
    \alpha\,\ex[(X - m)^{+}]\,-\, (1 - \alpha)\,\ex[(- X - (-m) )^{+} ]\\
    & = & \alpha\int_{(m,\infty)}(1 - F_X(x))\,\ell(dx)\,-\,(1- \alpha)\int_{(-m,\infty)} (1 - F_{-X}(x))\,\ell(dx)\\
    & = & \alpha\int_{(m,\infty)}(1 - F_X(x))\,\ell(dx)\,-\, (1- \alpha)\int_{(-m,\infty)} F_X(-x)\,\ell(dx)\quad\mbox{for all }m\in\R.
\end{eqnarray*}
That is, (\ref{representation int by parts - mathbb - eq - 1}) holds. Applying Change of Variable to both integrals yields (\ref{representation int by parts - mathbb - eq - 2}).

For every $m\in\R$ and $\varepsilon > 0$, we obtain by (\ref{representation int by parts - mathbb - eq - 1}) that
$$
    \mathbb{U}_{\alpha}(X)(m + \varepsilon) - \mathbb{U}_{\alpha}(X)(m)\,=\,-\int_{(m,m + \varepsilon)} \Big((1 - \alpha)F_X(X) + \alpha (1 - F_X(x))\Big)\,\ell(dx)\,<\,0.
$$
Thus, $m\mapsto\mathbb{U}_{\alpha}(X)(m)$ is strictly decreasing. By the dominated convergence theorem we also have
$$
    \lim_{\widetilde{m}\to m}\ex[(X - \widetilde{m})^{+}] = \ex[(X - m)^{+}]\quad\mbox{ and }\quad\lim_{\widetilde{m}\to m}\ex[(\widetilde{m} - X)^{+}] = \ex[(m - X)^{+}]
$$
for every $m\in\R$, implying continuity of $m\mapsto\mathbb{U}_{\alpha}(X)(m)$. By the Monotone Convergence theorem, we have
$$
    \lim_{m\to -\infty}\ex[(X - m)^{+}]\,=\,\infty\quad\mbox{ and }\quad\lim_{m\to\infty}\ex[(m - X)^{+}]\,=\,\infty,
$$
and by the dominated convergence theorem we obtain
$$
    \lim_{m\to \infty}\ex[(X - m)^{+}]\,=\,0\quad\mbox{ and }\quad\lim_{m\to-\infty}\ex[(m - X)^{+}]\,=\,0.
$$
This gives
$$
    \lim_{m\to -\infty}\mathbb{U}_{\alpha}(X)(m)\,=\,\infty\quad\mbox{ and }\quad\lim_{m\to \infty}\mathbb{U}_{\alpha}(X)(m)\,=\,-\infty.
$$
The proof is complete.
\end{proof}

Lemma \ref{representation int by parts - mathbb} ensures that (\ref{def expectiles-based shortfall risk measure}) defines a map $\rho_\alpha:L^1\to\R$. The following proposition shows that this map is a coherent risk measure when $1/2\le\alpha<1$.

\begin{proposition}\label{Expectiles as coherent RM}
The map $\rho_{\alpha}:L^1\to\R$ is monotone, cash-invariant, positively homogeneous, and continuous w.r.t.\ the $L^{1}$-norm $\|\cdot\|_{1}$. It is subadditive (and thus coherent) if and only if $1/2\le\alpha<1$. If $0<\alpha<1/2$, then the map $\check\rho_{\alpha}:L^1\to\R$ defined by $\check\rho_\alpha(X):=-\rho_\alpha(-X)$ provides a $\|\cdot\|_1$-continuous coherent risk measure.
\end{proposition}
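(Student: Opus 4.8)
The plan is to verify the three algebraic properties first, deduce $\|\cdot\|_1$-continuity from the distributional functional, and then concentrate on subadditivity, which is the crux. Throughout I use the characterization $\rho_\alpha(X)=\mathbb{U}_\alpha(X)^{-1}(0)$ from (\ref{def expectiles-based shortfall risk measure}), together with Lemma \ref{representation int by parts - mathbb}, by which $m\mapsto\mathbb{U}_\alpha(X)(m)$ is continuous, strictly decreasing and changes sign, so that $\rho_\alpha(X)$ is its unique zero. For monotonicity: if $X_1\le X_2$ then $U_\alpha(X_1-m)\le U_\alpha(X_2-m)$ pointwise since $U_\alpha$ is non-decreasing, whence $\mathbb{U}_\alpha(X_1)(m)\le\mathbb{U}_\alpha(X_2)(m)$ for every $m$; evaluating at $m=\rho_\alpha(X_1)$ yields $\mathbb{U}_\alpha(X_2)(\rho_\alpha(X_1))\ge0$, and as $\mathbb{U}_\alpha(X_2)$ is strictly decreasing with zero at $\rho_\alpha(X_2)$ this forces $\rho_\alpha(X_1)\le\rho_\alpha(X_2)$. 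Cash-invariance and positive homogeneity follow from the identities $\mathbb{U}_\alpha(X+c)(m)=\mathbb{U}_\alpha(X)(m-c)$ and, for $\lambda>0$, $\mathbb{U}_\alpha(\lambda X)(m)=\lambda\,\mathbb{U}_\alpha(X)(m/\lambda)$ (the case $\lambda=0$ reducing to $\rho_\alpha(0)=0$), which transport the zero of $\mathbb{U}_\alpha(\cdot)$ exactly as required.

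For $\|\cdot\|_1$-continuity I would pass through the statistical functional. If $\|X_n-X\|_1\to0$, then coupling $X_n$ and $X$ gives $d_{\mbox{\scriptsize{\rm W}},1}(F_{X_n},F_X)\le\ex[|X_n-X|]\to0$, so $F_{X_n}\to F_X$ in the $1$-weak topology. Theorem \ref{1 weak continuity of R alpha}, whose proof rests on Bellini et al.\ and not on the present proposition, then gives $\rho_\alpha(X_n)={\cal R}_\alpha(F_{X_n})\to{\cal R}_\alpha(F_X)=\rho_\alpha(X)$, so $\rho_\alpha$ is $\|\cdot\|_1$-continuous.

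The main work is subadditivity for $1/2\le\alpha<1$, and here I would lift the $L^2$-result (Proposition 6 of \cite{Bellinietal2014}) to $L^1$ by truncation. Given $X_1,X_2\in L^1$, set $X_i^{(k)}:=(X_i\wedge k)\vee(-k)\in L^\infty\subseteq L^2$; by dominated convergence $X_i^{(k)}\to X_i$ and $X_1^{(k)}+X_2^{(k)}\to X_1+X_2$ in $L^1$. Applying $L^2$-subadditivity to $X_1^{(k)},X_2^{(k)}$ and letting $k\to\infty$ using the $\|\cdot\|_1$-continuity just established yields $\rho_\alpha(X_1+X_2)\le\rho_\alpha(X_1)+\rho_\alpha(X_2)$; compatibility of the $L^1$- and $L^2$-definitions of $\rho_\alpha$ on $L^2$ is what makes this legitimate. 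For the converse ``only if'' I would simply re-use the $L^2$ counterexample behind Proposition 6 of \cite{Bellinietal2014}: any witness of non-subadditivity for $0<\alpha<1/2$ lies in $L^2\subseteq L^1$ and hence witnesses it in $L^1$ as well. As an alternative one could prove $L^1$-subadditivity directly from the dual representation $\rho_\alpha(X)=\sup\{\ex[X\varphi]:\varphi\ge0,\ \ex[\varphi]=1,\ \esssup\varphi\le\tfrac{\alpha}{1-\alpha}\,\essinf\varphi\}$, which exhibits $\rho_\alpha$ as a supremum of linear functionals and is therefore sublinear; but establishing this representation on $L^1$ is itself the obstacle and carries the same difficulty.

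Finally, the assertion on $\check\rho_\alpha$ for $0<\alpha<1/2$ reduces to the identity $\check\rho_\alpha=\rho_{1-\alpha}$. Writing $e:=\rho_\alpha(-X)$, the defining equation $\alpha\,\ex[(-X-e)^+]=(1-\alpha)\,\ex[(e+X)^+]$ becomes, with $e':=-e$, the relation $(1-\alpha)\,\ex[(X-e')^+]=\alpha\,\ex[(e'-X)^+]$, which is precisely the equation characterizing $\rho_{1-\alpha}(X)$; hence $\check\rho_\alpha(X)=-\rho_\alpha(-X)=e'=\rho_{1-\alpha}(X)$. Since $0<\alpha<1/2$ gives $1/2<1-\alpha<1$, the coherence and $\|\cdot\|_1$-continuity already proved for $\rho_{1-\alpha}$ transfer verbatim to $\check\rho_\alpha$. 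I expect the subadditivity step, specifically the justification of the passage from $L^2$ to $L^1$ and the exclusion of $\alpha<1/2$, to be the principal obstacle, everything else being bookkeeping with the strictly decreasing profile $\mathbb{U}_\alpha(X)(\cdot)$.
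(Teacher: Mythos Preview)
Your proof is correct, but the route differs from the paper's in two places. For subadditivity when $1/2\le\alpha<1$, the paper does not lift the $L^2$-result by truncation; instead it argues directly on $L^1$ by observing that $U_\alpha$ is convex, so that
\[
0=\lambda\,\U_\alpha(X_1)(\rho_\alpha(X_1))+(1-\lambda)\,\U_\alpha(X_2)(\rho_\alpha(X_2))\ge \U_\alpha(\lambda X_1+(1-\lambda)X_2)\big(\lambda\rho_\alpha(X_1)+(1-\lambda)\rho_\alpha(X_2)\big),
\]
and strict monotonicity of $m\mapsto\U_\alpha(\cdot)(m)$ then yields convexity of $\rho_\alpha$, hence subadditivity via positive homogeneity. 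Correspondingly, the paper deduces $\|\cdot\|_1$-continuity \emph{after} coherence, by invoking Cheridito--Li (every coherent risk measure on $L^1$ is $\|\cdot\|_1$-continuous), rather than by going through Theorem~\ref{1 weak continuity of R alpha} as you do. Your approach is perfectly valid---and you are right to note that the Bellini et al.\ proof of Theorem~\ref{1 weak continuity of R alpha} avoids circularity---but it imports more external machinery, while the paper's convexity argument is self-contained and parallels the concavity argument used for $0<\alpha<1/2$. Your treatment of $\check\rho_\alpha$ via the identity $\check\rho_\alpha=\rho_{1-\alpha}$ is in fact cleaner than the paper's, which reproves convexity of $\check\rho_\alpha$ from concavity of $U_\alpha$ rather than recognizing the symmetry.
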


\begin{proof}
In view of Lemma \ref{representation int by parts - mathbb} it may be verified easily that the map $\rho_\alpha$ is monotone, cash-invariant, and positively homogeneous. Concerning subadditivity for $1/2\le\alpha<1$, we want to show that $\rho_{\alpha}$ is a convex mapping. For this purposes let $X_{1}, X_{2}\in L^{1}$ and $\lambda\in [0,1]$. By convexity of $U_{\alpha}$ we may observe
\begin{eqnarray*}
    0
    & \ge &
    \lambda\mathbb{U}_{\alpha}(X_1)(\rho_{\alpha}(X_1)) + (1-\lambda)\mathbb{U}_\alpha(X_2)(\rho_{\alpha}(X_2))\\
    & = & \lambda\ex[{U}_{\alpha}(X_1-\rho_{\alpha}(X_1))] + (1-\lambda)\ex[U_\alpha(X_2-\rho_{\alpha}(X_2))]\\
    & \ge & \ex[{U}_{\alpha}(\lambda(X_1-\rho_{\alpha}(X_1))+ (1-\lambda)(X_2-\rho_{\alpha}(X_2)))]\\
    & = & \mathbb{U}_{\alpha}(\lambda X_1 + (1-\lambda)X_2)(\lambda\rho_{\alpha}(X_1) + (1-\lambda)\rho_{\alpha}(X_2)).
\end{eqnarray*}
Since by Lemma \ref{representation int by parts - mathbb} the mapping $m\mapsto\mathbb{U}_{\alpha}(X)(m)$ is strictly decreasing for any $X\in L^{1}$, we may conclude
$$
    \lambda\rho_{\alpha}(X_1) + (1-\lambda)\rho_{\alpha}(X_2)\ge\rho_{\alpha}(\lambda X_1 + (1-\lambda)X_2).
$$
This shows convexity of $\rho_{\alpha}$ which along with positive homogeneity implies subadditivity. Hence $\rho_{\alpha}$ is a coherent risk measure on $L^{1}$. so that it is also continuous w.r.t. $\|\cdot\|_{1}$ due to Theorem 4.1 in \cite{CheriditoLi2009}. Moreover, by Proposition 6 in \cite{Bellinietal2014}, the restriction of $\rho_{\alpha}$ to $L^{2}$ is a coherent risk measure if and only if $1/2\le\alpha<1$. This proves the first part of Proposition \ref{Expectiles as coherent RM}, except the $\|\cdot\|_1$-continuity for $0<\alpha<1/2$.

To prove the second part, let $0<\alpha<1/2$. In this case the mapping $m\mapsto U_{\alpha}(m)$ is concave, which implies
\begin{eqnarray*}
    0
    & \le &
    \lambda\mathbb{U}_{\alpha}(X_1)(\rho_{\alpha}(X_1)) + (1-\lambda)\mathbb{U}_\alpha(X_2)(\rho_{\alpha}(X_2))\\
    & = & \lambda\ex[{U}_{\alpha}(X_1-\rho_{\alpha}(X_1))] + (1-\lambda)\ex[U_\alpha(X_2-\rho_{\alpha}(X_2))]\\
    & \le & \ex[{U}_{\alpha}(\lambda(X_1-\rho_{\alpha}(X_1))+ (1-\lambda)(X_2-\rho_{\alpha}(X_2)))]\\
    & = & \mathbb{U}_{\alpha}(\lambda X_1 + (1-\lambda)X_2)(\lambda\rho_{\alpha}(X_1) + (1-\lambda)\rho_{\alpha}(X_2))
\end{eqnarray*}
for any $X_1,X_2\in L^{1}$ and $\lambda\in[0,1]$. Again by Lemma \ref{representation int by parts - mathbb} the mapping $m\mapsto\mathbb{U}_{\alpha}(X)(m)$ is strictly decreasing for any $X\in L^{1}$. We thus obtain
$$
    \lambda\rho_{\alpha}(X_1) + (1-\lambda)\rho_{\alpha}(X_2)\le\rho_{\alpha}(\lambda X_1 + (1-\lambda)X_2)\quad\mbox{ for all }X_1,X_2\in L^{1},\,\lambda\in [0,1].
$$
Therefore the map $\check{\rho}_\alpha:L^1\rightarrow\R$ defined by $\check\rho_\alpha(X):=-\rho_\alpha(-X)$ is convex. It is also monotone, cash-invariant, positively homogeneous, and thus a coherent risk measure; note again that subadditivity follows from convexity under the condition of positive homogeneity. As a coherent risk measure on $L^{1}$, the map $\check\rho_{\alpha}$ is $\|\cdot\|_1$-continuous, drawing again on \cite[Theorem 4.1]{CheriditoLi2009}. Then, obviously, $\rho_{\alpha}$ is $\|\cdot\|_1$-continuous too which completes the proof.
\end{proof}

%%%%%%%%%%%%%%%%%%%%%%%%%%%%%%%%%%%%%%%%%%%%%%%%%%%%%%%%%%%%%%%%
%%%%%%%%%%%%%%%%%%%%%%%%%%%%%%%%%%%%%%%%%%%%%%%%%%%%%%%%%%%%%%%%
%%%%%%%%%%%%%%%%%%%%%%%%%%%%%%%%%%%%%%%%%%%%%%%%%%%%%%%%%%%%%%%%

\section{Quasi-Hadamard differentiability and functional delta-method}\label{appendix QHD and FDM}

Let $\V$ and $\widetilde\V$ be vector spaces, and let $\E\subseteq\V$ and $\widetilde\E\subseteq\widetilde\V$ be subspaces equipped with norms $\|\cdot\|_{\E}$ and $\|\cdot\|_{\widetilde\E}$, respectively.

\begin{definition}\label{definition quasi hadamard}
Let $H:\V_H\rightarrow\widetilde\V$ be a map defined on a subset $\V_H\subseteq\V$, and $\E_0$ be a subset of $\E$. Then $H$ is said to be quasi-Hadamard differentiable at $x\in \V_H$ tangentially to $\E_0\langle\E\rangle$ with trace $\widetilde\bE$ if $H(x)-H(y)\in\widetilde\bE$ for all $y\in\V_H$ and there exists a continuous map $\dot H_x:\E_0\to\widetilde\E$ such that
\begin{eqnarray}\label{def eq for HD}
    \lim_{n\to\infty}\Big\|\dot H_x(x_0)-\frac{H(x+\varepsilon_nx_n)-H(x)}{\varepsilon_n}\Big\|_{\widetilde\bE}\,=\,0
\end{eqnarray}
holds for each triplet $(x_0,(x_n),(\varepsilon_n))$, with $x_0\in\E_0$, $(\varepsilon_n)\subseteq(0,\infty)$ satisfying $\varepsilon_n\to 0$, $(x_n)\subseteq\E$ satisfying $\|x_n-x_0\|_{\E}\to 0$ as well as $(x+\varepsilon_nx_n)\subseteq\V_H$. In this case the map $\dot H_x$ is called quasi-Hadamard derivative of $H$ at $x$ tangentially to $\E_0\langle\E\rangle$ with trace $\widetilde\bE$.
\end{definition}

\begin{remarknorm}\label{definition quasi hadamard - remark}
(i) When $\widetilde\V=\widetilde\bE$, then $H(x)-H(y)\in\widetilde\bE$ automatically holds for all $x,y\in\V_H$ and the notion of quasi-Hadamard differentiability of $H$ at $x\in \V_H$ tangentially to $\E_0\langle\E\rangle$ with trace $\widetilde\bE$ coincides with the notion of quasi-Hadamard differentiability of $H$ at $x$ tangentially to $\E_0\langle\E\rangle$ as introduced in \cite{BeutnerZaehle2010,BeutnerZaehle2015}.

(ii) When $\widetilde\V=\widetilde\bE$, $\E_0=\E$, and $\|\cdot\|_{\E}$ provides a norm on all of $\V$, then the notion of quasi-Hadamard differentiability of $H$ at $x$ tangentially to $\E_0\langle\E\rangle$ coincides with the classical notion of Hadamard differentiability at $x$ tangentially to $\E$ as defined in \cite{Gill1989}. However, in general the Hadamard derivative of $H$ at $x$ tangentially to $\E$ is not necessarily the same as the quasi-Hadamard derivative of $H$ tangentially to $\E\langle\E\rangle$, because in the latter case the norm $\|\cdot\|_{\E}$ may be defined only on $\E$ (and not on all of $\V$).

(iii) When $\E_0=\E$ and $\|\cdot\|_{\E}$ provides a norm on all of $\V$ then we skip the prefix ``quasi'' and speak of Hadamard differentiability of $H$ at $x$ tangentially to $\bE$ with trace $\widetilde\bE$, and when even $\E_0=\E=\V$ then we in addition skip the suffix ``tangentially to $\bE$''.
{\hspace*{\fill}$\Diamond$\par\bigskip}
\end{remarknorm}

The discussion in part (ii) of the preceding remark shows in particular that quasi-Hadamard differentiability is a weaker notion of ``differentiability'' than the classical (tangential) Hadamard differentiability. However, Theorem \ref{delta method for the bootstrap} shows that this notion is still strong enough to obtain a functional delta-method (even for the bootstrap).

Denote by ${\cal B}^\circ$ the $\sigma$-algebra on $\bE$ that is generated by the open balls. Convergence in distribution in $\bE$ will be considered for the open-ball $\sigma$-algebra. More precisely, let $(\xi_n)$ be a sequence of $(\bE,{\cal B}^\circ)$-valued random variables on a probability space $(\Omega',{\cal F}',\pr')$, and $\xi$ be an $(\bE,{\cal B}^\circ)$-valued random variable on some probability space $(\check\Omega,\check{\cal F},\check\pr)$. Then $(\xi_n)$ is said to converge in distribution to $\xi$, in symbols $\xi_n\leadsto^\circ\xi$, if $\int f\,d\pr_{\xi_n}'\to\int f\,d\check\pr_{\xi}$ for all bounded, continuous and $({\cal B}^\circ,{\cal B}(\R))$-measurable functions $f:\bE\rightarrow\R$. Note that, whenever $\xi$ concentrates on a separable measurable set, we have $\xi_n\leadsto^\circ\xi$ if and only if $\varrho_{\scriptsize{\rm BL}}^\circ(\pr_{\xi_n},\check\pr_{\xi})\to 0$, where $\varrho_{\scriptsize{\rm BL}}^\circ$ is the bounded Lipschitz distance defined by
$$
    \varrho_{\scriptsize{\rm BL}}^\circ(\mu,\nu)\,:=\,\sup_{f\in{\rm BL}_1^\circ}\Big|\int f\,d\mu-\int f\,d\nu\Big|
$$
with ${\rm BL}_1^\circ$ the set of all $({\cal B}^\circ,{\cal B}(\R))$-measurable $f:\bE\rightarrow\R$ satisfying $|f(x)-f(y)|\le\|x-y\|_{\bE}$ for all $x,y\in\bE$ and $\sup_{x\in\bE}|f(x)|\le 1$. If $(\bE,\|\cdot\|_{\bE})$ is separable, then ${\cal B}^\circ$ coincides with the Borel $\sigma$-algebra ${\cal B}$ on $\bE$. Then the notion of convergence $\xi_n\leadsto^\circ\xi$ boils down to the conventional notion of convergence in distribution, because every continuous function $f:\bE\rightarrow\R$ is $({\cal B},{\cal B}(\R))$-measurable. We then also write $\xi_n\leadsto\xi$ and $\varrho_{\scriptsize{\rm BL}}$ instead of $\xi_n\leadsto^\circ\xi$ and $\varrho_{\scriptsize{\rm BL}}^\circ$, respectively.

Now, let $(\Omega,{\cal F},\pr)$ be a probability space, and $(\widehat T_n)$ be a sequence of maps $\widehat T_n:\Omega\rightarrow\V$. Regard $\omega\in\Omega$ as a sample drawn from $\pr$, and $\widehat T_n(\omega)$ as a statistic derived from $\omega$. Let $\theta\in\V$, and $(a_n)$ be a sequence of positive real numbers tending to $\infty$. Let $(\Omega',{\cal F}',\pr')$ be another probability space and set $(\overline\Omega,\overline{\cal F},\overline{\pr}):=(\Omega\times\Omega',{\cal F}\otimes{\cal F}',\pr\otimes\pr')$. The probability measure $\pr'$ represents a random experiment that is run independently of the random sample mechanism $\pr$. Below, $\widehat T_n$ will also be regarded as a map defined on the extension $\overline\Omega$ of $\Omega$. Let $(\widehat T_n^*)$ be a sequence of maps $\widehat T_n^*: \overline\Omega\rightarrow\V$. Finally denote by $\widetilde{\cal B}$ and $\widetilde\varrho_{\scriptsize{\rm BL}}$ the Borel $\sigma$-algebra on $\widetilde\bE$ and the bounded Lipschitz distance on $\widetilde\bE$, respectively. The following theorem is a slight generalization of Theorem 3.1 in \cite{BeutnerZaehle2015}; one can use the same proof with the obvious (minor) modifications.

\begin{theorem}\label{delta method for the bootstrap}
Let $H:\V_H\to\widetilde\E$ be a map defined on a subset $\V_H\subseteq\V$. Let $\bE_0\subseteq\bE$ be a separable subspace and assume that $\bE_0\in {\cal B}^{\circ}$. Assume that $(\widetilde\bE,\|\cdot\|_{\widetilde\bE})$ is separable, let $(a_n)$ be a sequence of positive real numbers tending to $\infty$, and consider the following conditions:
\begin{itemize}
    \item[(a)] $a_n(\widehat T_n-\theta)$ takes values only in $\bE$, is $({\cal F},{\cal B}^{\circ})$-measurable, and satisfies
    $$
         a_n(\widehat T_n-\theta)\,\leadsto^\circ\,\xi\qquad\mbox{in $(\bE,{\cal B}^{\circ},\|\cdot\|_{\bE})$}
    $$
    for some $(\bE,{\cal B}^{\circ})$-valued random variable $\xi$ on some probability space $(\check\Omega,\check{\cal F},\check\pr)$ with $\xi(\check\Omega)\subseteq\bE_0$.
    \item[(b)] $a_n(H(\widehat T_n)-H(\theta))$ takes values only in $\widetilde\bE$ and is $({\cal F},\widetilde{\cal B})$-measurable.
    \item[(c)] The map $H$ is quasi-Hadamard differentiable at $\theta$ tangentially to $\bE_0\langle\bE\rangle$ with trace $\widetilde\bE$ and quasi-Hadamard derivative $\dot H_\theta$.
    \item[(d)] The quasi-Hadamard derivative $\dot H_\theta$ can be extended from $\bE_0$ to $\bE$ such that the extension $\dot H_\theta:\bE\rightarrow\widetilde\bE$ is linear and $({\cal B}^{\circ},\widetilde{\cal B})$-measurable. Moreover, the extension $\dot H_\theta:\bE\rightarrow\widetilde\bE$ is continuous at every point of $\bE_0$.
    \item[(e)] $a_n(H(\widehat T_{n}^*)-H(\widehat T_{n}))$ takes values only in $\widetilde\bE$ and is $(\overline{\cal F},\widetilde{\cal B})$-measurable.
    \item[(f)] $a_n(\widehat T_n^*-\theta)$ and $a_n(\widehat T_n^{*}-\widehat T_n)$ take values only in $\bE$ and are $(\overline{\cal F},{\cal B}^\circ)$-measurable, and $$
            a_n(\widehat T_n^{*}(\omega,\cdot)-\widehat T_n(\omega))\,\leadsto^\circ\,\xi\qquad\mbox{in $(\bE,\mathcal{B}^{\circ},\|\cdot\|_{\bE})$},\qquad\mbox{$\pr$-a.e.\ $\omega$}.
        $$
    \item[(f')] $a_n(\widehat T_n^*-\theta)$ and $a_n(\widehat T_n^{*}-\widehat T_n)$ take values only in $\bE$ and are $(\overline{\cal F},{\cal B}^\circ)$-measurable, and
        $$
            \lim_{n\to\infty}\pr^{\scriptsize{\sf out}}\big[\big\{\omega\in\Omega:\,\varrho_{\scriptsize{\rm BL}}^\circ(\pr'_{a_n(\widehat T_n^{*}(\omega,\cdot)-\widehat T_n(\omega))},\check\pr_{\xi})\ge\delta\big\}\big]=\,0\quad\mbox{ for all }\delta>0.
        $$
\end{itemize}
Then the following assertions hold:
\begin{itemize}
    \item[(i)] If conditions (a)--(c) hold, then $\dot H_\theta(\xi)$ is $(\check{\cal F},{\cal B}(\R))$-measurable and
        $$
            a_n(H(\widehat T_n)-H(\theta))\,\leadsto\,\dot H_\theta(\xi)\qquad\mbox{in $(\widetilde\bE,\widetilde{\cal B},\|\cdot\|_{\widetilde\bE)}$}.
        $$
    \item[(ii)] If conditions (a)--(f) hold, then $\dot H_\theta(\xi)$ is $(\check{\cal F},{\cal B}(\R))$-measurable and
        $$
            \lim_{n\to\infty}\pr\big[\big\{\omega\in\Omega:\,\widetilde\varrho_{\scriptsize{\rm BL}}\big(\pr'_{a_n(H(\widehat T_n^{*}(\omega,\cdot))-H(\widehat T_n(\omega)))},\check\pr_{\dot H_\theta(\xi)}\big)\ge\delta\big\}\big]=\,0\quad\mbox{ for all }\delta>0.
        $$
    \item[(iii)]  Assertion (ii) still holds when assumption (f) is replaced by (f').
\end{itemize}
\end{theorem}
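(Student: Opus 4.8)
The plan is to mirror the proof of Theorem 3.1 in \cite{BeutnerZaehle2015}, performing the minor bookkeeping needed to accommodate the trace $\widetilde\bE$: since $H$ takes values in $\widetilde\bE$ and, by quasi-Hadamard differentiability with trace $\widetilde\bE$, every relevant difference $H(x)-H(y)$ lies in $\widetilde\bE$, each convergence-in-distribution statement may be read off in the separable normed space $(\widetilde\bE,\|\cdot\|_{\widetilde\bE})$ rather than in the ambient $\widetilde\V$. The backbone is the reformulation of quasi-Hadamard differentiability as \emph{continuous convergence}: with
$$
    g_n(x):=a_n\big(H(\theta+a_n^{-1}x)-H(\theta)\big)
$$
defined whenever $\theta+a_n^{-1}x\in\V_H$, condition (c) says precisely that $g_n(x_n)\to\dot H_\theta(x_0)$ in $\|\cdot\|_{\widetilde\bE}$ for every $x_n\to x_0$ in $(\bE,\|\cdot\|_\bE)$ with $x_0\in\bE_0$.

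For part (i) I would first record that $\dot H_\theta(\xi)$ is measurable: by (d) the extension $\dot H_\theta:\bE\to\widetilde\bE$ is $({\cal B}^\circ,\widetilde{\cal B})$-measurable and $\xi$ is $(\check{\cal F},{\cal B}^\circ)$-measurable, so the composition is $(\check{\cal F},\widetilde{\cal B})$-measurable. Writing $a_n(H(\widehat T_n)-H(\theta))=g_n\big(a_n(\widehat T_n-\theta)\big)$ and invoking (a), the convergence $a_n(H(\widehat T_n)-H(\theta))\leadsto\dot H_\theta(\xi)$ follows from an extended continuous mapping theorem tailored to the open-ball $\sigma$-algebra: the $g_n$ converge continuously to $\dot H_\theta$ along $\bE_0$, the limit $\xi$ concentrates on the separable, ${\cal B}^\circ$-measurable set $\bE_0$ on which $\dot H_\theta$ is continuous (by (d)), and $\widetilde\bE$ is separable --- exactly the configuration in which this mapping theorem applies.

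For parts (ii) and (iii) I would pass to the conditional picture and exploit the bounded Lipschitz characterization of $\leadsto^\circ$ (legitimate because the limit sits on a separable set). Abbreviating $\zeta_n:=a_n(\widehat T_n-\theta)$ and $\Delta_n:=a_n(\widehat T_n^*-\widehat T_n)$, one has
$$
    a_n\big(H(\widehat T_n^*)-H(\widehat T_n)\big)=g_n(\zeta_n+\Delta_n)-g_n(\zeta_n),
$$
where (e) supplies the required measurability and (f) (resp.\ (f$'$)) the conditional convergence $\Delta_n\leadsto^\circ\xi$ for $\pr$-a.e.\ $\omega$ (resp.\ its in-$\pr$-probability version). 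Conditionally on $\omega$ the base point $\zeta_n(\omega)$ is frozen, and the target $\widetilde\varrho_{\scriptsize{\rm BL}}\big(\pr'_{a_n(H(\widehat T_n^*(\omega,\cdot))-H(\widehat T_n(\omega)))},\check\pr_{\dot H_\theta(\xi)}\big)\to 0$ is obtained from a conditional version of the same extended continuous mapping theorem, with the linearity of $\dot H_\theta$ from (d) the ingredient that lets the drifting centre be absorbed; assertion (iii) then follows from (ii) by the routine subsequence argument converting $\pr$-a.e.\ statements into in-$\pr$-probability ones.

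The step I expect to be the main obstacle is precisely this last one: the centre $\widehat T_n$ of the bootstrap distribution is random, and for fixed $\omega$ the sequence $\zeta_n(\omega)$ need neither converge nor remain bounded, so one cannot naively invoke continuous convergence at a fixed tangent point. Reproducing faithfully the device from \cite{BeutnerZaehle2015} that uses linearity of the derivative to render the increment $g_n(\zeta_n+x_n)-g_n(\zeta_n)$ asymptotically insensitive to the drifting base point --- and then inserts it into the conditional mapping theorem --- is the heart of the matter; by contrast, checking that every separability and measurability hypothesis survives the passage from $\widetilde\V$ to the trace $\widetilde\bE$ is the only genuinely new, and entirely routine, bookkeeping relative to \cite{BeutnerZaehle2015}.
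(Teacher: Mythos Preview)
Your approach is essentially identical to the paper's: the paper does not give a proof but simply states that the result is a slight generalization of Theorem~3.1 in \cite{BeutnerZaehle2015} and that ``one can use the same proof with the obvious (minor) modifications'' --- precisely what you propose, with the trace $\widetilde\bE$ accounting for the only bookkeeping change.

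One small slip: in your treatment of part~(i) you invoke condition~(d) to obtain measurability of $\dot H_\theta(\xi)$, but part~(i) assumes only (a)--(c). This is easily repaired: since $\bE_0$ is separable and $\bE_0\in{\cal B}^\circ$, the trace of ${\cal B}^\circ$ on $\bE_0$ is the Borel $\sigma$-algebra of $\bE_0$; continuity of $\dot H_\theta$ on $\bE_0$ (from (c)) together with separability of $\widetilde\bE$ then yields the required measurability directly, without appealing to (d).
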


For (f) and (f') in the preceding theorem note that the mapping $\omega'\mapsto a_n(\widehat T_n^*(\omega,\omega')-\widehat T_n(\omega))$ is $({\cal F}',{\cal B}^\circ)$-measurable for every fixed $\omega\in\Omega$, because $a_n(\widehat T_n^*-\widehat T_n)$ is $(\overline{\cal F},{\cal B}^\circ)$-measurable with $\overline{\cal F}={\cal F}\otimes{\cal F}'$. That is, $a_n(\widehat T_n^*(\omega,\cdot)-\widehat T_n(\omega))$ can be seen as an $(\bE,{\cal B}^\circ)$-valued random variable on $(\Omega',{\cal F}',\pr')$ for every fixed $\omega\in\Omega$. Analogously, we can regard $a_n(H(\widehat T_n^{*}(\omega,\cdot))-H(\widehat T_n(\omega)))$ as a real-valued random variable on $(\Omega',{\cal F}',\pr')$ for every fixed $\omega\in\Omega$. This matters for the formulation of part (ii) in the preceding theorem.

%%%%%%%%%%%%%%%%%%%%%%%%%%%%%%%%%%%%%%%%%%%%%%%%%%%%%%%%%%%%%%%%
%%%%%%%%%%%%%%%%%%%%%%%%%%%%%%%%%%%%%%%%%%%%%%%%%%%%%%%%%%%%%%%%
%%%%%%%%%%%%%%%%%%%%%%%%%%%%%%%%%%%%%%%%%%%%%%%%%%%%%%%%%%%%%%%%

\section{Convergence in distribution of the empirical process regarded as an $\bL_1$-valued random variable}\label{some comments on the space L 1}

By definition $\bL_1$ is the set of all Borel measurable functions $v:\R\rightarrow\R$ with $\|v\|_{1,\ell}<\infty$ modulo the equivalence relation of almost sure identity, where $\|\cdot\|_{1,\ell}$ is defined in (\ref{Def Wasserstein norm}). It is known that $(\bL_1,\|\cdot\|_{1,\ell})$ is a separable Banach space; cf.\ Theorem 4.1.3 and Corollary 4.2.2 in \cite{Bogachev2007}. Denote by ${\cal B}_1$ the Borel $\sigma$-algebra on $\bL_1$ w.r.t.\ the norm $\|\cdot\|_{1,\ell}$. Let $\xi$ be a real-valued stochastic process on a probability space $(\Omega,{\cal F},\pr)$ with index set $\R$. That is, $\xi:\Omega\times\R\rightarrow\R$ is any map such that the coordinate $\omega\mapsto\xi(t,\omega)$ is $({\cal F},{\cal B}(\R))$-measurable for every $t\in\R$. The process $\xi$ is said to be measurable if $\xi:\Omega\times\R\rightarrow\R$ is $({\cal F}\otimes{\cal B}(\R),{\cal B}(\R))$-measurable.

\begin{lemma}\label{L 1 valued random variables}
If the stochastic process $\xi$ is measurable and $\xi(\omega,\cdot)\in\bL_1$ for all $\omega\in\Omega$, then $\omega\mapsto\xi(\omega,\cdot)$ is an $({\cal F},{\cal B}_1)$-measurable mapping from $\Omega$ to $\bL_1$. In particular, $\xi$ can be seen as an $(\bL_1,{\cal B}_1)$-valued random variable.
\end{lemma}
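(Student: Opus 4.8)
The plan is to exploit the separability of $(\bL_1,\|\cdot\|_{1,\ell})$, which is recorded in the preamble to this section, together with the elementary principle that a map into a separable metric space is Borel measurable as soon as the preimages of all open balls lie in the domain $\sigma$-algebra. Write $\Phi:\Omega\to\bL_1$ for the map $\omega\mapsto\xi(\omega,\cdot)$ (well defined as a map into equivalence classes precisely because $\xi(\omega,\cdot)\in\bL_1$ for every $\omega$). I would first reduce the assertion to showing that, for each fixed $v\in\bL_1$, the real-valued map $\omega\mapsto\|\Phi(\omega)-v\|_{1,\ell}$ is $({\cal F},{\cal B}(\R))$-measurable. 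Indeed, fix a countable $\|\cdot\|_{1,\ell}$-dense subset $D\subseteq\bL_1$. Every $\|\cdot\|_{1,\ell}$-open set is a countable union of open balls $B(v,r):=\{w\in\bL_1:\|w-v\|_{1,\ell}<r\}$ with centres $v\in D$ and radii $r\in\Q\cap(0,\infty)$, so these balls generate ${\cal B}_1$. Since $\Phi^{-1}(B(v,r))=\{\omega:\|\Phi(\omega)-v\|_{1,\ell}<r\}$ and the class of sets with ${\cal F}$-measurable preimage is a $\sigma$-algebra, measurability of $\Phi$ follows once every distance map is measurable.

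To establish the latter, fix $v\in\bL_1$ and consider $g:\Omega\times\R\to[0,\infty)$, $g(\omega,x):=|\xi(\omega,x)-v(x)|$. By hypothesis the process $\xi$ is $({\cal F}\otimes{\cal B}(\R),{\cal B}(\R))$-measurable, and $(\omega,x)\mapsto v(x)$ is $({\cal F}\otimes{\cal B}(\R),{\cal B}(\R))$-measurable because $v$ is Borel measurable and the map factors through the coordinate projection onto $\R$; hence $g$ is jointly measurable and nonnegative. As $\ell$ is $\sigma$-finite, Tonelli's theorem guarantees that $\omega\mapsto\int_\R g(\omega,x)\,\ell(dx)=\int_\R|\xi(\omega,x)-v(x)|\,\ell(dx)=\|\Phi(\omega)-v\|_{1,\ell}$ is $({\cal F},{\cal B}([0,\infty]))$-measurable; the integral is in fact finite for every $\omega$ since $\xi(\omega,\cdot),v\in\bL_1$. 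Combined with the reduction above, this yields $({\cal F},{\cal B}_1)$-measurability of $\Phi$.

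The argument is essentially routine, so I do not expect a genuine obstacle; the one place that deserves care is the passage from measurability of the distance maps to Borel measurability of $\Phi$, which is exactly where separability of $\bL_1$ enters (for a nonseparable target the open balls need not generate the Borel $\sigma$-algebra, and one would instead have to argue through Pettis' measurability theorem). The ``in particular'' clause is then immediate: once $\Phi$ is shown to be $({\cal F},{\cal B}_1)$-measurable, it may by definition be regarded as an $(\bL_1,{\cal B}_1)$-valued random variable.
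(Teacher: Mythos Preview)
Your proof is correct and follows essentially the same route as the paper: both exploit separability of $(\bL_1,\|\cdot\|_{1,\ell})$ so that the open balls generate ${\cal B}_1$, then use joint measurability of $(\omega,x)\mapsto|\xi(\omega,x)-v(x)|$ together with Tonelli/Fubini to conclude that each distance map $\omega\mapsto\|\xi(\omega,\cdot)-v\|_{1,\ell}$ is ${\cal F}$-measurable. The only cosmetic difference is that the paper checks preimages of \emph{all} balls directly, whereas you first pass to balls with centres in a countable dense set; either way the argument is the same.
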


\begin{proof}
The family of all open balls generate ${\cal B}_1$, because $(\bL_1,{\cal B}_1)$ is separable. Thus it suffices to show that $\xi^{-1}(B_r(v))\in{\cal F}$ for every $r>0$ and $v\in\bL_1$, where $B_r(v)$ denotes the $\|\cdot\|_{1,\ell}$-open ball with radius $r>0$ around $v\in\bL_1$. Let $r>0$ and $v\in\bL_1$. By assumption the map $\xi:\bL_1\times\R\rightarrow\R$ is $({\cal F}\otimes{\cal B}(\R),{\cal B}(\R))$-measurable. Since the mapping $t\mapsto v(t)$ is Borel measurable for every $v\in\bL_1$, it follows that also the map $\xi_v:\bL_1\times\R\rightarrow\R$ defined by $\xi_v(\omega,t):=v(t)$ is $({\cal F}\otimes{\cal B}(\R),{\cal B}(\R))$-measurable. By Fubinis' theorem we obtain in particular that the map ${\cal I}_v:\Omega\rightarrow\R$ defined by ${\cal I}_v(\omega):=\int|\xi_v(\omega,t)-\xi(\omega,t)|\,\ell(dt)$ is $({\cal F},{\cal B}(\R))$-measurable. Along with
$$
    \xi^{-1}(B_r(v))=\Big\{\omega\in\Omega:\,\int|\xi_v(\omega,t)-\xi(\omega,t)|\,\ell(dt)<r\Big\}={\cal I}_v^{-1}([0,r)),
$$
this implies $\xi^{-1}(B_r(v))\in{\cal F}$.
\end{proof}

\begin{remarknorm}
It is well known that every real-valued stochastic process $\xi$ with right-continuous paths is measurable. In particular, the process $\sqrt{n}(F_{\widehat\theta_n}-F)$ is measurable when $F$ is a distribution function and $F_{\widehat\theta_n}$ is a process with right-continuous paths. It follows that $\sqrt{n}(F_{\widehat\theta_n}-F)$ can be seen as an $\bL_1$-valued random variable when $F\in\F_1$ and $F_{\widehat\theta_n}$ takes values only in $\F_1$.
{\hspace*{\fill}$\Diamond$\par\bigskip}
\end{remarknorm}

The following Theorem \ref{CLT by Dede} recalls the statements of Propositions 3.2 and 3.5 in \cite{Dede2009}. Here $\bL_\infty$ refers to the space of all bounded Borel measurable functions from $\R$ to $\R$ modulo the equivalence relation of $\ell$-almost sure identity.

\begin{theorem}\label{CLT by Dede}
With the notation and under the assumptions of Theorem \ref{main theorem coroll} (except the continuity of $F$ at ${\cal R}_\alpha(F)$),
$$
    \sqrt{n}(\widehat F_n-F)\,\leadsto\,B_F\qquad\mbox{in $(\bL_1,{\cal B}_1,\|\cdot\|_{1,\ell})$}
$$
for an $\bL_1$-valued centered Gaussian random variable $B_F$ with covariance operator
\begin{equation}\label{CLT by Dede - eq}
    \Phi_{B_F}(f,g)=\int_{\R^2}f(s)C_F(s,t)g(t)\,d(s,t)\qquad\mbox{for all }~f,g\in\bL_\infty,
\end{equation}
where $C_F(s,t)$ is defined by (\ref{main theorem coroll - eq - 60}).
\end{theorem}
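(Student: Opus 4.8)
The plan is to derive the statement directly from Dede's Propositions 3.2 and 3.5 in \cite{Dede2009}, which establish exactly this kind of empirical central limit theorem in $\bL_1$ for a \emph{two-sided} strictly stationary and ergodic sequence indexed by $\Z$, under a moment hypothesis together with a summability condition on the dependence coefficients $\overline\phi$ or $\overline\alpha$. Since our observations $(X_i)\equiv(X_i)_{i\in\N}$ are indexed only by $\N$, the first step is the reduction carried out in Appendix \ref{dependence coefficients}: extend $(X_i)_{i\in\N}$ to a two-sided strictly stationary sequence $(Y_i)_{i\in\Z}$ with the same one-dimensional marginal $F$, preserving ergodicity and satisfying $\widetilde\phi(n)=\overline\phi(n)$ and $\widetilde\alpha(n)=\overline\alpha(n)$. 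Taking $Y_i=X_i$ for $i\ge1$, the empirical distribution function of $X_1,\ldots,X_n$ coincides with that of $Y_1,\ldots,Y_n$, so the limit law of $\sqrt{n}(\widehat F_n-F)$ is unaffected by the extension.

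With the two-sided sequence in hand, the second step is to check that the hypotheses of Theorem \ref{main theorem coroll} translate exactly into Dede's assumptions. The integrability condition $\int\sqrt{F(1-F)}\,d\ell<\infty$ is precisely Dede's moment condition guaranteeing that the candidate Gaussian limit is an $\bL_1$-valued random variable (it forces $\ex\|B_F\|_{1,\ell}<\infty$, since $\ex|B_F(t)|$ is proportional to $\sqrt{C_F(t,t)}$ and the diagonal kernel behaves like $F(t)(1-F(t))$), while either (\ref{main theorem coroll - eq - 10}) or (\ref{main theorem coroll - eq - 20}) is exactly one of Dede's two alternative dependence conditions once $\overline\phi$ and $\overline\alpha$ are replaced by $\widetilde\phi$ and $\widetilde\alpha$. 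Dede's propositions then yield $\sqrt{n}(\widehat F_n-F)\leadsto B_F$ in $(\bL_1,{\cal B}_1,\|\cdot\|_{1,\ell})$ for a centered Gaussian $B_F$.

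It remains to identify the covariance operator. Writing $Y_i(t):=\eins_{\{X_i\le t\}}-F(t)$, the rescaled empirical process is the normalized partial sum $n^{-1/2}\sum_{i=1}^n Y_i(\cdot)$ of centered, stationary $\bL_1$-valued summands, so for $f,g\in\bL_\infty$ the limiting covariance of the linear functionals $\int f\,d(\sqrt{n}(\widehat F_n-F))$ and $\int g\,d(\sqrt{n}(\widehat F_n-F))$ is the stationary covariance series
\[
    \int_{\R^2}f(s)\,K_F(s,t)\,g(t)\,(\ell\otimes\ell)(d(s,t)),\qquad
    K_F(s,t):=\covi\big(Y_1(s),Y_1(t)\big)+\sum_{k\ge2}\Big(\covi\big(Y_1(s),Y_k(t)\big)+\covi\big(Y_1(t),Y_k(s)\big)\Big).
\]
Since $\covi(Y_1(s),Y_1(t))=F(s\wedge t)-F(s)F(t)=F(s\wedge t)(1-F(s\vee t))$ and $\covi(Y_1(s),Y_k(t))=\covi(\eins_{\{X_1\le s\}},\eins_{\{X_k\le t\}})$, comparison with (\ref{main theorem coroll - eq - 60}) shows $K_F=C_F$, which is (\ref{CLT by Dede - eq}).

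The genuine analytic difficulty is the one already resolved by Dede, namely tightness (asymptotic equicontinuity) of the empirical process in the non-Hilbertian space $\bL_1$: unlike in $\bL_2$, finite second moments of the summands do not by themselves yield a central limit theorem, and one must control the $\bL_1$-modulus of the partial sums uniformly, using the dependence coefficients to dominate the covariance tails and the integrability condition $\int\sqrt{F(1-F)}\,d\ell<\infty$ to control the behaviour near $\pm\infty$. In the reduction strategy above the finite-dimensional convergence is the comparatively routine ingredient (a multivariate central limit theorem for a weakly dependent stationary sequence, with absolute convergence of the covariance series guaranteed by (\ref{main theorem coroll - eq - 10}) or (\ref{main theorem coroll - eq - 20})), so the real effort concentrates on the careful bookkeeping that aligns the one-sided and two-sided dependence coefficients and on verifying the covariance identity above.
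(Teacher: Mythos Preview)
Your proposal is correct and matches the paper's approach: the paper does not give a proof of this theorem at all but simply states that it ``recalls the statements of Propositions 3.2 and 3.5 in \cite{Dede2009}'', with the one-sided/two-sided reduction handled separately in Appendix~\ref{dependence coefficients}. You have made explicit exactly the two ingredients the paper leaves implicit---the passage to a two-sided extension via Lemmas~\ref{ergodic preservation} and~\ref{Alternativrepraesentation}, and the identification of the covariance kernel $K_F$ with $C_F$---so your write-up is in fact more detailed than the paper's own treatment.
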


Recall from \cite{AraujoGine1980} that an $(\bL_1,{\cal B}_1)$-valued random variable $B$ on some probability space $(\Omega,{\cal F},\pr)$ is said to be an $\bL_1$-valued Gaussian random variable if $\Lambda(B)$ is a real-valued Gaussian random variable for each $\|\cdot\|_{1,\ell}\,$-continuous linear functional $\Lambda:\bL_1\rightarrow\R$, i.e, if $\int f(t)B(t)\,\ell(dt)$ is a real-valued Gaussian random variable for every $f\in\bL_\infty$. The covariance operator of such an $\bL_1$-valued Gaussian random variable $B$ is the mapping $\Phi_{B}:\bL_\infty\times\bL_\infty\rightarrow\R$ defined by
$$
    \Phi_{B}(f,g):=\ex\Big[\Big(\int f(s)(B(s)-\ex[B(s)])\,\ell(ds)\Big)\Big( \int g(t)(B(t)-\ex[B(t)])\,\ell(dt)\Big)\Big].
$$

\begin{theorem}\label{bootstrap results of VdV-W}
With the notation and under the assumptions of Theorem \ref{main theorem coroll - bootstrap} (except the continuity of $F$ at ${\cal R}_\alpha(F)$),
\begin{equation}\label{bootstrap results of VdV-W - 20 - new}
    \sqrt{n}(\widehat F_n^{*}(\omega,\cdot)-\widehat F_n(\omega))\,\leadsto\,B_F\qquad\mbox{in $(\bL_1,\mathcal{B}_1,\|\cdot\|_{1,\ell})$},\qquad\mbox{$\pr$-a.e.\ $\omega$},
\end{equation}
where $B_F$ is as in Theorem \ref{CLT by Dede} (with $C_F(t_0,t_1)=F(t_0\wedge t_1)(1-F(t_0\vee t_1))$).
\end{theorem}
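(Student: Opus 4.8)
The plan is to derive the conditional weak convergence in $(\bL_1,{\cal B}_1,\|\cdot\|_{1,\ell})$ by separating it, as is standard for a separable Banach space, into convergence of the continuous‑linear‑functional marginals and asymptotic tightness. First I would put the bootstrap process into multiplier form. Both weighting schemes satisfy $\sum_{i=1}^nW_{ni}=n$ (for Efron's bootstrap by construction of the multinomial count, for the Bayesian bootstrap because $\sum_iY_i/\overline Y_n=n$), so the deterministic centering by $F$ may be inserted at no cost and
\begin{equation*}
    \sqrt{n}\big(\widehat F_n^*(\omega,\cdot)-\widehat F_n(\omega)\big)\,=\,\frac{1}{\sqrt{n}}\sum_{i=1}^n(W_{ni}-1)\,Z_i,\qquad Z_i:=\eins_{[X_i,\infty)}-F.
\end{equation*}
Since $F\in\F_1$ we have $Z_i\in\bL_1$, so the right‑hand side is a bona fide $\bL_1$‑valued random element; conditionally on the sample $\omega$ the $Z_i$ are fixed and all randomness resides in the weights.

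For the marginals, fix $f\in\bL_\infty$ and observe that the continuous linear functional $\int f\,\cdot\,d\ell$ sends the process to $\frac{1}{\sqrt n}\sum_{i=1}^n(W_{ni}-1)\,\langle f,Z_i\rangle$ with scalar scores $\langle f,Z_i\rangle:=\int f\,Z_i\,d\ell$, which are fixed reals once $\omega$ is frozen. The strong law yields $\frac1n\sum_{i=1}^n\langle f,Z_i\rangle^2\to\vari[\langle f,Z_1\rangle]=\Phi_{B_F}(f,f)$ $\pr$‑a.s., the last identity because $\covi(Z_1(s),Z_1(t))=F(s\wedge t)(1-F(s\vee t))=C_F(s,t)$ and $\ex[\langle f,Z_1\rangle]=0$; finiteness of this variance is guaranteed by $\int\sqrt{F(1-F)}\,d\ell<\infty$, which holds by Remark \ref{example for cond of main theorem coroll}(i). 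A conditional Lindeberg central limit theorem for the weighted sum then gives $\int f\sqrt{n}(\widehat F_n^*-\widehat F_n)\,d\ell\leadsto{\rm N}_{0,\Phi_{B_F}(f,f)}$ for $\pr$‑a.e.\ $\omega$. This is where the two schemes are checked against the exchangeable weighted‑bootstrap conditions of Praestgaard and Wellner (as presented in van der Vaart and Wellner \cite{van der Vaart Wellner 1996}): the multinomial weights are immediate, while for the Bayesian weights the Lorentz‑type moment condition $\int_0^\infty\mu[(x,\infty)]^{1/2}\,dx<\infty$ supplies the requisite integrability and the normalization $\mathrm{std}(Y)=\ex[Y]$ forces the limiting weight variance to equal $1$, so that the limiting covariance is exactly $\Phi_{B_F}$ rather than a rescaling of it. The Cram\'er--Wold device upgrades this to joint convergence of finitely many functionals, identifying all marginals with those of $B_F$.

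The hard part will be the conditional asymptotic tightness of $\{\sqrt{n}(\widehat F_n^*-\widehat F_n)\}_n$ in $\bL_1$, the genuinely infinite‑dimensional ingredient and the place where the strengthened moment hypothesis is indispensable. Here I would control the $L^1(\ell)$‑tails uniformly. The conditional mean of the process is zero, so $\ex'|\sqrt{n}(\widehat F_n^*-\widehat F_n)(t)|\le(\vari'[\,\cdot\,])^{1/2}$, and a direct computation (using the centered form $a_i-\widehat F_n(t)$ with $a_i=\eins_{\{X_i\le t\}}$) gives $\vari'[\sqrt{n}(\widehat F_n^*-\widehat F_n)(t)]=c_n^2\,\widehat F_n(t)(1-\widehat F_n(t))$ with $c_n=1$ for Efron's bootstrap and $c_n\to1$ $\pr$‑a.s.\ for the Bayesian bootstrap. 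Consequently
\begin{equation*}
    \ex'\!\int_{\{|t|>M\}}\big|\sqrt{n}(\widehat F_n^*-\widehat F_n)(t)\big|\,\ell(dt)\,\le\,c_n\!\int_{\{|t|>M\}}\!\sqrt{\widehat F_n(t)(1-\widehat F_n(t))}\,\ell(dt).
\end{equation*}
Using $\int\phi^2\,dF<\infty$, Markov's inequality, and the strong law applied to $\frac1n\sum_{i=1}^n\phi(X_i)^2$, one dominates $1-\widehat F_n(t)$ (and symmetrically $\widehat F_n(t)$ on the left tail) by $C/\phi(t)^2$ for $\pr$‑a.e.\ $\omega$ and all large $n$, whence the integrand is bounded by the $\ell$‑integrable envelope $\sqrt{C}/\phi$; letting $M\to\infty$ renders the tails uniformly negligible. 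On each compact interval $[-M,M]$ the restricted process is asymptotically tight in $\ell^\infty([-M,M])$ by the (bootstrap) Donsker property of the half‑line indicator class, and sup‑norm control there entails $L^1([-M,M])$‑control.

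Combining the uniform tail negligibility with the compact‑interval tightness yields asymptotic tightness in $\bL_1$, and together with the marginal convergence — valid for the separating family of functionals $\int f\,\cdot\,d\ell$, $f\in\bL_\infty$ — this gives $\sqrt{n}(\widehat F_n^*-\widehat F_n)\leadsto B_F$ in $(\bL_1,{\cal B}_1,\|\cdot\|_{1,\ell})$ for $\pr$‑a.e.\ $\omega$, which is the assertion; the stated covariance $C_F(t_0,t_1)=F(t_0\wedge t_1)(1-F(t_0\vee t_1))$ is exactly the one from (\ref{main theorem coroll - eq - 60}) after the i.i.d.\ assumption annihilates the covariance sum. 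I expect the principal obstacle to be precisely this uniform tail domination in the $\|\cdot\|_{1,\ell}$‑norm that promotes the finite‑dimensional and compact‑interval statements to genuine convergence in $\bL_1$, since it is the step the $\ell^\infty$‑valued bootstrap theory does not directly deliver and the one that forces the moment condition $\int\phi^2\,dF<\infty$ with $\int1/\phi\,d\ell<\infty$.
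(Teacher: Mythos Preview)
Your approach is correct in outline but takes a genuinely different route from the paper's. The paper does not work directly in $\bL_1$ at all: it invokes Theorem~5.2 of \cite{BeutnerZaehle2015}, which already delivers the almost-sure conditional weak convergence $\sqrt{n}(\widehat F_n^*(\omega,\cdot)-\widehat F_n(\omega))\leadsto^\circ B_F$ in the weighted sup-norm space $(\D_\phi,\|\cdot\|_\phi)$ under precisely the assumptions $\int\phi^2\,dF<\infty$, $\int 1/\phi\,d\ell<\infty$, and the two weighting schemes (a), (b). Since $\|v\|_{1,\ell}\le C_\phi\|v\|_\phi$, the natural embedding $\D_\phi\hookrightarrow\bL_1$ is continuous, and the continuous mapping theorem transports the convergence to $(\bL_1,{\cal B}_1,\|\cdot\|_{1,\ell})$ in one line. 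So the paper's proof is essentially two sentences.

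Your direct argument---multiplier representation, Lindeberg-type conditional CLT for the $\bL_\infty$-functional marginals, and tightness via tail domination plus compact-interval Donsker---is a legitimate strategy and your tail bound is the right mechanism (it is, in effect, how one proves a $\D_\phi$-type result from scratch). What the paper's shortcut buys is economy and the ability to piggyback on the exchangeable-weighted-bootstrap machinery already packaged in \cite{BeutnerZaehle2015}; what your approach buys is self-containment and a clearer view of exactly where the moment condition $\int\phi^2\,dF<\infty$ enters (uniform domination of $\widehat F_n(1-\widehat F_n)$ by an $\ell$-integrable envelope). Note, however, that the cited Theorem~5.2 is itself built on the Praestgaard--Wellner/van der Vaart--Wellner results you invoke, so the underlying ingredients coincide; the difference is that the paper applies them in $\D_\phi$ and then embeds, whereas you redo the tightness step directly in $\bL_1$.
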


\begin{proof}
Theorem 5.2 in \cite{BeutnerZaehle2015} shows that the imposed assumptions imply that (\ref{bootstrap results of VdV-W - 20 - new}) with $\leadsto$ and $(\bL_1,\mathcal{B}_1,\|\cdot\|_{1,\ell})$ replaced by $\leadsto^\circ$ and $(\D_\phi,{\cal D}_\phi,\|\cdot\|_\phi)$ holds. Here $\D_\phi$ is the space of all \cadlag\ functions $v:\R\rightarrow\R$ with $\|v\|_\phi:=\sup_{x\in\R}|v(x)|\phi(x)<\infty$ and ${\cal D}_\phi$ is the open-ball $\sigma$-algebra on $(\D_\phi,\|\cdot\|_\phi)$. Since $\D_\phi\subseteq\bL_1$ and $\|\cdot\|_{1,\ell}\le C_\phi\|\cdot\|$ with $C_\phi:=\int 1/\phi\,d\ell<\infty$, the natural embedding $\D_\phi\rightarrow\bL_1$, $v\mapsto v$, is $(\|\cdot\|_\phi,\|\cdot\|_{1,\ell})$-continuous. Thus the continuous mapping theorem in the form of \cite[Theorem 6.4]{Billingsley1999} ensures that (\ref{bootstrap results of VdV-W - 20 - new}) itself holds too.
\end{proof}

\begin{theorem}\label{bootstrap results of Radulovic EP}
With the notation and under the assumptions of Theorem \ref{bootstrap results of Radulovic} (except the continuity of $F$ at ${\cal R}_\alpha(F)$),
\begin{equation}\label{bootstrap results of Radulovic EP - 10}
    \sqrt{n}(\widehat F_n^{*}(\omega,\cdot)-\widehat F_n(\omega))\,\leadsto\,B_F\qquad\mbox{in $(\bL_1,\mathcal{B}_1,\|\cdot\|_{1,\ell})$},\qquad\mbox{$\pr$-a.e.\ $\omega$},
\end{equation}
where $B_F$ is as in Theorem \ref{CLT by Dede}.
\end{theorem}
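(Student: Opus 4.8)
The plan is to follow the proof of Theorem~\ref{bootstrap results of VdV-W} step by step, the only change being that Radulovic's block-bootstrap central limit theorem replaces the i.i.d.\ bootstrap CLT. Concretely, I would first establish the circular-bootstrap CLT for the empirical process in a suitable weighted \cadlag\ space $(\D_\phi,\|\cdot\|_\phi)$, where $\D_\phi$ denotes the space of all bounded \cadlag\ functions $v:\R\to\R$ with $\|v\|_\phi:=\sup_{x\in\R}|v(x)|\phi(x)<\infty$ and ${\cal D}_\phi$ the corresponding open-ball $\sigma$-algebra, and then transfer the convergence to $(\bL_1,\|\cdot\|_{1,\ell})$ through a continuous embedding. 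As the weight I would take $\phi(x):=(1+|x|)^q$ with $q\in(1,p/2]$; such a $q$ exists precisely because $p>2$, and this choice secures both $C_\phi:=\int 1/\phi\,d\ell<\infty$ (whence $\D_\phi\subseteq\bL_1$ and $\|\cdot\|_{1,\ell}\le C_\phi\|\cdot\|_\phi$) and $\int\phi^2\,dF<\infty$, the latter being the integrability level at which the weighted empirical process admits a Gaussian limit.

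The core step is to invoke the bootstrap central limit theorem of Radulovic \cite{Radulovic1996} for the moving-blocks (equivalently, up to a slight modification, circular) bootstrap of the weighted empirical process of a $\beta$-mixing sequence. Here I would verify that the hypotheses of Theorem~\ref{bootstrap results of Radulovic}---the decay $\beta(i)={\cal O}(i^{-b})$ with $b>p/(p-2)$, the moment bound $\int|x|^p\,dF<\infty$, and the block length $\ell_n={\cal O}(n^\gamma)$ with $\gamma\in(0,(p-2)/(2(p-1)))$---entail the rate conditions of Radulovic's result for the weight $\phi$ fixed above, and that the resampling weights (\ref{example for tau - beta mixing - 2}) fall within its scope. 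This yields
$$
    \sqrt{n}(\widehat F_n^{*}(\omega,\cdot)-\widehat F_n(\omega))\,\leadsto^\circ\,B_F\qquad\mbox{in $(\D_\phi,{\cal D}_\phi,\|\cdot\|_\phi)$},\qquad\mbox{$\pr$-a.e.\ $\omega$}.
$$
The crucial point is that the block bootstrap reproduces the full long-run covariance $C_F$ of (\ref{main theorem coroll - eq - 60})---including the cross-covariance series, not merely the marginal term---so that the limit $B_F$ coincides with the one in Theorem~\ref{CLT by Dede}.

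Since the natural embedding $\D_\phi\to\bL_1$, $v\mapsto v$, is $(\|\cdot\|_\phi,\|\cdot\|_{1,\ell})$-continuous by $\|\cdot\|_{1,\ell}\le C_\phi\|\cdot\|_\phi$, the continuous mapping theorem in the form of \cite[Theorem~6.4]{Billingsley1999} carries the displayed convergence over to (\ref{bootstrap results of Radulovic EP - 10}), which is the assertion.

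I expect the main obstacle to lie entirely in the core step: matching the joint condition on mixing rate, moment order and block-length exponent in Theorem~\ref{bootstrap results of Radulovic} against the precise requirements of Radulovic's theorem for the chosen weight, and confirming that the bootstrap covariance operator is exactly $\Phi_{B_F}$ from (\ref{CLT by Dede - eq}). In particular one must check that the prescribed $\beta$-mixing rate forces absolute summability of the covariances in (\ref{main theorem coroll - eq - 60}), so that $B_F$ is well defined, and that the $\gamma$-range is the one under which the block-bootstrap bias is asymptotically negligible.
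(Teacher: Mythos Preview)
Your proposal is correct and follows essentially the same route as the paper: obtain the bootstrap CLT in a weighted \cadlag\ space $(\D_\phi,\|\cdot\|_\phi)$ and then push it down to $(\bL_1,\|\cdot\|_{1,\ell})$ via the continuous embedding and the continuous mapping theorem. The paper's proof is a one-liner because it cites Theorem~5.4 of \cite{BeutnerZaehle2015} in place of Theorem~5.2, where the verification of Radulovic's conditions for the weighted empirical process (the ``core step'' you single out as the main obstacle) has already been carried out; your choice $\phi(x)=(1+|x|)^q$ with $q\in(1,p/2]$ is exactly the kind of weight that result is tailored to.
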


\begin{proof}
One can argue as in the proof of Theorem \ref{bootstrap results of VdV-W} (with Theorem 5.4 of \cite{BeutnerZaehle2015} in place of Theorem 5.2 in  \cite{BeutnerZaehle2015}).
\end{proof}

%%%%%%%%%%%%%%%%%%%%%%%%%%%%%%%%%%%%%%%%%%%%%%%%%%%%%%%%%%%%%%%%
%%%%%%%%%%%%%%%%%%%%%%%%%%%%%%%%%%%%%%%%%%%%%%%%%%%%%%%%%%%%%%%%
%%%%%%%%%%%%%%%%%%%%%%%%%%%%%%%%%%%%%%%%%%%%%%%%%%%%%%%%%%%%%%%%

\section{A note on the dependence coefficients $\widetilde\phi$ and $\widetilde\alpha$}\label{dependence coefficients}

Let $(X_i)_{i\in\N}$ be a strictly stationary and ergodic sequence of real-valued random variables on some probability space $(\Omega,{\cal F},\pr)$. At the beginning of Section \ref{section asymptotic distribution} we claimed that Dedecker and Prieur \cite{DedeckerPrieur2005} introduced the following dependence coefficients:
\begin{eqnarray*}
    \widetilde\phi(n) & := & \sup_{k\in\N}\,\sup_{x\in\R}\,\|\,\pr[X_{n+k}\in(-\infty,x]|{\cal F}_1^k](\cdot)-\pr[X_{n+k}\in(-\infty,x]]\,\|_\infty,\\
    \widetilde\alpha(n) & := & \sup_{k\in\N}\,\sup_{x\in\R}\,\|\,\pr[X_{n+k}\in (-\infty,x]|{\cal F}_1^k](\cdot)-\pr[X_{n+k}\in(-\infty,x]]\,\|_1,
\end{eqnarray*}
where ${\cal F}_1^k:=\sigma(X_1,\ldots,X_k)$ and $\|\cdot\|_p$ denotes the usual $L^p$-norm on $L^p=L^p(\Omega,{\cal F},\pr)$, $p\in[1,\infty]$. Note, however, that in \cite{DedeckerPrieur2005} the starting point is a strictly stationary and ergodic sequence $(Y_{i})_{i\in\Z}$, and the above dependence coefficients are actually defined by
\begin{eqnarray*}
    \overline{\phi}(n) & := & \sup_{x\in\R}\,\|\,\pr[Y_{n}\in(-\infty,x]|{\cal F}^{0}](\cdot)-\pr[Y_{n}\in(-\infty,x]]\,\|_\infty,\\
    \overline{\alpha}(n) & := & \sup_{x\in\R}\,\|\,\pr[Y_{n}\in(-\infty,x]|{\cal F}^{0}](\cdot)-\pr[Y_{n}\in(-\infty,x]]\,\|_1,
\end{eqnarray*}
where ${\cal F}^{0} := \sigma(\{Y_{i}: i\leq 0\})$. In the following we will discuss that the strictly stationary and ergodic sequence $(X_i)_{i\in\N}$ can be extended to a strictly stationary sequence $(Y_{i})_{i\in\Z}$ being again ergodic and satisfying $\widetilde{\phi}(n) = \overline{\phi}(n)$ and $\widetilde{\alpha}(n) = \overline{\alpha}(n)$. More precisely, we may define a strictly stationary and ergodic sequence $(Y_{i})_{i\in\N_{0}}$ by $Y_{i} := X_{i+1}$, and Lemma 9.2 of \cite{Kallenberg1997} shows that this sequence can be extended to a strictly stationary sequence $(Y_{i})_{i\in\Z}$. Lemmas \ref{ergodic preservation} and \ref{Alternativrepraesentation} ahead show that $(Y_{i})_{i\in\Z}$ is again ergodic and that $\widetilde{\phi}(n) = \overline{\phi}(n)$ and $\widetilde{\alpha}(n) = \overline{\alpha}(n)$, $n\in\N$.

For any random variable $X$ on $\OFP$ and any sub-$\sigma$-algebra ${\cal A}\subseteq\cF$, the following dependence coefficients have been introduced in \cite{DedeckerPrieur2005}:
\begin{eqnarray}
    \phi({\cal A},X) & := &\sup_{x\in\R}\,\|\,\pr[X\in(-\infty,x]|{\cal A}](\cdot)-\pr[X\in(-\infty,x]]\,\|_\infty,
    \label{phi dc}\\
    \alpha({\cal A},X) & := & \sup_{x\in\R}\,\|\,\pr[X(-\infty,x]|{\cal A}](\cdot)-\pr[X\in(-\infty,x]]\,\|_1.
    \label{alpha dc}
\end{eqnarray}
%where as before $\|\cdot\|_p$ denotes the usual $L^p$-norm on $L^p=L^p(\Omega,{\cal F},\pr)$, $p\in[1,\infty]$.

\begin{lemma}\label{convergence dependency coefficients}
Let $({\cal A}_{i})_{i\in\N\cup\{\infty\}}$ be an increasing sequence of sub-$\sigma$-algebras of $\cF$ satisfying ${\cal A}_{\infty} = \sigma(\bigcup_{i\in\N} {\cal A}_{i})$. Then the following assertions hold:
\begin{itemize}
    \item[(i)] $\phi({\cal A}_{i},X)\leq\phi({\cal A}_{i+1},X)$ holds for $i\in\N$, and $\lim_{i\to\infty}\phi({\cal A}_{i},X) = \phi({\cal A}_{\infty},X)$.
    \item[(ii)] $\alpha({\cal A}_{i},X)\leq\alpha({\cal A}_{i+1},X)$ holds for $i\in\N$, and $\lim_{i\to\infty}\alpha({\cal A}_{i},X) = \alpha({\cal A}_{\infty},X)$.
\end{itemize}
\end{lemma}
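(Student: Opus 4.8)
The plan is to fix the level $x\in\R$ and reduce both assertions to statements about the uniformly bounded martingale $i\mapsto\ex[\eins_{\{X\le x\}}\mid{\cal A}_i]$ associated with the filtration $({\cal A}_i)_{i\in\N}$. First I would abbreviate $g_x:=\eins_{\{X\le x\}}$, $p_x:=\pr[X\le x]$, and $D_i(x):=\ex[g_x\mid{\cal A}_i]-p_x$ for $i\in\N\cup\{\infty\}$, so that, by (\ref{phi dc})--(\ref{alpha dc}), $\phi({\cal A}_i,X)=\sup_{x}\|D_i(x)\|_\infty$ and $\alpha({\cal A}_i,X)=\sup_x\|D_i(x)\|_1$. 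Since $p_x$ is constant and $({\cal A}_i)$ is increasing, the tower property yields $D_i(x)=\ex[D_{i+1}(x)\mid{\cal A}_i]$ for $i\in\N$ and, more generally, $D_i(x)=\ex[D_\infty(x)\mid{\cal A}_i]$.

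For the two monotonicity statements I would use that conditional expectation is both an $L^1$- and an $L^\infty$-contraction. Indeed, conditional Jensen gives $|\ex[Y\mid{\cal A}]|\le\ex[|Y|\mid{\cal A}]$, whence $\|\ex[Y\mid{\cal A}]\|_1\le\|Y\|_1$, and since $|Y|\le\|Y\|_\infty$ $\pr$-a.s.\ also $\|\ex[Y\mid{\cal A}]\|_\infty\le\|Y\|_\infty$. Applying this to $Y=D_{i+1}(x)$ and ${\cal A}={\cal A}_i$ gives $\|D_i(x)\|_p\le\|D_{i+1}(x)\|_p$ for $p\in\{1,\infty\}$ and every $x$; taking the supremum over $x$ produces $\phi({\cal A}_i,X)\le\phi({\cal A}_{i+1},X)$ and $\alpha({\cal A}_i,X)\le\alpha({\cal A}_{i+1},X)$. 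In particular both sequences are nondecreasing and bounded (by $1$), so the limits $L':=\lim_i\phi({\cal A}_i,X)$ and $L:=\lim_i\alpha({\cal A}_i,X)$ exist, and monotonicity against ${\cal A}_\infty$ already gives the inequalities $L'\le\phi({\cal A}_\infty,X)$ and $L\le\alpha({\cal A}_\infty,X)$.

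The decisive point is the reverse inequality, for which I would invoke Lévy's upward martingale convergence theorem: because ${\cal A}_\infty=\sigma(\bigcup_i{\cal A}_i)$ and $g_x$ is bounded, $\ex[g_x\mid{\cal A}_i]\to\ex[g_x\mid{\cal A}_\infty]$ both $\pr$-a.s.\ and in $L^1$, so $D_i(x)\to D_\infty(x)$ $\pr$-a.s.\ and in $L^1$ for each fixed $x$. For $\alpha$ the $L^1$-convergence immediately gives $\|D_\infty(x)\|_1=\lim_i\|D_i(x)\|_1\le L$, and the supremum over $x$ yields $\alpha({\cal A}_\infty,X)\le L$, which completes assertion~(ii). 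The main obstacle is assertion~(i), since the $L^\infty$-norm is not continuous under $L^1$-convergence and there is no $L^\infty$-martingale convergence theorem; I would circumvent this by combining the a.s.\ convergence with the uniform bound $|D_i(x)|\le\|D_i(x)\|_\infty\le\phi({\cal A}_i,X)\le L'$, valid $\pr$-a.s.\ for every $i$. Intersecting the countably many full-measure sets (one for each $i$, together with the convergence set) gives $|D_\infty(x)|\le L'$ $\pr$-a.s., i.e.\ $\|D_\infty(x)\|_\infty\le L'$, and the supremum over $x$ finally yields $\phi({\cal A}_\infty,X)\le L'$, establishing assertion~(i). Thus the only subtle step is the passage to the limit in the $L^\infty$-coefficient, resolved by the uniform essential-supremum bound coming from the already-established monotone limit rather than by any convergence in $L^\infty$.
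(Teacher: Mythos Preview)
Your argument is correct, and it proceeds along a genuinely different line than the paper's. The paper first rewrites both coefficients via the Dedecker--Prieur representations over $h\in\mathrm{BV}_1$, namely $\phi({\cal A}_i,X)=\sup\{|\covi(Y,h(X))|:Y\ {\cal A}_i\text{-measurable},\ \|Y\|_1\le1,\ h\in\mathrm{BV}_1\}$ and $\alpha({\cal A}_i,X)=\sup_{h\in\mathrm{BV}_1}\|\ex[h(X)\mid{\cal A}_i]-\ex[h(X)]\|_1$, and then, for the delicate $\phi$-limit, approximates an ${\cal A}_\infty$-measurable $Y$ by $Y_i:=\ex[Y\mid{\cal A}_i]$ and uses uniform integrability of $(Y_i)$ to get $\covi(Y_i,h(X))\to\covi(Y,h(X))$. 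You instead stay with the indicator definition and exploit that the a.s.\ martingale limit $D_\infty(x)$ inherits the uniform essential bound $|D_i(x)|\le L'$, which sidesteps any need for $L^\infty$-convergence or the $\mathrm{BV}_1$ machinery. Your route is more self-contained and arguably cleaner for $\phi$; the paper's route has the advantage that the $\mathrm{BV}_1$ representations make monotonicity of $\phi$ immediate (the supremum is over a larger class of $Y$) and are already available from \cite{DedeckerPrieur2005}. For $\alpha$ the two arguments are essentially the same martingale $L^1$-convergence, just parameterized by $x$ versus $h\in\mathrm{BV}_1$.
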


\begin{proof}
Let us start by representations of the dependency coefficients established in \cite{DedeckerPrieur2005}. For this purpose let ${\rm BV}_{1}$ denote the space of all left continuous functions $f:\R\rightarrow\R$ with total variation bounded above by $1$. According to Lemmas 4 and 1 in \cite{DedeckerPrieur2005} we have
\begin{eqnarray}
    \phi({\cal A}_{i},X) & = & \sup\big\{|\covi(Y,h(X))|:\, Y\mbox{ is }{\cal A}_{i}\mbox{-measurable},\, \|Y\|_1\leq 1,\, h\in {\rm BV}_{1}\big\} \quad\label{phi representation}\\
    \alpha({\cal A}_{i},X) & = & \sup\big\{\|\ex\big[h(X)|{\cal A}_{i}\big] - \ex\big[h(X)\big]\|_1:\, h\in{\rm BV}_{1}\big\}
    \label{alpha representation}
\end{eqnarray}
for every $i\in\N\cup\{\infty\}$. We may observe immediately from (\ref{phi representation}) that
\begin{equation}\label{phi representation consequence}
    \phi({\cal A}_{i},X)\leq\phi({\cal A}_{i+1},X)\le\phi({\cal A}_{\infty},X)\quad\mbox{ for all $i\in\N$}.
\end{equation}
Furthmore, for any $h\in {\rm BV}_{1}$ and every $i\in\N$, we may observe
\begin{eqnarray*}
    \|\ex\big[h(X)|{\cal A}_{i}\big] - \ex\big[h(X)\big]\|_1
    & = & \ex\big[\big|\ex[(\ex[h(X)|{\cal A}_{i+1}] - \ex[h(X)])|{\cal A}_{i}]\big|\big]\\
    & \le & \ex\big[\ex[|\ex[h(X)|{\cal A}_{i+1}] - \ex[h(X)\big]|~|{\cal A}_{i}\big]\big]\\
    & = & \|\ex[h(X)|{\cal A}_{i+1}] - \ex[h(X)]\|_1.
\end{eqnarray*}
In view of (\ref{alpha representation}) this implies
\begin{equation}\label{alpha representation consequence}
    \alpha({\cal A}_{i},X)\leq\alpha({\cal A}_{i+1},X)\le\alpha({\cal A}_{\infty},X)\quad\mbox{ for all $i\in\N$}.
\end{equation}
For every fixed $h\in{\rm BV}_{1}$ we obtain by (\ref{alpha representation consequence}) and Theorem 10.5.1 in \cite{Dudley2002} (a version of Doob's martingale convergence theorem) that
$$
    \lim_{i\to\infty}\ex[h(X)|{\cal A}_{i}]=\ex[h(X)|{\cal A}_{\infty}]\qquad\pr\mbox{-a.s.}
$$
Since $h$ as an element of ${\rm BV}_1$ is bounded, it follows by the dominated convergence theorem that  %$\big(\ex\big[h(X)~|~{\cal A}_{i}\big]\big)$ which yields
$$
    \lim_{i\to\infty}\|\ex[h(X)|{\cal A}_{i}] - \ex[h(X)|{\cal A}_{\infty}]\|_1=\lim_{i\to\infty}\ex\big[|\ex[h(X)|{\cal A}_{i}]-\ex[h(X)|{\cal A}_{\infty}]|\big]=0.
$$
For arbitrary $\varepsilon > 0$ we may find by (\ref{alpha representation}) some $h\in {\rm BV}_{1}$ such that the inequality $\alpha({\cal A}_{\infty},X) - \varepsilon < \|\ex[h(X)|{\cal A}_{\infty}] - \ex[h(X)]\|_1$ holds. Then by (\ref{alpha representation consequence}) along with (\ref{alpha representation})
\begin{eqnarray*}
    \alpha({\cal A}_{\infty},X)
    & \geq &
    \limsup_{i\to\infty}\alpha({\cal A}_{i},X)\\
    &\geq&
    \liminf_{i\to\infty}\alpha({\cal A}_{i},X)\\
    &\geq&
    \liminf_{i\to\infty}\|\ex\big[h(X)|{\cal A}_{i}\big] - \ex\big[h(X)\big]\|_1\\
    &=&
    \|\ex\big[h(X)|{\cal A}_{\infty}\big] - \ex\big[h(X)\big]\|_1\\
    &\geq&
    \alpha({\cal A}_{\infty},X) - \varepsilon.
\end{eqnarray*}
Hence $\lim_{i\to\infty}\alpha({\cal A}_{i},X)=\alpha({\cal A}_{\infty},X)$.  This completes the proof of statement (b).

Now, in addition of $h\in{\rm BV_{1}}$ let us fix any $\pr|_{{\cal A}_{\infty}}$-integrable random variable $Y$ with $\|Y\|_1=\ex[|Y|]\leq 1$. Firstly, $Y_{i} := \ex[Y|{\cal A}_{i}]$ and $Z_{i}:= \ex[|Y|\,|{\cal A}_{i}]$ define martingales adapted to the filtered probability space $(\Omega,({\cal A}_{i})_{i\in\N\cup\{\infty\}},{\cal A}_{\infty},\pr|_{{\cal A_{\infty}}})$ and they satisfy $\|Y_i\|_1=\ex[|Y_{i}|]\le 1$ as well as $\|Z_i\|_1= \ex[|Z_{i}|]\leq 1$ for every $i\in\N\cup\{\infty\}$. Hence we may draw on Theorem 10.5.1 of \cite{Dudley2002} again to observe $\lim_{i\to\infty}Y_{i}=Y$ $\pr$-a.s.\ and $\lim_{i\to\infty}Z_{i}=Z_{\infty}$ $\pr$-a.s.  Furthermore, $\lim_{i\to\infty}\ex[Z_{i}]=\ex[|Y|]$, and any $Z_{i}$ is nonnegative. Therefore, $(Z_{i})$ is uniformly $\pr$-integrable which implies that $(Y_{i})$ is uniformly $\pr$-integrable because $|Y_{i}|\leq Z_{i}$ $\pr$-a.s.\ holds for every $i\in\N\cup\{\infty\}$. Thus $\lim_{i\to\infty}\ex[|Y_{i} - Y|] =\lim_{i\to\infty}\|Y_{i} - Y\|_1=0$. In particular, $\lim_{i\to\infty}\ex[Y_{i} h(X)]=\ex[Y h(X)]$ since $h(X)$ is a bounded random variable. In particular $\covi(Y_{i},h(X))\to \covi(Y,h(X))$. If we now take (\ref{phi representation}), we may verify $\phi({\cal A}_{i},X)\to\phi({\cal A}_{\infty},X)$ in a similar way as we established $\alpha({\cal A}_{i},X)\to\alpha({\cal A}_{\infty},X)$. This shows the full statement (a) and completes the proof.
\end{proof}

Now let $(X_{i})_{i\in\N}$ be any strictly stationary sequence on $(\Omega,{\cal F},\pr)$, and let $(Y_i)_{i\in\Z}$ be the strictly stationary extension of $(X_{i})_{i\in\N}$ as introduced above. In the following lemma we describe the mixing coefficients $\widetilde{\phi}(n)$  and $\widetilde{\alpha}(n)$ in terms of the dependence coefficients defined in (\ref{phi dc})--(\ref{alpha dc}) and the sequence $(Y_{i})_{i\in\Z}$. Let $\Z_-:=\{0,-1,-2,\ldots\}$.

\begin{lemma}\label{Alternativrepraesentation}
For $n\in\N$ we have
$$
    \widetilde{\phi}(n) = \phi\big(\sigma(\{Y_{i}: i\in\Z_{-}\}),Y_{n}\big)\quad\mbox{ and }\quad\widetilde{\alpha}(n) = \alpha\big(\sigma(\{Y_{i}: i\in\Z_{-}\}),Y_{n}\big).
$$
\end{lemma}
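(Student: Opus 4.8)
The plan is to reduce both claimed identities to Lemma \ref{convergence dependency coefficients} by exploiting the strict stationarity of the two-sided extension $(Y_i)_{i\in\Z}$. First I would record that, by the construction $Y_i:=X_{i+1}$ (for $i\in\N_0$), one has $X_i=Y_{i-1}$ for every $i\in\N$; hence for each fixed $k\in\N$,
$$
    {\cal F}_1^k=\sigma(X_1,\ldots,X_k)=\sigma(Y_0,Y_1,\ldots,Y_{k-1})\qquad\mbox{and}\qquad X_{n+k}=Y_{n+k-1}.
$$
Comparing the definitions of $\widetilde\phi(n)$ and $\widetilde\alpha(n)$ with those of $\phi({\cal A},X)$ and $\alpha({\cal A},X)$ in (\ref{phi dc})--(\ref{alpha dc}) then immediately yields
$$
    \widetilde\phi(n)=\sup_{k\in\N}\phi\big(\sigma(Y_0,\ldots,Y_{k-1}),Y_{n+k-1}\big),\qquad
    \widetilde\alpha(n)=\sup_{k\in\N}\alpha\big(\sigma(Y_0,\ldots,Y_{k-1}),Y_{n+k-1}\big).
$$

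The key step is a shift argument. For fixed $k$, the reindexing $i\mapsto i-(k-1)$ carries the random vector $(Y_0,\ldots,Y_{k-1},Y_{n+k-1})$ to $(Y_{-(k-1)},\ldots,Y_0,Y_n)$, and by strict stationarity of $(Y_i)_{i\in\Z}$ these two vectors share the same joint law. Since each coefficient $\phi({\cal A},X)$ and $\alpha({\cal A},X)$ is a functional of the joint distribution of $X$ together with its conditioning vector — the conditional law $\pr[X\in(-\infty,x]|{\cal A}](\cdot)$ is determined $\pr$-a.s.\ by that joint distribution, and both the essential supremum ($\|\cdot\|_\infty$) and the expectation ($\|\cdot\|_1$) of the resulting deviation depend only on its law — the shift leaves the coefficients unchanged. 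Writing ${\cal A}_k:=\sigma(Y_{-(k-1)},\ldots,Y_0)$, I therefore obtain
$$
    \widetilde\phi(n)=\sup_{k\in\N}\phi({\cal A}_k,Y_n),\qquad \widetilde\alpha(n)=\sup_{k\in\N}\alpha({\cal A}_k,Y_n).
$$

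Finally I would note that $({\cal A}_k)_{k\in\N}$ is an increasing sequence of sub-$\sigma$-algebras of ${\cal F}$ with $\sigma\big(\bigcup_{k\in\N}{\cal A}_k\big)=\sigma(\{Y_i:i\in\Z_-\})$. Lemma \ref{convergence dependency coefficients} then shows that $k\mapsto\phi({\cal A}_k,Y_n)$ and $k\mapsto\alpha({\cal A}_k,Y_n)$ are nondecreasing and converge to $\phi(\sigma(\{Y_i:i\in\Z_-\}),Y_n)$ and $\alpha(\sigma(\{Y_i:i\in\Z_-\}),Y_n)$, respectively. For a nondecreasing sequence the supremum coincides with the limit, and combining this with the previous display gives the two asserted identities.

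The one point requiring genuine care is the shift step: one must make precise that the conditional laws entering the coefficients are, modulo $\pr$-null sets, deterministic functions of the conditioning variables, so that they transform correctly under the measure-preserving reindexing supplied by stationarity, and that the $\|\cdot\|_\infty$- and $\|\cdot\|_1$-norms of the deviation are invariants of the law of the underlying random variable. Everything else is routine bookkeeping once Lemma \ref{convergence dependency coefficients} is available.
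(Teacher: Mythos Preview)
Your proof is correct and follows essentially the same route as the paper's: both use strict stationarity to identify the law of $(X_1,\ldots,X_k,X_{n+k})=(Y_0,\ldots,Y_{k-1},Y_{n+k-1})$ with that of $(Y_{-k+1},\ldots,Y_0,Y_n)$, deduce $\widetilde\phi(n)=\sup_{k}\phi({\cal A}_k,Y_n)$ and $\widetilde\alpha(n)=\sup_{k}\alpha({\cal A}_k,Y_n)$ for ${\cal A}_k=\sigma(Y_{-k+1},\ldots,Y_0)$, and then invoke Lemma~\ref{convergence dependency coefficients} on the increasing filtration $({\cal A}_k)$. Your explicit remark that the coefficients depend only on the joint law of the target and conditioning variables is the point the paper condenses into the single line $\pr_{Y_n|{\cal A}_k}=\pr_{X_{n+k}|{\cal F}_1^k}$.
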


\begin{proof}
Set ${\cal A}_{k} := \sigma(Y_{-k + 1},\ldots,Y_{0})$ for $k\in\N$, and ${\cal A}_{\infty}:= \sigma(\{Y_{i}: i\in\Z_{-}\})$. By strict stationarity of $(Y_{i})_{i\in\Z}$, the random vector $(Y_{-k + 1},\ldots,Y_{0},Y_{n})$ has the same distribution as $(Y_{0},\ldots,Y_{k-1},Y_{n + k - 1}) = (X_{1},\ldots,X_{k},X_{n+k})$ for every $k,n\in\N$. In particular, for every $k,n\in\N$, the random variables $Y_{n}$ and $X_{n + k}$ are identically distributed and $\pr_{Y_{n}|{\cal A}_{k}} = \pr_{X_{n + k}|\cF_{1}^{k}}$. Hence we may observe
$$
    \widetilde{\phi}(n) = \sup_{k\in\N}\phi({\cal A}_{k},Y_{n})\quad\mbox{ and }\quad\widetilde{\alpha}(n) = \sup_{k\in\N}\alpha({\cal A}_{k},Y_{n})\quad\mbox{ for every }k,n\in\N.
$$
Finally, ${\cal A}_{k}\subseteq{\cal A}_{k + 1}$ holds for $k\in\N$, and ${\cal A}_{\infty}$ is generated by $\bigcup_{k=1}^{\infty}{\cal A}_{k}$. Then the statement of Lemma \ref{Alternativrepraesentation} follows immediately from Lemma \ref{convergence dependency coefficients}.
\end{proof}

\begin{lemma}\label{ergodic preservation}
The sequence $(Y_{i})_{i\in\Z}$ is ergodic if the sequence $(X_{i})_{i\in\N}$ is ergodic.
\end{lemma}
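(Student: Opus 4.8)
The plan is to pass to the canonical sequence-space representation and to exploit that $(Y_i)_{i\in\Z}$ is precisely the two-sided (natural) extension of the one-sided shift carrying $(Y_i)_{i\in\N_0}=(X_{i+1})_{i\ge 0}$. Write $\mu$ for the law of $(Y_i)_{i\in\Z}$ on $\R^{\Z}$ and let $T:\R^{\Z}\to\R^{\Z}$ be the invertible left shift; write $\nu$ for the law of $(Y_i)_{i\in\N_0}$ on $\R^{\N_0}$ and let $S:\R^{\N_0}\to\R^{\N_0}$ be the one-sided shift. The coordinate projection $\pi:\R^{\Z}\to\R^{\N_0}$ onto the nonnegative coordinates then satisfies $\pi_{\ast}\mu=\nu$ and $\pi\circ T=S\circ\pi$ (the second being exactly the defining property of the extension produced by Lemma 9.2 of \cite{Kallenberg1997}, namely that the nonnegative marginal of $\mu$ is $\nu$). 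Iterating gives $\pi\circ T^{n}=S^{n}\circ\pi$, hence $T^{-n}\pi^{-1}=\pi^{-1}S^{-n}$, and by hypothesis $S$ is ergodic for $\nu$.

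I would use the standard Cesàro characterization of ergodicity (cf.\ \cite{Breiman}): a strictly stationary system is ergodic if and only if $\frac{1}{N}\sum_{n=0}^{N-1}\mu(T^{-n}A\cap B)\to\mu(A)\mu(B)$ for all measurable $A,B$. It suffices to verify this on an algebra generating the $\sigma$-algebra, since replacing $A,B$ by approximants $A',B'$ changes $\mu(T^{-n}A\cap B)$ by at most $\mu(A\triangle A')+\mu(B\triangle B')$ uniformly in $n$; and for the conclusion one takes an invariant set $A$ with $B=A$ to obtain $\mu(A)=\mu(A)^2$. Thus it is enough to establish the Cesàro condition for cylinder sets $A,B$.

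The key computation reduces the two-sided correlations to one-sided ones. Given cylinders $A,B$ both depending only on coordinates in $[-m,m]$, invertibility of $T$ and $T$-invariance of $\mu$ give $\mu(T^{-n}A\cap B)=\mu(T^{-(n+m)}A\cap T^{-m}B)$, where now $T^{-m}B$ and (for $n\ge 0$) $T^{-(n+m)}A$ depend only on nonnegative coordinates. Writing $T^{-m}A=\pi^{-1}(\alpha)$ and $T^{-m}B=\pi^{-1}(\beta)$ for cylinders $\alpha,\beta$ on $\R^{\N_0}$, the intertwining yields $T^{-(n+m)}A=\pi^{-1}(S^{-n}\alpha)$, so that $\mu(T^{-n}A\cap B)=\mu\big(\pi^{-1}(S^{-n}\alpha\cap\beta)\big)=\nu(S^{-n}\alpha\cap\beta)$. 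Ergodicity of $S$ then gives $\frac{1}{N}\sum_{n=0}^{N-1}\nu(S^{-n}\alpha\cap\beta)\to\nu(\alpha)\nu(\beta)$, and since $\nu(\alpha)=\mu(T^{-m}A)=\mu(A)$ and $\nu(\beta)=\mu(B)$, this is exactly the required limit $\mu(A)\mu(B)$.

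The main obstacle is conceptual rather than computational: because the one-sided shift $S$ is \emph{not} invertible, a $T$-invariant set or function cannot be transported to the one-sided factor directly, and the whole argument hinges on using invertibility of $T$ to translate every finitely supported cylinder into the nonnegative-coordinate block where $\pi$ and the relation $\pi\circ T=S\circ\pi$ apply. The only care required is the bookkeeping of coordinate supports under $T^{-m}$ and the verification of $\pi_{\ast}\mu=\nu$ together with the intertwining for the specific extension at hand. An alternative route would linearize the problem via invariant $L^2$-functions, using Doob's martingale convergence along $\mathcal{B}_{\ge -n}=T^{-n}\mathcal{B}_{\ge 0}\uparrow\mathcal{F}$ and the factor map, but the Cesàro reduction above is cleaner and avoids separate integrability arguments, so I would adopt it.
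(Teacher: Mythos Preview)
Your argument is correct, but it follows a genuinely different route from the paper's. The paper invokes the characterization of ergodic shift-invariant measures as the extreme points of the convex set $\mathcal{M}_1(S_I)$ (Theorem~9.12 in \cite{Kallenberg1997}) and argues by contradiction: a nontrivial convex decomposition $\pr_{(Y_i)_{i\in\Z}}=\lambda\mu+(1-\lambda)\nu$ would push forward under the projection $\Pi_{\N}$ to a nontrivial decomposition of $\pr_{(X_i)}$, contradicting its extremality; then shift-invariance forces $\mu$ and $\nu$ to share all finite-dimensional marginals, hence to coincide. Your approach instead uses the Ces\`aro (weak-mixing-in-mean) characterisation of ergodicity, reduces to cylinders via the standard approximation-in-measure argument, and then translates the two-sided correlations into one-sided ones by shifting cylinder supports into the nonnegative block where the intertwining $\pi\circ T=S\circ\pi$ applies. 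The paper's proof is shorter and more structural, leaning on the convexity/extreme-point machinery; yours is more elementary in that it avoids that machinery and works directly at the level of measure-preserving dynamics, at the cost of having to spell out the cylinder approximation and the bookkeeping of coordinate supports. Both are standard and rigorous; the only care point in your write-up is to note explicitly that finite-dimensional cylinders form a generating algebra on which the uniform-in-$n$ approximation bound $\mu(A\triangle A')+\mu(B\triangle B')$ indeed suffices, which you do mention.
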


\begin{proof}
For $I=\Z$ or $I=\N$, denote by $\cB(\R^{I})$ the standard Borel $\sigma$-algebra on $\R^{I}$ (generated by the standard product topology on $\R^{I}$) and let
$$
    S_{I}:\R^{I}\longrightarrow\R^{I},\qquad (x_{i})_{i\in I}\longmapsto (x_{i + 1})_{i\in I}
$$
be the (one-step) shift operator. Furthermore let ${\cal M}_{1}(S_{I})$ be the set of all probability measures $\mu$ on $\cB(\R^{I})$ satisfying $\mu = \mu\circ S_{I}^{-1}$, that is, the set of all probability measures $\mu$ on $\cB(\R^{I})$ under which the shift operator $S_I$ is measure-preserving. Recall that $\mu\in{\cal M}(S_I)$ is said to be {\em ($S_I$-) ergodic} if the corresponding invariant $\sigma$-algebra ${\cal I}$ (i.e.\ the set of all $A\in{\cal B}(\R^I)$ with $A=S_I^{-1}(A)$) is trivial (i.e.\ $\mu[A]\in\{0,1\}$ for all $A\in{\cal I}$). It is known that
\begin{equation}\label{ergodic measures as extreme points}
    \big\{\mu\in\cM_{1}(S_{I}):\, \mu\mbox{ is }S_{I}\mbox{-ergodic}\big\} = \big\{\mu\in\cM_{1}(S_{I}):\,\mu\mbox{ is extreme point of }~\cM_{1}(S_{I})\big\};
\end{equation}
see, for instance, Theorem 9.12 of \cite{Kallenberg1997}. The sequence $(Y_i)_{i\in\Z}$ is strictly stationary and so its distribution $\pr_{(Y_{i})_{i\in\Z}}$ belongs to $\cM_{1}(S_{\Z})$. Thus it suffices to show that $\pr_{(Y_{i})_{i\in\Z}}$ is an extreme point of $\cM_{1}(S_{\Z})$. This will be done by way of contradiction.

Suppose that $\pr_{(Y_{i})_{i\in\Z}}$ is not an extreme point of $\cM_{1}(S_{\Z})$. Then there exist different $\mu,\nu\in\cM_{1}(S_{\Z})$ and some $\lambda\in (0,1)$ such that $\pr_{(Y_{i})_{i\in\Z}} = \lambda\mu + (1-\lambda)\nu$. For the Borel-measurable mapping
$$
    \Pi_{\N}:\R^{\Z}\longrightarrow\R^{\N},\qquad (x_{i})_{i\in\Z}\longmapsto (x_{i})_{i\in\N}
$$
we have $S_{\N}\circ \Pi_{\N} = \Pi_{\N}\circ S_{\Z}$. Thus $(\widetilde{\mu}\circ\Pi_{\N}^{-1})\circ S_\N^{-1}=(\widetilde\mu\circ S_\Z^{-1})\circ\Pi_\N^{-1}=\widetilde\mu\circ\Pi_\N^{-1}$ for every $\widetilde{\mu}\in\cM_{1}(S_{\Z})$, and so $\widetilde{\mu}\circ\Pi_{\N}^{-1}\in\cM_{1}(S_{\N})$ for every $\widetilde{\mu}\in\cM_{1}(S_{\Z})$. Further, we clearly have $\pr_{(Y_{i})_{i\in\Z}}\circ\Pi_{\N}^{-1} = \pr_{(X_{i})}\circ S_{\N}^{-1}$ for the distribution $\pr_{(X_{i})}$ of $(X_{i})$, and by the stationarity of $(X_i)$ we also have $\pr_{(X_{i})}\in\cM_{1}(S_{\N})$, i.e.\ $\pr_{(X_{i})}\circ S_\N^{-1}=\pr_{(X_i)}$. Thus $\pr_{(Y_{i})_{i\in\Z}}\circ\Pi_{\N}^{-1}=\pr_{(X_{i})}$. In particular, $\pr_{(Y_{i})_{i\in\Z}}\circ\Pi_{\N}^{-1} = \lambda\, \mu\circ \Pi_{\N}^{-1} + (1 - \lambda)\, \nu\circ \Pi_{\N}^{-1}$. Since $\pr_{(Y_{i})_{i\in\Z}}\circ\Pi_{\N}^{-1}$ ($=\pr_{(X_i)}$) is an extreme point of $\cM_{1}(S_{\N})$ by assumption and (\ref{ergodic measures as extreme points}), it follows that $\mu\circ \Pi_{\N}^{-1} = \nu\circ \Pi_{\N}^{-1}$. Since $\mu$ and $\nu$ belong to $\cM_{1}(S_{\Z})$, we may conclude that $\mu$ and $\nu$ have identical finite-dimensional marginal distributions. In particular $\mu = \nu$, which contradicts the assumption $\mu\not=\nu$.
\end{proof}

%%%%%%%%%%%%%%%%%%%%%%%%%%%%%%%%%%%%%%%%%%%%%%%%%%%%%%%%%%%%%%%%
%%%%%%%%%%%%%%%%%%%%%%%%%%%%%%%%%%%%%%%%%%%%%%%%%%%%%%%%%%%%%%%%
%%%%%%%%%%%%%%%%%%%%%%%%%%%%%%%%%%%%%%%%%%%%%%%%%%%%%%%%%%%%%%%%

%%%%%%%%%%%%%%%%%%%%%%%%%%%%%%%%%%%%%%%%%%%%%%%%%%%%%%%%%%%%%%%%
%%%%%%%%%%%%%%%%%%%%%%%%%%%%%%%%%%%%%%%%%%%%%%%%%%%%%%%%%%%%%%%%
%%%%%%%%%%%%%%%%%%%%%%%%%%%%%%%%%%%%%%%%%%%%%%%%%%%%%%%%%%%%%%%%

\end{document}